\documentclass[10pt]{amsart}

\usepackage{amssymb,amsmath}
\usepackage{bbm}
\usepackage{amsthm}
\usepackage{enumerate}
\usepackage{comment}
\usepackage{mathtools}
\usepackage{xcolor}
\usepackage[title]{appendix}

\usepackage{todonotes}
\usepackage{amsaddr}

\pagestyle{plain}
\mathtoolsset{showonlyrefs}
\setlength{\parindent}{0pt}

\newcommand{\R}{\mathbb{R}}
\newcommand{\C}{\mathbb{C}}
\newcommand{\Z}{\mathbb{Z}}
\newcommand{\N}{\mathbb{N}}

\newcommand{\torus}{\mathbb{T}}

\newcommand{\T}{\mathbb{T}}
\newcommand{\G}{\mathcal{G}}
\newcommand{\Sp}{\mathcal{S}}
\newcommand{\F}{\mathcal{F}}
\newcommand{\mi}{\mathrm{i}}

\newcommand{\dd}{\partial}
\newcommand{\dbar}{\bar{\partial}}

\newcommand{\graphg}{\mathfrak{G}}
\newcommand{\graphv}{\mathfrak{V}}
\newcommand{\graphe}{\mathfrak{E}}

\newcommand{\etal}{\textit{et al}.}

\DeclareMathOperator{\vol}{vol}
\DeclareMathOperator{\dg}{deg}
\DeclareMathOperator{\len}{len}

\newtheorem{thm}{Theorem}[section]
\newtheorem{lem}[thm]{Lemma}
\newtheorem{cor}[thm]{Corollary}
\newtheorem{prop}[thm]{Proposition}

\newtheorem{thmlet}{Theorem}

\theoremstyle{definition}
\newtheorem{defn}[thm]{Definition}
\newtheorem{ex}[thm]{Example}

\theoremstyle{remark}
\newtheorem{rem}[thm]{Remark}

\numberwithin{equation}{section}







\usepackage[style=numeric, backend=biber, sorting=nyt,doi=false, isbn=false, eprint=false,  url=false,giveninits=true]{biblatex}
\addbibresource{refs.bib}

\begin{document}

\title{$L^2$-stability analysis for Gabor phase retrieval}


\author{Philipp Grohs}
\address{Faculty of Mathematics, University of Vienna, Oskar Morgenstern Platz 1, 1090 Vienna\\ Austria and Research Network DataScience@UniVie, University of Vienna, Kolingasse 14-16, 1090 Vienna, Austria\\
Johann Radon Institute for Computational and Applied Mathematics,
Austrian Academy of Sciences, Altenbergerstrasse 69, 4040 Linz, Austria.}
\email{philipp.grohs@univie.ac.at}
\author{Martin Rathmair}
\address{Faculty of Mathematics, University of Vienna, Oskar Morgenstern Platz 1, 1090 Vienna, Austria}
\email{martin.rathmair@univie.ac.at}




\date{\today}

\begin{abstract}
We consider the problem of reconstructing the missing phase information from spectrogram data $|\mathcal{G} f|,$
with 
$$
\mathcal{G}f(x,y)=\int_\mathbb{R} f(t) e^{-\pi(t-x)^2}e^{-2\pi i t y}dt,
$$ 
the Gabor transform of a signal $f\in L^2(\mathbb{R})$.
More specifically, we are interested in domains $\Omega\subseteq \mathbb{R}^2$, which allow for stable local reconstruction, that is 
$$
|\mathcal{G}g| \approx |\mathcal{G}f| \quad \text{in} ~\Omega
\quad\Longrightarrow \quad 
\exists \tau\in\mathbb{T}:\quad
\mathcal{G}g \approx \tau\mathcal{G}f \quad \text{in} ~\Omega.
$$
In recent work [P. Grohs and M. Rathmair. Stable Gabor Phase Retrieval and Spectral Clustering. Comm. Pure Appl. Math. (2019)] and 
[P. Grohs and M. Rathmair. Stable Gabor phase retrieval for multivariate functions. J. Eur. Math. Soc. (2021)]
we established a characterization of the stability of this phase retrieval problem in terms of the connectedness of the observed measurements. 
The main downside of the aforementioned results is that the similarity of two spectrograms is measured w.r.t. a first order weighted Sobolev norm. In this article we remove this flaw and essentially show that the Sobolev norm may be replaced by the $L^2-$norm.
Using this result allows us to show that it suffices to sample the spectrogram on suitable discrete sampling sets -- a property of crucial importance for practical applications.
\end{abstract}

\maketitle

\section{Introduction}
Phase retrieval refers to the general problem of determining a signal from phase-less measurements. 
Oftentimes, the operator describing these measurements is linear and nicely invertible. However, due to the absence of phase information one has to deal with a nonlinear inverse problem, which is typically significantly harder to analyze.\\
Perhaps the most prominent example is the \emph{Fourier phase retrieval} problem, where one asks to determine a function $f\in L^2(\mathbb{R}^d)$ (up to a unimodular factor) from 
the modulus of its Fourier transform, that is from $\big|\hat{f}\big|$.
This question is obviously badly ill-posed, as one can attach any arbitrary phase information to $|\hat{f}|$. 
Even, if the same question is posed on the space $L^2[-1,1]$ of compactly supported functions there still remains a vast amount of amibiguity \cite{MR80802,1053648,doi:10.1080/713817747}.
Possible strategies to overcome these issues of non-uniqueness include incorporating a-priori information on the signals under consideration (e.g., $f$ is compactly supported and non-negative) and/or 
changing the measurement setup in order to introduce redundancy (e.g., replace the Fourier transform by the short-time Fourier transform).\\
Phase retrieval problems are encountered in a number of highly important applications in various fields such as astronomy \cite{dainty}, diffraction imaging \cite{7078985}, audio \cite{flanagan} to mention just a few.
For more details we refer to our recent survey article \cite{MR4094471}.

\subsection{Problem formulation and related results}
The object which is in the spot light of this article is the short time Fourier transform with Gaussian window as introduced next.
\begin{defn}
 The \emph{Gabor transform} of $f\in L^2(\R)$ is defined by 
 \begin{equation}
  \G f(x,y):=\left(f(\cdot) e^{-\pi(\cdot - x)^2} \right)^\wedge(y),\quad x,y\in\R.
 \end{equation}
Moreover, we denote the \emph{spectrogram} by $\Sp f:=|\G f|^2$.
\end{defn}
As it is well known, the Gabor transform essentially produces entire functions. 
Its holomorphicity makes $\G f$ a rather alluring object to study, as one can leverage this strong structural feature and draw upon tools from complex function theory  to analyze it.
For example, it follows from the open mapping theorem that $f\in L^2(\R)$ is uniquely determined (up to a unimodular constant) by the phase-less information of its spectrogram $\Sp f$ on any open set which can be arbitrarily small.
The prior information that the function to be reconstructed is holomorphic has proved to be fruitful in a number of phase retrieval problems. 
To name just a few we mention \cite{MR80802,1053648,doi:10.1080/713817747,MR3256781,MR3421917}.\\

When dealing with real world measurement scenarios stability is absolutely essential, due to the occurrence of measurement errors, imprecision of machine arithmetic or imperfect modelling.
In the context of phase retrieval, stability is typically formalized as some sort of Lipschitz estimate.
For the sake of concreteness, in our case
\begin{equation}
    \min_{\tau \in \T} \|g-\tau f\|_{L^2(\R)}\le C \||\G g| -|\G f|\|_{L^2(\R^2)},\footnote{We stress that the choice of norms here are completely arbitrary and any other pair of similarity measures would make just as much sense.}
\end{equation}
with $C$ an absolute constant independent from $f,g\in L^2(\R)$ would express uniform stability, as any pair of functions possessing similar spectrograms must be similar themselves.
Unfortunately, the situation is more complicated than that. By choosing two (or more) components $f_1,f_2$ which have their respective Gabor transforms concentrated on disjoint sets in the plane, meaning that their product $\G f_1 \cdot \G f_2$ essentially vanishes, one routinely produces instabilities by setting $f=f_1+f_2$ and $g=f_1-f_2$ which have very similar spectrograms while $f$ and $g$ are not similar at all in the above sense \cite{MR3989716}. 
This phenomenon is intrinsically manifested in any phase retrieval problem in an infinite dimensional setting. In particular, there is no uniform stability in infinite dimensions \cite{MR3554699,MR3656501}.\\

To take this into account, one could adjust the formulation and ask for local stability in the sense that given a domain $\Omega\subseteq\R^2$ and a fixed signal $f\in L^2(\R)$, what can be said about the optimal constant satisfying
\begin{equation}
    \min_{\tau\in\T}\|\G g- \tau \G f\|_{L^2(\Omega)}\le C(f,\Omega) \||\G g| - |\G f|\|_{L^2(\Omega)},\quad \forall g\in L^2(\R)?
\end{equation}
This notion is local in a twofold meaning. Firstly, local in signal $f$ under consideration, and secondly local in the domain $\Omega$ where the phase information is to be recovered.\\

In previous work \cite{MR3935477,jemsgrohsrathmair} we established that in the case of Gabor phase retrieval there is no other type of instability than the multi-component type as described above.
\begin{thm}[\cite{MR3935477,jemsgrohsrathmair}, informal]\label{thm:stabilitypriorwork}
Let $\Omega\subseteq \R^2$ be a domain and $f\in L^2(\R)$.
Then it holds for all $g\in L^2(\R)$ that 
\begin{equation}\label{eq:stabilityprior}
    \min_{\tau\in\torus} \|\G g-\tau \G f\|_{L^1(\Omega)} \lesssim h(f,\Omega)^{-1} \cdot \||\G g|-|\G f|\|_{\mathcal{W}(\Omega)},
\end{equation}
where $\|\cdot\|_{\mathcal{W}(\Omega)}$ denotes a first order Sobolev norm with a polynomial weight, and where $h(f,\Omega)$ is the \emph{Cheeger constant}
defined by
\begin{equation}
    h(f,\Omega):= \inf_{E\subseteq \Omega} \frac{ \int\limits_{\Omega\cap \partial E} |\G f| }{\min\left\{\int_E |\G f|, \int_{\Omega\setminus E} |\G f| \right\}}
\end{equation}
\end{thm}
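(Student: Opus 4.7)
The plan is to exploit the (anti-)holomorphic structure of the Gabor transform and reduce the problem to a weighted isoperimetric inequality on $\Omega$.

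\textbf{Step 1 (Bargmann factorization).} I would first write $\G f(x,y)=e^{-\pi(x^2+y^2)/2}e^{-\pi i xy}F(x+iy)$ with $F$ entire (Bargmann representation), and similarly $\G g = e^{-\pi(x^2+y^2)/2}e^{-\pi ixy}G(x+iy)$. Where $\G f\neq 0$, the quotient $\G g/\G f = G/F$ is meromorphic, so $\log(G/F)=\log|G/F|+i\varphi$ is locally holomorphic, where $\varphi := \arg(\G g/\G f)$ is the (local, possibly multivalued) phase mismatch. Crucially, the Gaussian and chirp prefactors cancel in the ratio, giving the identity $\log|\G g|-\log|\G f|=\log|G/F|$, which is the \emph{harmonic conjugate} of $\varphi$. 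Hence pointwise (off the zero set of $\G f\G g$)
\begin{equation}
|\nabla \varphi| = |\nabla(\log|\G g|-\log|\G f|)| \le \frac{|\nabla(|\G g|-|\G f|)|}{\min(|\G f|,|\G g|)}+\text{lower order}.
\end{equation}

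\textbf{Step 2 (Split magnitude vs.\ phase).} Next I would bound $|\G g-\tau \G f|\le |\G f|\cdot\bigl(\bigl||G/F|-1\bigr|+|e^{i\varphi}-\tau|\bigr)$. The magnitude part is straightforward: $\bigl||\G g|-|\G f|\bigr|$ directly controls it on the set where $|\G f|$ is comparable to $|\G g|$, and is weighted down where $|\G f|$ is small. The phase part reduces to controlling $\min_{\theta}\int_\Omega |\varphi-\theta|\,|\G f|\,dx\,dy$.

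\textbf{Step 3 (Weighted Cheeger inequality).} This is the heart of the argument. I would establish the $L^1$ Poincaré-type inequality
\begin{equation}
\inf_{c\in\R}\int_\Omega |\varphi-c|\,|\G f|\,dx\,dy \;\lesssim\; h(f,\Omega)^{-1}\int_\Omega |\nabla \varphi|\,|\G f|\,dx\,dy,
\end{equation}
by combining the coarea formula applied to level sets $\{\varphi>t\}$ with the very definition of $h(f,\Omega)$, which yields $\int_{\Omega\cap\partial\{\varphi>t\}}|\G f|\ge h(f,\Omega)\min\{\int_{\{\varphi>t\}\cap\Omega}|\G f|,\int_{\{\varphi\le t\}\cap\Omega}|\G f|\}$ for every $t$, and then optimizing $c$ as a median. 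Combining this with Step 1 converts the phase deviation into an $L^1$ norm of $\nabla(\log|\G g|-\log|\G f|)$, which after weighting by $|\G f|$ is exactly an object controllable by the weighted Sobolev norm $\|\cdot\|_{\mathcal{W}(\Omega)}$ appearing in the theorem.

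\textbf{Expected obstacle.} The principal difficulty will be handling the zeros of $\G f$ (and of $\G g$), where $\log|\G f|$ is singular and the harmonic conjugate identity breaks down. Although these zeros form a discrete set and the weight $|\G f|$ suppresses them naturally, making the computation rigorous requires a careful cutoff/regularization (e.g.\ excising small discs around zeros and passing to the limit), together with justifying that the multivaluedness of $\varphi$ caused by winding around zeros does not create an essential obstruction---one must show that the winding contributions vanish on the relevant connected pieces of the super-level sets implicitly isolated by the Cheeger constant. Once this and the weighted Cheeger inequality are in place, the remainder is algebraic bookkeeping to arrive at the stated estimate.
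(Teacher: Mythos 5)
This theorem is not proved in the present manuscript at all: it is an \emph{informal} recollection of the main results of the cited prior works \cite{MR3935477,jemsgrohsrathmair}, and the paper's own contribution (Theorems \ref{thm:specialcasebasicgabor} and \ref{thm:b}) deliberately takes a completely different route (tensor reconstruction on squares plus graph stitching) precisely to avoid the Sobolev norm on the right-hand side of \eqref{eq:stabilityprior}. That said, your outline is essentially the strategy of the cited proofs: Bargmann factorization, the meromorphic quotient $G/F$, Cauchy--Riemann to trade the phase gradient for the gradient of the log-modulus, and a weighted $L^1$ Cheeger--Poincar\'e inequality proved by the coarea formula with a median choice of constant. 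The polynomial weight in $\mathcal{W}(\Omega)$ is exactly the price of the ``lower order'' term you mention in Step 1 (it absorbs factors like $|\nabla \G f|/|\G f|$ via Fock-space growth estimates of the form $|\nabla \G f(z)|\lesssim (1+|z|)\,\sup|\G f|$), so that term is not negligible bookkeeping.

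The one genuine gap is the object to which you apply the Poincar\'e inequality in Step 3. You apply it to $\varphi=\arg(\G g/\G f)$, but whenever $G/F$ has a zero or pole in $\Omega$ (the generic situation), $\varphi$ is not a single-valued real function on any neighbourhood of that point, and the coarea/median argument genuinely requires a single-valued function: the super-level sets $\{\varphi>t\}$ are not well defined, and no cutoff around the zeros repairs this, because the $2\pi$-winding is a topological obstruction, not a removable singularity. The cited proofs circumvent this by applying the weighted Poincar\'e inequality directly to the single-valued complex-valued quotient $\eta=\G g/\G f=G/F$ itself, estimating $\inf_{c\in\C}\int_\Omega|\eta-c|\,|\G f|$ by $h(f,\Omega)^{-1}\int_\Omega|\nabla\eta|\,|\G f|$ (the coarea argument applies separately to $\Re\eta$ and $\Im\eta$), and then using $|\nabla\eta|\,|F|=\sqrt{2}\,|G'F-GF'|/|F|$ together with the growth estimates to bound everything by first-order derivatives of $|\G g|-|\G f|$ with polynomial weight; a final elementary lemma converts $\inf_{c\in\C}$ back to $\min_{\tau\in\T}$. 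You would also need to verify integrability of the resulting expressions across the zero set of $F$, which is where the specific choice of weight and the Fock-space bounds are again essential. With that substitution your plan matches the cited argument; as written, Step 3 does not go through.
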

The Cheeger constant measures connectedness in the transform domain. Morally, $h(f,\Omega)$ is small if and only if
there exists a partition of $\Omega=E \cup (\Omega\setminus E)$ such that the overall energy is evenly distributed between the two components, and furthermore $|\G f|$ is fairly small on the separating boundary.
Whereas, if $f$ is single-component -- which loosely speaking means it is not multi-component -- the ratio appearing in the Cheeger constant cannot become too small, and hence its reciprocal not too large.\\
Recently Cheng \etal \cite{cheng2020stable} have considered a rather abstract and general setting for a phase retrieval problem, under the assumption that the global task can be broken up into a bunch of uniformly stable sub problems.
To such a scheme is naturally associated a graph whose vertices correspond to the individual sub problems. The weighting on the edges indicates to what extent the sub problems are connected to each other.
Their main result reveals a connection between the stability of the original global problem in terms of the connectivity of the associated graph, i.e. its graph Cheeger constant.

\subsection{This paper}
The work presented in this paper has been driven by the desire to improve upon Theorem \ref{thm:stabilitypriorwork}.
While characterizing the stability of the phase reconstruction process by the connectivity of the measurements is a very satisfactory result the choice of the norm on the right hand side in \eqref{eq:stabilityprior} is unpleasant.
In order to guarantee stable recovery, one would need to be able to record $|\G f|$ and its derivatives on a continuum $\Omega.$\\
It is the main objective of this paper to develop new proof techniques in order to alleviate the norm on the right hand side and therefore obtain stronger stability results for Gabor phase retrieval.
The underlying strategy is very much inspired by the work of Cheng \etal \cite{cheng2020stable} as our approach as well hinges on  identifying local sub problems whose stability we can understand and then stitch these together in order to obtain more general stability statements. 

\subsubsection{Unit square as the base case}
Initially, we will lay our focus on studying stability on the arguably simplest domain one can think of, a square of side length $1$.
For this case, we will establish the following stability result in Section \ref{sec:stabilitysquare}.
\begin{thmlet}\label{thm:specialcasebasicgabor}
 Let $Q\subseteq\R^2$ be a square of side length $1$. Then it holds for all $f,g\in L^2(\R)$ that 
 \begin{equation}\label{eq:specialcasebasicgabor}
  \min_{\tau\in\torus} \|\G g- \tau \G f\|_{L^2(Q)} \lesssim \left(\frac{\|\Sp f\|_{L^\infty(\R^2)} + \|\Sp g\|_{L^\infty(\R^2)}}{\|\Sp f\|_{L^1(Q)}}\right)^{1/2}\cdot \|\Sp f -\Sp g\|_{L^2(Q)}^{1/2}.
 \end{equation}
\end{thmlet}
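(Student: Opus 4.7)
I would pick the optimal unimodular factor $\tau = e^{i\alpha}$ with $\alpha := \arg \int_Q \overline{\G f}\,\G g\,dA$, so that
$$|\G g - \tau \G f|^2 = (|\G g|-|\G f|)^2 + 2|\G f||\G g|\bigl(1-\cos(\phi-\alpha)\bigr) \quad \text{a.e.},$$
with $\phi := \arg \G g - \arg \G f$ the pointwise phase difference. The amplitude term is handled directly: the elementary inequality $(|\G g|-|\G f|)^2 \le |\Sp g - \Sp f|$ together with Cauchy--Schwarz on the unit square gives
$$\||\G g|-|\G f|\|_{L^2(Q)}^2 \le \|\Sp g - \Sp f\|_{L^1(Q)} \le \|\Sp f - \Sp g\|_{L^2(Q)},$$
which already exhibits the desired $\|\Sp f-\Sp g\|_{L^2}^{1/2}$ scaling (without even needing the $L^\infty$/$L^1$-ratio).

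The phase term is where the real work lies. By the Bargmann--Fock identification, there exist entire functions $F,G$ (depending on $f,g$) with $|\G f|^2 = |F|^2 e^{-\pi|z|^2}$ and $|\G g|^2 = |G|^2 e^{-\pi|z|^2}$ in a natural complex coordinate $z = x + iy$, with a common accompanying Gaussian phase that cancels in $\phi$. Hence $\phi$ is the imaginary part of (a local branch of) $\log(G/F)$, whose real part is $\tfrac12 \log(\Sp g/\Sp f)$, and on the complement of the zero sets of $F,G$ the Cauchy--Riemann equations give the pointwise identity $|\nabla \phi| = |\nabla \log(|\G g|/|\G f|)|$. I would then apply a weighted Poincar\'e inequality on $Q$ with weight $w = |\G f||\G g|$ to obtain $\int_Q w(\phi-\alpha)^2 \lesssim \int_Q w |\nabla \phi|^2$, and combine it with a Caccioppoli-type interior estimate for the harmonic function $\log|G/F|$ to remove the gradient on the right-hand side in favor of an $L^2$-norm of $\log|G/F|$ itself. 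The algebraic identity $\bigl||\G g|-|\G f|\bigr|\cdot(|\G g|+|\G f|) = |\Sp g-\Sp f|$ and the pointwise bound $|\G f|+|\G g| \le 2\|\Sp\|_{L^\infty}^{1/2}$ then convert this into the claimed right-hand side, with the factor $\|\Sp\|_{L^\infty}^{1/2}$ absorbed naturally.

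The main obstacle is the handling of zeros of $F$ and $G$, at which $\log|G/F|$ is singular and the harmonic-conjugate argument breaks down. My plan is to localize: split $Q$ into a \emph{good} region where $|\G f|,|\G g|$ are bounded below by a constant determined by $\|\Sp f\|_{L^1(Q)}$, and a \emph{bad} region consisting of small neighborhoods of the zeros. The hypothesis that $\|\Sp f\|_{L^1(Q)}$ is not too small compared to $\|\Sp\|_{L^\infty}$ enters here via the sub-mean-value inequality for $|F|^2$, which forces a point $z_\star \in Q$ with $|\G f(z_\star)|^2 \gtrsim \|\Sp f\|_{L^1(Q)}$, anchoring the phase; Jensen's formula and the global $L^\infty$-bound control the total area of the bad region. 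On the good region the argument above applies directly, while on the bad region the weight $w = |\G f||\G g|$ vanishes at the singularities, so the $L^2$-contribution stays integrable and is absorbed into the factor $\bigl(\|\Sp\|_{L^\infty}/\|\Sp f\|_{L^1(Q)}\bigr)^{1/2}$. The hardest part will be keeping this zero-handling quantitative enough that no superfluous logarithmic factors creep in and the $1/2$-power on $\|\Sp f - \Sp g\|_{L^2(Q)}$ is preserved cleanly.
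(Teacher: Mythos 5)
Your splitting into amplitude and phase terms is fine, and the amplitude estimate $\||\G g|-|\G f|\|_{L^2(Q)}^2\le\|\Sp f-\Sp g\|_{L^1(Q)}\le\|\Sp f-\Sp g\|_{L^2(Q)}$ is correct. The phase term is where your route both diverges from the paper's and breaks. What you propose --- pass to $\phi=\Im\log(G/F)$, trade $\nabla\phi$ for $\nabla\log|G/F|$ via Cauchy--Riemann, and close with a weighted Poincar\'e inequality --- is essentially the strategy of the authors' earlier papers \cite{MR3935477,jemsgrohsrathmair}, and it is precisely the strategy that forces a first-order Sobolev norm of the data onto the right-hand side: $\int_Q w|\nabla\log(|\G g|/|\G f|)|^2$ involves derivatives of the measurement discrepancy and is not controlled by $\|\Sp f-\Sp g\|_{L^2(Q)}$. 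Your proposed escape via a Caccioppoli estimate for $\log|G/F|$ does not work as stated: that function is not harmonic on $Q$ (it has singularities at every zero of $F$ and of $G$, and $g$ is arbitrary, so the zeros of $G$ are not under control); Caccioppoli requires enlarging the domain, whereas the claimed estimate has the same $Q$ on both sides; and even granting the step, $\|\log(|G|/|F|)\|_{L^2}$ is not bounded by $\|\Sp f-\Sp g\|_{L^2}$ without pointwise lower bounds on $|F|$ and $|G|$, which returns you to the zero problem you deferred. Separately, the weighted Poincar\'e constant for $w=|\G f|\,|\G g|$ on $Q$ is not an absolute constant: for signals as in Example \ref{ex:sharpness} the weight concentrates in two components separated by a region where it is exponentially small, and the inequality $\int_Q w(\phi-\alpha)^2\lesssim\int_Q w|\nabla\phi|^2$ degenerates. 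You would have to prove that this Poincar\'e constant is dominated by $\left(\|\Sp f\|_{L^\infty}+\|\Sp g\|_{L^\infty}\right)/\|\Sp f\|_{L^1(Q)}$ uniformly in $f$ and $g$; that is a substantial unproved claim and is really the heart of the matter.

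The paper's proof avoids all of this by never differentiating the data. It reconstructs the tensor $F\otimes\bar F(z,\zeta)=\sum_{k,l}\frac{\dbar^l\dd^k|F|^2(0)}{k!\,l!}z^k\bar\zeta^l$ (Proposition \ref{prop:recformula}), bounds each coefficient by $|\dbar^l\dd^k u(0)|^2\lesssim\|u\|_{L^2(Q)}\cdot(\text{higher-order derivatives of } u)$ via Fourier series and Cauchy--Schwarz --- this single step is where both the factor $\|\Sp f-\Sp g\|_{L^2(Q)}^{1/2}$ and the $L^\infty$ factor originate --- controls those higher derivatives through the Fock reproducing kernel (Lemma \ref{lem:smoothgrowth}), and extracts $\|\Sp f\|_{L^1(Q)}^{-1}$ from the elementary inequality of Proposition \ref{prop:reldeltadist}. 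To salvage your approach you would need, at minimum, a quantitative weighted Poincar\'e inequality for $|\G f|\,|\G g|$ with the stated constant and a derivative-free substitute for the Caccioppoli step; as written, the proof has a genuine gap.
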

The aforementioned result implies that, once we demand that the signals we consider satisfy for example that $\|\Sp f\|_{L^\infty}, \|\Sp g\|_{L^\infty}\le 1$, we have stability on any unit square which is located in a position in the time-frequency plane where $f$ has significant portion of its overall energy. 
That the stability potentially behaves badly when moving to squares which feature very little energy, may be considered acceptable as there is not too much to reconstruct anyways in that case.
In fact, the stability can be arbitrarily bad if the square is located in an unfavourable position.
To convince the reader that Theorem \ref{thm:specialcasebasicgabor} is a somewhat sharp result we have a closer look at a concrete example.
\begin{ex}\label{ex:sharpness}
Let $Q$ denote the square of side length $1$ centered at the origin, with its sides parallel to the axes and let $a>0$.
With $\varphi=e^{-\pi\cdot^2}$ denoting the Gaussian, we define
\begin{equation}
    f=f_a:=\varphi(\cdot+a)+\varphi(\cdot-a),\quad\text{and}\quad g=g_a:=\varphi(\cdot+a)-\varphi(\cdot-a).
\end{equation}
According to Lemma \ref{lem:estexsharp}, the ratio between the distance of the signals and the distance of the measurements deteriorates rather quickly as $a\rightarrow \infty$, i.e., 
\begin{equation}
    \frac{\min_{\tau\in\torus} \|\G g-\tau \G f\|_{L^2(Q)}}{\|\Sp f - \Sp g\|_{L^2(Q)}^{1/2} } \gtrsim e^{\pi a}.
\end{equation}
In our stability estimate \eqref{eq:specialcasebasicgabor} the term $\|\Sp f\|_{L^1(Q)}$ appearing in the denominator compensates for this type of degeneration.\\
In Lemma \ref{lem:estexsharp} we also compute the respective Gabor transforms of $f$ and $g$, and find that 
\begin{equation}
    \G f(x,y) = \exp\left\{-\frac\pi2 a^2 -\frac\pi2 |z|^2 -\pi\mi xy \right\} \cos(a\pi\mi \bar{z}), \quad z=x+\mi y.
\end{equation}
Based on this factorization we make a couple of observations: As $a\rightarrow \infty$ 
\begin{enumerate}[i)]
    \item the energy $\|\Sp f\|_{L^1(Q)}$ on the square under consideration decreases (exponentially fast),
    \item the roots of the anti-holomorphic factor $\cos(a\pi \mi \bar{z})$ (and thus also the roots of $\G f$) accumulate at a density of $\sim a^{-1}$ on the imaginary axis,
    while its modulus grows like $e^{a\pi|x|}$; 
    at the same time, $\G f$ becomes strongly oscillatory in the $y$ direction, and thus sustains a loss of smoothness.
\end{enumerate}
Let us put this into perspective with Theorem \ref{thm:specialcasebasicgabor}.
As discussed before, Theorem \ref{thm:specialcasebasicgabor} tells us for $f$ to be an instability the square under consideration must feature little of the overall energy. In that sense, observation (i) is a necessary property.\\
It would be interesting to know if the same can be said about observation (ii), i.e., for $f$ to be an instability, is it necessary that $\mathcal{G}f$ possesses many roots along a line passing through $Q$?
\end{ex}

The proof methods employed to establish Theorem \ref{thm:specialcasebasicgabor} are fundamentally different to the techniques used in our prior work \cite{MR3935477,jemsgrohsrathmair}.
The proof of Theorem \ref{thm:specialcasebasicgabor} relies on the idea that the recovery of the missing phase information can be formulated as an extension problem: 
If we only know certain values of the modulus of a holomorphic function $F$, this means that we only have access to values of the tensor $F\otimes\bar{F}(z,\zeta)=F(z)\overline{F(\zeta)}$ on a subset of the diagonal $\zeta=z$.
Phase information can be obtained by inferring values of the tensor beyond the diagonal. More precisely, we will make use of the expansion formula 
\begin{equation}\label{eq:recformulatensor}
    F\otimes\bar{F}(z,\zeta) = \sum_{k,l\in\N} \frac{(\dbar^l \dd^k |F|^2)(0)}{k!l!}z^k\bar{\zeta}^l,
\end{equation}
which provides an explicit formula for our reconstruction task $\Sp f\rightsquigarrow \G f\sim e^{i\alpha}\G f$.
Clearly, in order to obtain a quantitative result from formula \eqref{eq:recformulatensor} we have to analyze the family of differential operators $(\dbar^l\dd^k)_{k,l\in\N}$ acting on the squared modulus of certain analytic functions.
If these functions are sufficiently smooth locally, we will be able to derive useful bounds for the differential operators. 
For entire functions the distribution of zeros -- and thus local smoothness -- is strongly connected to the global growth.
In fact as we shall see, if growth can be controlled, which can be done in our case, so can smoothness.
\subsubsection{Stability on larger domains}
Having established stability on unit squares we will move on to analyzing stability on more general, larger domains. 
Before we present our main result into this direction we introduce the concepts we require.

\begin{defn}
A weighted graph consists of a graph $\graphg=(\graphv,\graphe)$ with vertex set $\graphv$ and edge set $\graphe$, and a pair of functions $w:\graphv\rightarrow \R_+$ and $\sigma:\graphe\rightarrow\R_+$.
By setting $\sigma(e)=0$ if $e\in \graphv\times\graphv \setminus \graphe$ we may assume that $\sigma$ is defined on all of $\graphv\times\graphv$.
\end{defn}
To capture the connectedness of a weighted graph we introduce the graph version of the Cheeger constant.
\begin{defn}
Let $(\graphg,w,\sigma)$ be a weighted graph.
The graph \emph{Cheeger constant} of $\graphg$ is defined by 
\begin{equation}
    h_\graphg := \inf_{S\subseteq \graphv} \frac{\sigma(\partial S)}{\min\left\{w(S),w(\graphv\setminus S) \right\}},
\end{equation}
where the boundary $\partial S$ denotes the set of all edges which have one endpoint in $S$ and the other one in the complement $\graphv\setminus S$, and where for a set $V\subseteq \graphv$ we write $w(V)$ for $\sum_{v\in V} w(V)$, and likewise for 
$ E\subseteq \graphe$ we write $w(E)$ instead of $\sum_{e\in\graphe} \sigma(e)$.
\end{defn}
Our general stability result, which will be established in Section \ref{sec:stabilitygeneral}, is very much reminiscent of Theorem \ref{thm:stabilitypriorwork}.
\begin{thmlet}[Theorem \ref{thm:generalcasecheeger}, simplified]\label{thm:b}
Let $f\in L^2(\R)$ and let $\mathcal{Q}$ be a finite family of unit squares.
We set $\Omega:= \bigcup_{Q\in\mathcal{Q}} Q,$ and define a weighted graph by identifying the vertex set with $\mathcal{Q}$, and by setting 
\begin{equation}
    w(v):=\|\Sp f\|_{L^1(Q)}, v\in\graphv \quad \text{and} \quad \sigma(e):= \|\Sp f\|_{L^1(Q\cap Q')}^2, \,e=(v,v')\in\graphe.\footnote{We implicitly identify $Q$ with $v$ and $Q'$ with $v'$}
\end{equation}
Then it holds for all $g\in L^2(\R)$ that 
\begin{equation}\label{eq:estthmb}
    \min_{\tau\in\torus} \|\G g- \tau \G f\|_{L^2(\Omega)} \lesssim h_\graphg^{-2} \cdot \|\Sp f - \Sp g\|_{L^2(\Omega)}^{1/2}, 
\end{equation}
with the implicit constant mildly depending on $\Sp f, \Sp g$ and $\mathcal{Q}$.
\end{thmlet}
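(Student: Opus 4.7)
The plan is to invoke Theorem \ref{thm:specialcasebasicgabor} locally on every square $Q\in\mathcal{Q}$ and then glue the resulting local phase factors $\tau_Q\in\T$ into a single global phase via a weighted Poincaré-type inequality on the graph $(\graphg,w,\sigma)$. For each $Q\in\mathcal{Q}$, Theorem \ref{thm:specialcasebasicgabor} supplies $\tau_Q\in\T$ such that
\begin{equation}
\epsilon_Q^2:=\|\G g-\tau_Q\G f\|_{L^2(Q)}^2\lesssim\frac{\|\Sp f\|_{L^\infty}+\|\Sp g\|_{L^\infty}}{\|\Sp f\|_{L^1(Q)}}\cdot\|\Sp f-\Sp g\|_{L^2(Q)}.
\end{equation}

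The crux of the argument is to control the discrepancies between the $\tau_Q$ on overlapping squares. For neighbouring $Q,Q'\in\mathcal{Q}$ the triangle inequality restricted to $Q\cap Q'$ yields
\begin{equation}
|\tau_Q-\tau_{Q'}|^2\,\|\Sp f\|_{L^1(Q\cap Q')}=\|(\tau_Q-\tau_{Q'})\G f\|_{L^2(Q\cap Q')}^2\lesssim\epsilon_Q^2+\epsilon_{Q'}^2.
\end{equation}
Multiplying this identity by $\|\Sp f\|_{L^1(Q\cap Q')}$ produces an estimate compatible with the edge weight $\sigma(e)=\|\Sp f\|_{L^1(Q\cap Q')}^2$, and summing over edges gives
\begin{equation}
\sum_{e\in\graphe}|\tau_Q-\tau_{Q'}|^2\sigma(e)\lesssim\sum_{Q\in\mathcal{Q}}\epsilon_Q^2,
\end{equation}
the implicit constant absorbing the bounded overlap degree of $\mathcal{Q}$ together with $\|\Sp f\|_{L^\infty}$.

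The next step is the classical weighted graph Cheeger/Poincaré inequality
\begin{equation}
\min_{c\in\C}\sum_{v\in\graphv}|\phi(v)-c|^2 w(v)\lesssim h_\graphg^{-2}\sum_{e\in\graphe}|\phi(u)-\phi(v)|^2\sigma(e),
\end{equation}
which is equivalent to the lower bound $\lambda_2\gtrsim h_\graphg^2$ on the spectral gap of the weighted Laplacian. Applied to $\phi(Q):=\tau_Q$ it produces a weighted mean $\bar\tau\in\C$; rounding $\tau:=\bar\tau/|\bar\tau|\in\T$ is cheap because all $\tau_Q$ are unimodular (the bound $(1-|\bar\tau|)^2\le 1-|\bar\tau|^2$ keeps the projection cost within a universal constant), yielding
\begin{equation}
\sum_{Q\in\mathcal{Q}}|\tau_Q-\tau|^2\|\Sp f\|_{L^1(Q)}\lesssim h_\graphg^{-2}\sum_{Q\in\mathcal{Q}}\epsilon_Q^2.
\end{equation}
Combining this with the local estimates through a final triangle inequality
\begin{equation}
\|\G g-\tau\G f\|_{L^2(\Omega)}^2\le\sum_{Q}\|\G g-\tau\G f\|_{L^2(Q)}^2\lesssim\sum_Q\epsilon_Q^2+\sum_Q|\tau-\tau_Q|^2\|\Sp f\|_{L^1(Q)},
\end{equation}
inserting the bound on $\epsilon_Q^2$, and applying the Cauchy--Schwarz inequality over $Q\in\mathcal{Q}$ to pass from $\sum_Q\|\Sp f-\Sp g\|_{L^2(Q)}$ to $\|\Sp f-\Sp g\|_{L^2(\Omega)}$ delivers \eqref{eq:estthmb}.

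The main obstacle is the third step: enforcing a single $\tau\in\T$ that is compatible with all the $\tau_Q$ simultaneously. The scalar weighted Cheeger/Poincaré estimate is classical, but it must be adapted to the $\T$-valued data, and the edge weight $\sigma(e)=\|\Sp f\|_{L^1(Q\cap Q')}^2$ has been chosen precisely so that the edge sum in step two collapses to $\sum_Q\epsilon_Q^2$ with all remaining $\Sp f,\Sp g$ dependence swept into the implicit constant. A secondary technical point is keeping the combinatorial constants arising from the overlap structure of $\mathcal{Q}$ under control; this is the source of the mild dependence on $\mathcal{Q}$ advertised in the statement of \eqref{eq:estthmb}.
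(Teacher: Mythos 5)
Your proposal is correct and follows the same local-to-global strategy as the paper: apply Theorem~\ref{thm:specialcasebasicgabor} on each square, control the discrepancy of local phases on overlaps (your overlap estimate is exactly the content of Lemma~\ref{lem:overlap}, with the edge weight $\sigma(e)=\|\Sp f\|_{L^1(Q\cap Q')}^2$ chosen for the same reason), and then invoke the weighted graph Poincar\'e/Cheeger inequality to align the local phases with a global constant. The one place where you genuinely diverge is the treatment of the unimodularity constraint: the paper takes the local minimizers over all of $\C$, obtains a global \emph{complex} constant $c_0$ from the spectral-gap argument, and then needs Lemma~\ref{lem:dropconstraint} (borrowed from Cheng \etal) to pass back from $\min_{c\in\C}$ to $\min_{\tau\in\T}$ — this is what produces the extra $\vol(\Omega)^{1/2}$ term in Theorem~\ref{thm:generalcase}. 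You instead keep $\T$-valued local phases throughout and round the weighted mean $\bar\tau$ back to the circle; your observation that the projection cost $(1-|\bar\tau|)^2\le 1-|\bar\tau|^2$ is dominated by the variance is correct (including the degenerate case $\bar\tau=0$), so Lemma~\ref{lem:dropconstraint} and the $\vol(\Omega)^{1/2}$ term are avoided. The price is a slightly cruder constant in front of $h_\graphg^{-2}$: the paper's Lemma~\ref{lem:overlap} cancels the factor $\min\{\|\Sp f\|_{L^1(Q)},\|\Sp f\|_{L^1(Q')}\}^{-1}$ from the local stability against $\|\Sp f\|_{L^1(Q\cap Q')}^2$ in the edge weight, whereas your bound $\|\Sp f\|_{L^1(Q\cap Q')}\le\|\Sp f\|_{L^\infty}$ leaves an $M$-type factor (and the full-graph degree $\nu$, plus the $\delta_0$ from Cheeger's inequality) in the implicit constant — all of which is admissible under the ``mild dependence on $\Sp f,\Sp g,\mathcal{Q}$'' claimed in \eqref{eq:estthmb}, but would not reproduce the sharper constants of Theorem~\ref{thm:generalcase}.
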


\begin{rem}
At this stage we would like to stress that Theorem \ref{thm:b} does not represent an unrestricted improvement on Theorem \ref{thm:stabilitypriorwork}. This may not be instantly apparent from the simplified version above and require a closer look at Theorem \ref{thm:generalcasecheeger} and its proof.
The main limitation of our new results consists in the restriction to finite domains. 
To give a concrete example, our prior results in \cite{MR3935477,jemsgrohsrathmair} produce a finite stability constant for the signal $f$ being the Gaussian, and the domain being the full time-frequency plane, whereas Theorem \ref{thm:b} fails this case. 
\end{rem}

The proof techniques employed to establish the above result are strongly inspired by the work of Cheng \etal \cite{cheng2020stable}.
Since, the sub problems we consider are not uniformly stable, we need to make some minor adaptations.
However, once the graph is correctly set up, our results follow from essentially replicating the proofs in the aforementioned article.

\subsubsection{Discretization}
In Section \ref{sec:discretization} we aim to take the stability analysis one step further and consider the case where one can only access the spectrogram data on a discrete set of sampling points. 
Into this direction, in Section \ref{sec:discretization} we will establish a result of the following flavour.
\begin{thmlet}[Theorem \ref{thm:discretizationsquare}, simplified]
Assume the conditions of Theorem \ref{thm:b}. Moreover, suppose $\Omega\subseteq \R^2$ is a square of side length $s>0$.
Then, for all $\varepsilon>0$,
there exists a finite set $\Lambda \subseteq \Omega$ with $|\Lambda| \sim \max\left\{\ln(1/\varepsilon),s\right\}^2$, with associated weights $(w_\lambda)_{\lambda\in\Lambda}\subseteq \R_+$ satisfying that $\sum_\lambda w_\lambda=s^2$ such that it holds for all $g\in L^2(\R)$ that
\begin{equation}
    \min_{\tau\in\torus} \|\G g- \tau \G f\|_{L^2(\Omega)} \lesssim h_\graphg^{-2} \left( \|\Sp f - \Sp g\|_{\ell^2_w(\Lambda_\varepsilon)}^{1/2} +\varepsilon \right),
\end{equation}
with the implicit constant mildly depending on $\Sp f, \Sp g$ and $\mathcal{Q}$.
\end{thmlet}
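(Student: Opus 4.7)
The plan is to reduce the discretisation statement to a quadrature estimate for analytic integrands and then combine it with Theorem~\ref{thm:b}. Concretely, once I have produced a finite set $\Lambda\subseteq\Omega$ with positive weights $(w_\lambda)_{\lambda\in\Lambda}$ satisfying $\sum_\lambda w_\lambda=s^{2}$ and the sampling inequality
\begin{equation*}
\|\Sp f-\Sp g\|_{L^{2}(\Omega)}^{2}\;\le\;\sum_{\lambda\in\Lambda}w_\lambda\bigl(\Sp f(\lambda)-\Sp g(\lambda)\bigr)^{2}\;+\;C\,\varepsilon^{2},
\end{equation*}
with $C$ mildly dependent on $\Sp f,\Sp g$ and $\mathcal{Q}$, the theorem follows from Theorem~\ref{thm:b} via the elementary split $\sqrt{a+b}\le\sqrt{a}+\sqrt{b}$.

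The engine of the argument is the \emph{quantitative analyticity} of the spectrogram. From the Bargmann-type representation $\G f=c\cdot e^{-\pi\mi xy}\,e^{-\pi|z|^{2}/2}\,\overline{F(z)}$ with $F$ entire (already implicit in Example~\ref{ex:sharpness}, where $F(w)=e^{-\pi a^{2}/2}\cosh(a\pi w)$) one has $\Sp f(x,y)=e^{-\pi(x^{2}+y^{2})}|F(x+\mi y)|^{2}$. Splitting $|F(w)|^{2}=F(w)\,\widetilde{F}(\bar{w})$ with the companion entire function $\widetilde{F}(w):=\overline{F(\bar{w})}$ promotes $\Sp f$ to an entire function of two complex variables, and the reproducing-kernel bound $|F(w)|\le\|\Sp f\|_{L^{\infty}(\R^{2})}^{1/2}e^{\pi|w|^{2}/2}$ yields
\begin{equation*}
\bigl|\Sp f(x+\mi a,\,y+\mi b)\bigr|\;\le\;\|\Sp f\|_{L^{\infty}(\R^{2})}\,e^{2\pi(a^{2}+b^{2})}\qquad\text{for all }(x,y,a,b)\in\R^{4},
\end{equation*}
with the analogous bound for $\Sp g$. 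Consequently $\Sp f-\Sp g$, and hence its square, extends analytically to any polydisc $\{|\mathrm{Im}\,x|,|\mathrm{Im}\,y|\le r\}$ around $\Omega$ with modulus controlled by $(\|\Sp f\|_{L^{\infty}}+\|\Sp g\|_{L^{\infty}})^{2}e^{8\pi r^{2}}$.

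With this analyticity I would apply a tensor-product quadrature rule tailored to analytic integrands — for example, Clenshaw--Curtis or Gauss--Legendre — whose error on a single coordinate direction takes the form $M(\rho)\cdot\rho^{-2N}$, with $\rho>1$ the Bernstein-ellipse parameter around $[0,s]$ and $M(\rho)$ controlled by the above growth bound on the rescaled ellipse of semi-minor axis $r_{\rho}=s(\rho-\rho^{-1})/4$. Balancing the exponential quadrature gain $\rho^{-2N}$ against the super-exponential growth $e^{8\pi r_{\rho}^{2}}$ of the integrand and choosing $N\sim\max\{s,\log(1/\varepsilon)\}$ nodes per coordinate direction drives the total quadrature error below $\varepsilon^{2}$. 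The resulting tensor-product node set has cardinality $N^{2}\sim\max\{s,\log(1/\varepsilon)\}^{2}$, and the quadrature weights, after normalisation, are positive and sum to $|\Omega|=s^{2}$.

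The principal technical obstacle lies in the balancing argument of the previous paragraph. A naive global Gauss--Legendre with fixed $\rho$ produces a node count of order $(s^{2}+\log(1/\varepsilon))^{2}$, whereas a naive piecewise rule over unit sub-squares produces $s^{2}\log^{2}(1/\varepsilon)$; obtaining the additive $\max\{s,\log(1/\varepsilon)\}^{2}$ of the statement requires choosing $\rho$ (and possibly a composite subdivision) jointly so as to exploit the precise order-$2$ growth of the Bargmann extension while keeping the per-piece error exponentially small. The remaining bookkeeping — tracking the mild dependence of $C$ on $\|\Sp f\|_{L^{\infty}},\|\Sp g\|_{L^{\infty}},\mathcal{Q}$, verifying that the weights $w_\lambda$ are positive with $\sum_\lambda w_\lambda=s^{2}$, and assembling the final inequality from Theorem~\ref{thm:b} — is routine.
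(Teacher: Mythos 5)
Your proposal follows essentially the same route as the paper: reduce to a cubature error bound for the integrand $(\Sp f-\Sp g)^2$, extend it to an entire function of two complex variables via the analytic continuation of the Gabor/Bargmann transform, bound the extension by $e^{O(r^2)}$ on a strip of imaginary width $r$, and then balance the exponential quadrature gain of a tensor-product Gauss rule against this order-two growth (cf.\ Theorem \ref{thm:chawlaadapted} and Proposition \ref{prop:errorestspectrodiff}). The cosmetic differences — you extend $\Sp f$ through the Bargmann factorization and the reproducing-kernel bound $|F(w)|\le\|\Sp f\|_{L^\infty}^{1/2}e^{\pi|w|^2/2}$, whereas the paper works with the explicit integral $\mathcal{T}f(z,\zeta)$ and Cauchy--Schwarz; you invoke a Bernstein-ellipse error bound where the paper adapts Chawla's contour-integral representation to a rectangular contour — do not change the substance, and your growth estimate $|\Sp f(x+\mi a,y+\mi b)|\le\|\Sp f\|_{L^\infty}e^{2\pi(a^2+b^2)}$ is correct (and slightly cleaner than the paper's, which is phrased in terms of $\|f\|_{L^2}^2$).

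The one point you leave open — obtaining $N\sim\max\{s,\ln(1/\varepsilon)\}$ nodes per direction rather than $N\sim s^2+\ln(1/\varepsilon)$ — is a genuine gap in your write-up, but your reservation is well founded: with a single global Gauss rule the optimal balance of $(b/s)^{-N}$ against $e^{4\pi b^2}$ is attained at $b=\sqrt{N/(8\pi)}$ and forces $N\gtrsim s^2$, exactly as you compute. The paper's own proof of Theorem \ref{thm:discretizationsquare} does not escape this: the step ``$N\ln N-N-\ln N\ge(N-2)\ln N$'' is false (it would require $\ln N\ge N$), and plugging $N=C's$ into the requirement $\frac{N-1}{2}\ln N\gtrsim (N+3)\ln(\sqrt{8\pi}s+2)+\frac N2+4\ln\frac1\varepsilon$ leaves a deficit of order $\frac{C's}{2}\ln s$, so the stated node count should read $\max\{\ln(1/\varepsilon),\ln\kappa,s^2\}^2$ for this construction. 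In short: your argument, completed with the paper's choice $b=\sqrt{N/(8\pi)}$, $a=s+b$, proves the theorem with the (correct) larger node count; the sharper count in the statement is not established by either your proposal or the paper's proof as written.
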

The above result will be obtained by combining Theorem \ref{thm:b} with a tailor-made error estimate for a carefully picked cubature rule.
Note that, if we write out the norm on the right hand side in \eqref{eq:estthmb}, we get an integral over a real analytic function. 
Standard estimates for numerical integration schemes typically relate the error to the smoothness of the function to be integrated. 
We shall pursue a different path here, and show that the error can be controlled in terms of the size of the holomorphic extension of the integrand, cf. Theorem \ref{thm:chawlaadapted}.
In our case, this holomorphic extension is quite easy to handle. 
This approach saves us from estimating high order derivatives of the integrand $(\Sp f - \Sp g)^2$, which would have been a rather tedious mission.
\subsection{Notation}
Throughout we will identify the complex plane $\C$ with $\R^2$ by virtue of $z\simeq x+\mi y$. 
We use \emph{Wirtinger} derivatives defined by 
\begin{equation}
 \dd=\frac12 \left(\frac{\dd}{\dd x}-\mi \frac{\dd}{\dd y}\right) \quad\text{and}\quad 
 \dbar = \frac12 \left(\frac{\dd}{\dd x}+\mi \frac{\dd}{\dd y}\right).
\end{equation}
The two dimensional Lebesgue measure will be denoted by $A$. 
We write  $B_r=\{(x,y):\, \|(x,y)\|_2 < r\}$ for the open disk of radius $r>0$ centered at the origin. 
If the center is different from the origin, say at $z_0$ we write $B_r(z_0)$.
\section{Fully continuous measurements}\label{sec:fullycontinuous}
This section is devoted to establishing $L^2$-stability estimates for Gabor phase retrieval.
As outlined in the introduction at the beginning we consider very basic domains.
A more general result will be obtained by stitching together these local results in a suitable manner.
First and foremost, we begin with collecting and recalling a couple of facts which will turn out to be useful for our further analysis.
\subsection{Prerequisites}
As already mentioned, our setting essentially takes place in a space of analytic functions.
We will make this more concrete now.
\begin{defn}
The \emph{Fock space} $\F^p$, $1\le p \le \infty$ consists of all entire functions $F$, such that 
\begin{equation}
    \|F\|_{\F^p}:=
     \left\| F \exp\left\{-\frac\pi2 |\cdot|^2\right\}\right\|_{L^p(\C)} < +\infty.
\end{equation}
\end{defn}
The Fock spaces form a nested family, that is, if $p<p'$ it holds that $\F^p \subseteq \F^{p'}$.
The space $\F^2$ will be particularly interesting for our purpose.
Equipped with the canonical inner product 
  \begin{equation}
   \langle F,G\rangle_{\F^2} = \int_\C F(z)\overline{G(z)} e^{-\pi |z|^2}\,dA(z).
  \end{equation}
  $\F^2$ is a Hilbert space. In fact, it is a reproducing kernel Hilbert space and the reproducing kernel is given by $e^{\pi z \bar{w}}$, meaning that for all $F\in\F^2$ we have that
\begin{equation}
 F(z)=\int_\C F(w) e^{\pi z\bar{w}} e^{-\pi|w|^2} \,dA(w),\quad  z\in\C.
\end{equation}
There is a natural identification between $L^2(\R)$ and $\F^2$.
More precisely, the so-called \emph{Bargmann} transform defined by 
\begin{equation}
 \mathfrak{B}f(z):= \int_\R f(t) \exp\left\{2\pi t z- \pi t^2 -\frac{\pi}2 z^2\right\} \,dt
\end{equation}
is an isometry from $L^2(\R)$ to $\F^2$, and moreover, for all $f\in L^2(\R)$ it holds that 
\begin{equation}
 \G f(x,-y) = \mathfrak{B}f(z) \exp\left\{\pi \mi xy -\frac{\pi}2 |z|^2 \right\}, \qquad z=x+\mi y\in\C.
\end{equation}

\subsection{Basic case}\label{sec:stabilitysquare}
This section is devoted to establishing a first stability result, which formulated in terms of analytic functions in the Fock space reads as follows.
\begin{thm}\label{thm:basicstabfock}
There exists a universal constant $K>0$ such that 
for all $r>0$ and $F,G\in \F^2$ it holds that 
 \begin{equation}
  \min_{\tau\in\torus} \left\| G-\tau F\right\|_{L^2(B_r)} \le K  r^2 e^{4\pi^2 r^2}  \frac{\|F\|_{\F^\infty} + \|G\|_{\F^\infty}}{\|F\|_{L^2(B_r)}} \||F|^2 - |G|^2\|_{L^2((-1/2,1/2)^2)}^{1/2}.
 \end{equation}
\end{thm}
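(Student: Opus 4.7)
The approach follows the tensor reconstruction scheme sketched in the introduction. Since $F, G$ are entire, the tensor extensions $F\otimes \bar F(z,\zeta)=F(z)\overline{F(\zeta)}$ and $G\otimes \bar G$ are holomorphic in $z$ and antiholomorphic in $\zeta$. Applying formula \eqref{eq:recformulatensor} to both and subtracting yields
\begin{equation}
    (F\otimes \bar F - G\otimes \bar G)(z, \zeta) = \sum_{k,l \ge 0} \frac{\dbar^l \dd^k (|F|^2 - |G|^2)(0)}{k!\, l!}\, z^k \bar\zeta^l.
\end{equation}
The overall plan is to (i) estimate the Taylor coefficients on the right in terms of $\|F\|_{\F^\infty}$, $\|G\|_{\F^\infty}$, and $\||F|^2-|G|^2\|_{L^2((-1/2,1/2)^2)}$; (ii) sum the series to control $F\otimes \bar F - G\otimes \bar G$ on $B_r \times B_r$; and (iii) transfer the tensor bound into the desired $L^2(B_r)$ bound on $G-\tau F$ by choosing the optimal unimodular $\tau\in\torus$.

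For (i) I would extend $|F|^2-|G|^2$ to the entire bianalytic function
\begin{equation}
    \hat\rho(z, w) := F(z)\overline{F(\bar w)} - G(z)\overline{G(\bar w)}
\end{equation}
on $\C^2$, which satisfies $\hat\rho(z,\bar z) = (|F|^2-|G|^2)(z)$ together with the growth bound $|\hat\rho(z,w)| \lesssim (\|F\|_{\F^\infty}+\|G\|_{\F^\infty})^2 \exp(\tfrac{\pi}{2}(|z|^2+|w|^2))$ inherited from the Fock-$\infty$ norms. A plain Cauchy estimate on the polydisc $\{|z|=|w|=R\}$ produces factorial decay of the coefficients in $(k,l)$ but ignores the diagonal $L^2$-smallness. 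To capture the latter, my plan is to use a quantitative analytic continuation: since bianalytic functions are rigidly determined by their diagonal values, the Cauchy representation of $\dbar^l\dd^k(|F|^2-|G|^2)(0)$ can be rewritten as a linear functional acting on the diagonal trace $\hat\rho(\cdot,\bar\cdot)$, whose $L^2$-norm on the square $(-1/2,1/2)^2$ is then the quantity that enters. The square-root exponent on $\||F|^2-|G|^2\|_{L^2}$ should appear at this stage, reflecting the intrinsic quadratic nature of the map $F\mapsto|F|^2$.

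For (ii), summing the coefficient bounds over $(k,l)$ and optimizing the Cauchy radius $R$ in terms of $r$ will produce an $L^\infty(B_r\times B_r)$ bound on $F\otimes \bar F - G\otimes \bar G$ of the form $r^2 e^{4\pi^2 r^2} (\|F\|_{\F^\infty}+\|G\|_{\F^\infty}) \||F|^2-|G|^2\|_{L^2((-1/2,1/2)^2)}^{1/2}$. For (iii), I would test the identity $(G\otimes \bar G - F\otimes \bar F)(z,\zeta) = G(z)\overline{G(\zeta)} - F(z)\overline{F(\zeta)}$ against $F(\zeta)$ and integrate over $\zeta\in B_r$, producing on the right a linear combination of the form $\|F\|_{L^2(B_r)}^2 F(z) - \langle G, F\rangle_{L^2(B_r)} G(z)$. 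Choosing $\tau$ to align the phase of $\langle G,F\rangle_{L^2(B_r)}$ and applying Cauchy--Schwarz in $z$ then yields $\|G-\tau F\|_{L^2(B_r)} \lesssim \|F\otimes\bar F - G\otimes\bar G\|_{L^\infty(B_r\times B_r)}/\|F\|_{L^2(B_r)}$, which supplies the denominator appearing in the statement.

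The hard part is the coefficient estimate in step (i): one must exploit the global growth of $F,G$ together with the local $L^2$-smallness of $|F|^2-|G|^2$ while keeping the dependence on $(k,l)$ sharp enough that the series summation in step (ii) produces only the claimed $r^2 e^{4\pi^2 r^2}$ pre-factor. The prior work \cite{MR3935477,jemsgrohsrathmair} sidestepped precisely this difficulty by measuring the phaseless error in a weighted first-order Sobolev norm; avoiding derivatives of $|F|^2-|G|^2$ is exactly the point of the new approach and makes the hybrid growth/smoothness balance the crux of the proof.
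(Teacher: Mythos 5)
Your overall architecture matches the paper's: expand the tensor difference $F\otimes\bar F-G\otimes\bar G$ via the coefficients $\dbar^l\dd^k(|F|^2-|G|^2)(0)$, bound those coefficients using both the local $L^2$-smallness of $|F|^2-|G|^2$ and the global Fock growth, sum the series, and then pass from the tensor distance to $\min_\tau\|G-\tau F\|_{L^2(B_r)}$. However, the step you yourself identify as the crux --- the coefficient estimate in (i) --- is left as a plan ("quantitative analytic continuation'', "Cauchy representation rewritten as a linear functional acting on the diagonal trace'') that does not work as stated: point evaluations of derivatives are not bounded linear functionals on $L^2$ of the square, so no Cauchy-type representation can by itself convert $\||F|^2-|G|^2\|_{L^2((-1/2,1/2)^2)}$ into a bound on $\dbar^l\dd^k(|F|^2-|G|^2)(0)$. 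The paper's actual mechanism is an interpolation inequality: writing $u=|F|^2-|G|^2$ and expanding $u$ in a Fourier series on the square, Cauchy--Schwarz gives $\|\dbar^q\dd^p u\|_{L^2(Q)}^2\le\|u\|_{L^2(Q)}\,\|\dbar^{2q}\dd^{2p}u\|_{L^2(Q)}$; the doubled-order derivatives factor as $F^{(2p)}\overline{F^{(2q)}}-G^{(2p)}\overline{G^{(2q)}}$ and are bounded pointwise on $Q$ by $\Gamma(p+1)\Gamma(q+1)(2\pi)^{2p+2q}$ times the $\F^\infty$ norms via the reproducing kernel of $\F^2$. This is also where the exponent $1/2$ on $\||F|^2-|G|^2\|_{L^2}$ really comes from --- it is the Cauchy--Schwarz interpolation between $u$ and its higher derivatives, not (as you write) the "quadratic nature'' of $F\mapsto|F|^2$. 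Without this (or an equivalent) idea, step (i) is a genuine gap.

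A secondary, more repairable issue is step (iii): pairing the tensor difference against $F(\zeta)$ over $B_r$ yields control of $\bigl\|\overline{\langle G,F\rangle}\,G-\|F\|^2F\bigr\|_{L^2(B_r)}$, i.e.\ of $G-cF$ for a multiplier $c$ that need not be unimodular (and gives nothing when $\langle G,F\rangle=0$). Promoting this to a bound on $\min_{\tau\in\torus}\|G-\tau F\|$ requires additionally comparing $\|F\|$, $\|G\|$ and $|\langle F,G\rangle|$; the paper does this directly by an algebraic identity for $\delta_r(F,G)^2=\alpha^4+\beta^4-2\gamma^2$ and a two-case estimate. You should either reproduce such a case analysis or cite an equivalent lemma; as sketched, the phase-alignment step does not close.
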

From the above result it follows immediately that for all $f,g\in L^2(\R)$ and $r>0$ it holds that 
\begin{multline}\label{eq:estlocalgaborbase}
    \min_{\tau\in\torus} \left\| \G g-\tau \G f\right\|_{L^2(B_r)} \\
    \lesssim  r^2 e^{4\pi^2 r^2} \left(\frac{\|\Sp f\|_{L^\infty(\C)} + \|\Sp g\|_{L^\infty(\C)}}{\|\Sp f\|_{L^1(B_r)}} \right)^{1/2}  \|\Sp f - \Sp g\|_{L^2((-1/2,1/2)^2)}^{1/2}.
\end{multline}
What if we are given spectrogram data on a unit square which is different from $(-1/2,1/2)^2$ though?
That is, what if the sides of the square are not parallel to the axes, and/or the center is different from the origin?
To deal with this general case we make a simple observation.
\begin{rem}\label{rem:usegeneralsquare}
Note that, the Fock space $\mathcal{F}^2$ is invariant w.r.t. rotations, i.e. with $\theta \in [0,2\pi)$ it holds that $F(e^{\mi\theta}\cdot) \in \mathcal{F}^2$ if and only if $F \in \mathcal{F}^2.$
Even though $\mathcal{F}^2$ cannot just be simply replaced by $\mathcal{G}L^2(\R)$ in this statement, it holds true that there exists a real-valued function $\mu_\theta$ such that for all $f\in L^2(\R)$ there exists $f_\theta\in L^2(\R)$ satisfying that
\begin{equation}
    \G f(e^{\mi\theta}z) = \G f_\theta(z) \cdot e^{\mi \mu_\theta(z)}.
\end{equation}
The situation is similar with translations. That is, given $\tau\in \mathbb{C}$ there exists a real-valued function $\nu_\tau$ such that for all $f \in L^2(\R)$ there exists $f_\tau \in L^2(\R)$ satisfying that 
\begin{equation}
    \G f(z - \tau) = \G f_\tau(z) \cdot e^{\mi \nu_\tau(z)} .
\end{equation}
\end{rem}
Remark \ref{rem:usegeneralsquare} allows us to replace $(-1/2,1/2)^2$ by an arbitrary square of unit size in \eqref{eq:estlocalgaborbase} and we obtain 
\begin{cor}\label{cor:basicstabgabor}
Let $r>0$, $z_0\in\C$, $f,g\in L^2(\R)$ and let $Q$ be a square of unit size with center at $z_0$. There exists a universal constant $K>0$ (independent of $r,z_0,f,g,Q$) such that 
 \begin{multline}
  \min_{\tau\in\torus} \|\G g- \tau \G f\|_{L^2(B_r(z_0))} \\
  \le K   r^2 e^{4\pi^2 r^2}  \left(\frac{\|\Sp f\|_{L^\infty(\C)} + \|\Sp g\|_{L^\infty(\C)}}{\|\Sp f\|_{L^1(B_r(z_0))}} \right)^{1/2} \|\Sp f  - \Sp g\|_{L^2(Q)}^{1/2}.
 \end{multline}
\end{cor}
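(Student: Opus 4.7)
The plan is to deduce the corollary from the already-established axis-aligned, origin-centered base case \eqref{eq:estlocalgaborbase} by transporting the arbitrary unit square $Q$ to $(-1/2,1/2)^2$ via the covariance properties collected in Remark \ref{rem:usegeneralsquare}. First I would choose an angle $\theta\in[0,2\pi)$ so that the affine isometry $\Phi(z):=e^{\mi\theta}z+z_0$ maps $(-1/2,1/2)^2$ onto $Q$ and, since it is a rigid motion fixing the point $z_0$ up to the translation, also maps $B_r(0)$ onto $B_r(z_0)$. Two successive applications of Remark \ref{rem:usegeneralsquare} (first translation by $z_0$, then rotation by $\theta$) produce $\tilde f,\tilde g\in L^2(\R)$ and a signal-independent real-valued function $\alpha$ such that
$$
\G f(\Phi(z)) = \G\tilde f(z)\,e^{\mi\alpha(z)}, \qquad \G g(\Phi(z)) = \G\tilde g(z)\,e^{\mi\alpha(z)}.
$$

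Because the unimodular factor $e^{\mi\alpha}$ is common to both identities, for every $\tau\in\torus$ it cancels pointwise:
$$
\bigl|\G g(\Phi(z)) - \tau\,\G f(\Phi(z))\bigr| = \bigl|\G\tilde g(z) - \tau\,\G\tilde f(z)\bigr|,
$$
and in particular $\Sp\tilde f = \Sp f\circ\Phi$ and $\Sp\tilde g = \Sp g\circ\Phi$. Since $\Phi$ is a Euclidean isometry, changing variables yields
\begin{align*}
\|\G g - \tau\G f\|_{L^2(B_r(z_0))} &= \|\G\tilde g - \tau\G\tilde f\|_{L^2(B_r(0))}, \\
\|\Sp f - \Sp g\|_{L^2(Q)} &= \|\Sp\tilde f - \Sp\tilde g\|_{L^2((-1/2,1/2)^2)},
\end{align*}
together with $\|\Sp\tilde f\|_{L^\infty(\C)} = \|\Sp f\|_{L^\infty(\C)}$, $\|\Sp\tilde g\|_{L^\infty(\C)} = \|\Sp g\|_{L^\infty(\C)}$, and $\|\Sp\tilde f\|_{L^1(B_r(0))} = \|\Sp f\|_{L^1(B_r(z_0))}$.

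Finally I would apply \eqref{eq:estlocalgaborbase} to the pair $\tilde f,\tilde g$ and take the infimum over $\tau\in\torus$ to obtain the claimed estimate, with the same universal constant $K$. The only real difficulty is bookkeeping: the reduction works precisely because $\mu_\theta$ and $\nu_\tau$ in Remark \ref{rem:usegeneralsquare} are \emph{signal-independent}, so that the combined phase $e^{\mi\alpha}$ can be pulled out of the difference $\G g - \tau\G f$ and annihilated by the modulus. Once this cancellation is verified, no further analytic work is required.
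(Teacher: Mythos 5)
Your argument is correct and is exactly the route the paper takes: the paper derives the corollary from \eqref{eq:estlocalgaborbase} by invoking Remark \ref{rem:usegeneralsquare}, and your write-up simply makes explicit the bookkeeping (signal-independence of the phase factors, cancellation of $e^{\mi\alpha}$ inside $|\G g - \tau\G f|$, and the measure-preserving change of variables) that the paper leaves implicit. No discrepancy.
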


The proof of Theorem \ref{thm:basicstabfock} is split up into several sub steps.
At the heart of the proof lies a formula that allows us to recover the tensor $F\otimes\bar{F}$ from $|F|$.
Given two functions $F,G\in L^2(B_r)$ we introduce the quantity
\begin{equation}\label{def:deltar}
\delta_r(F,G):= \left\|F\otimes\bar{F}-G\otimes\bar{G} \right\|_{L^2(B_r\times B_r)},
\end{equation}
which will serve as an intermediate stepping stone in our stability analysis.
\begin{prop}\label{prop:recformula}
 For all $F\in \mathcal{O}(\C)$ it holds that 
 \begin{equation}
  F\otimes \bar{F} (z,\zeta)= \sum_{k,l\in\N} \frac{\dbar^l \dd^k |F|^2(0)}{k!\cdot l!} z^k\bar{\zeta}^l,\quad z,\zeta\in\C.
 \end{equation}
 Moreover, for $r>0$ if also $G\in \mathcal{O}(\C)$ it holds that 
 \begin{equation}
  \delta_r(F,G)^2
  = \sum_{k,l\in \N} \omega_k \omega_l \left|\dbar^l\dd^k (|F|^2-|G|^2)(0) \right|^2,
 \end{equation}
 where the weights are defined by
 \begin{equation}\label{def:gamma}
  \omega_k=\omega_k(r):=\frac{\|z^k\|_{L^2(B_r)}^2}{(k!)^2} = \frac{\pi r^{2k+2}}{k!(k+1)!}.
 \end{equation}
\end{prop}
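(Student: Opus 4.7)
The plan is to compute everything by power series expansions and orthogonality of monomials on the disk; there is no serious obstacle, only careful bookkeeping.

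First I would write $F\in\mathcal{O}(\C)$ as $F(z)=\sum_{k\ge 0}a_k z^k$, with the series converging uniformly on compact sets. Then
\begin{equation}
F\otimes\bar F(z,\zeta)=\sum_{k,l\ge 0} a_k\bar a_l\, z^k \bar\zeta^l,
\end{equation}
again uniformly on compacta of $\C^2$, and in particular the series converges in $L^2(B_r\times B_r)$. Restricting to the diagonal $\zeta=z$ yields the pointwise expansion $|F|^2(z)=\sum_{k,l}a_k\bar a_l z^k \bar z^l$. Since the Wirtinger derivatives satisfy $\partial z=1$, $\partial\bar z=0$, $\dbar z=0$, $\dbar\bar z=1$, and commute, they act termwise on this double series:
\begin{equation}
\dbar^l\dd^k|F|^2(z)=\sum_{m\ge k,\,n\ge l} a_m\bar a_n \frac{m!}{(m-k)!}\frac{n!}{(n-l)!} z^{m-k}\bar z^{n-l}.
\end{equation}
Evaluating at $z=0$ picks out the term $m=k$, $n=l$, giving $\dbar^l\dd^k|F|^2(0)=k!\,l!\,a_k\bar a_l$. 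Substituting $a_k\bar a_l=\dbar^l\dd^k|F|^2(0)/(k!\,l!)$ into the tensor expansion proves the first formula.

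For the second claim I would apply the first formula to both $F$ and $G$ and subtract, using linearity of the derivatives. Setting $H:=|F|^2-|G|^2$ and $c_{k,l}:=\dbar^l\dd^k H(0)/(k!\,l!)$, this gives the $L^2(B_r\times B_r)$-convergent expansion
\begin{equation}
(F\otimes\bar F - G\otimes\bar G)(z,\zeta)=\sum_{k,l\ge 0} c_{k,l}\, z^k \bar\zeta^l.
\end{equation}
The monomials $\{z^k\bar\zeta^l\}_{k,l\ge 0}$ are pairwise orthogonal in $L^2(B_r\times B_r)$: by passing to polar coordinates in each variable, $\int_{B_r} z^k\overline{z^{k'}}\,dA(z)=0$ unless $k=k'$, in which case the integral equals $\|z^k\|_{L^2(B_r)}^2=\pi r^{2k+2}/(k+1)$, and analogously in $\zeta$. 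Parseval's identity for this orthogonal system therefore yields
\begin{equation}
\delta_r(F,G)^2=\sum_{k,l} |c_{k,l}|^2 \,\|z^k\|_{L^2(B_r)}^2 \|\bar\zeta^l\|_{L^2(B_r)}^2 = \sum_{k,l}\omega_k\omega_l\,|\dbar^l\dd^k H(0)|^2,
\end{equation}
after using the definition $\omega_k=\|z^k\|_{L^2(B_r)}^2/(k!)^2$ and simplifying $\pi r^{2k+2}/[(k!)^2(k+1)]=\pi r^{2k+2}/[k!(k+1)!]$, which also confirms the closed form for $\omega_k$ claimed in \eqref{def:gamma}.

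The only subtle points are (i) justifying the termwise action of $\dbar^l\dd^k$, which is immediate from the uniform convergence of the power series on compact sets, and (ii) the interchange of summation and double integration needed for Parseval, which is likewise justified by the $L^2$-convergence of the monomial expansion. Both are routine for entire functions, so I do not anticipate any real obstacle.
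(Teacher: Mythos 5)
Your proposal is correct and follows essentially the same route as the paper: identify the coefficients of the tensor expansion with $\dbar^l\dd^k|F|^2(0)/(k!\,l!)$ (the paper does this via the identity $\dbar^l\dd^k(F\bar F)=F^{(k)}\overline{F^{(l)}}$ and Taylor expansion, which is equivalent to your termwise differentiation of the double series), and then apply Parseval for the orthogonal system $\{z^k\bar\zeta^l\}$ in $L^2(B_r\times B_r)$. The computation of $\|z^k\|_{L^2(B_r)}^2$ and the resulting closed form for $\omega_k$ also match.
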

\begin{proof}
 Note that $F$ is holomorphic, and therefore $\bar{F}$ is antiholomorphic.
 Thus, we get that 
 \begin{equation}
  \dbar^l \dd^k |F|^2 = \dbar^l \dd^k (F\cdot\bar{F}) = F^{(k)} \cdot \overline{F^{(l)}}.
 \end{equation}
Now, Taylor expansion yields the first statement since
\begin{equation}
 F\otimes\bar{F}(z,\zeta) = F(z)\cdot \overline{F(\zeta)} = \left(\sum_{k\in\N} \frac{F^{(k)}(0)}{k!}z^k \right)\cdot \overline{\left(\sum_{l\in\N} \frac{F^{(l)}(0)}{l!}\zeta^l\right)}.
\end{equation}
To prove the second statement we set $u:=|F|^2-|G|^2$ and observe that $\left(z^k\bar{\zeta}^l\right)_{k,l}$ forms a system of pairwise orthogonal vectors in $L^2(B_r\times B_r)$.
Hence, it follows from the first part that 
\begin{equation}
\begin{aligned}
  \delta_r(F,G)^2
  &= \sum_{k,l\in \N} \left|\dbar^l\dd^k u(0) \right|^2 \cdot \frac{\left\|z^k\bar{\zeta}^l\right\|_{L^2(B_r\times B_r)}^2}{(k!)^2\cdot (l!)^2}\\
  &= \sum_{k,l\in \N} \left|\dbar^l\dd^k u(0) \right|^2 \cdot 
  \frac{\|z^k\|_{L^2(B_r)}^2}{(k!)^2} \cdot \frac{\|\zeta^l\|_{L^2(B_r)}^2}{(l!)^2}\\
  &=  \sum_{k,l\in \N} \left|\dbar^l\dd^k u(0) \right|^2 \omega_k \omega_l.
\end{aligned}
\end{equation}
\end{proof}
In the next step we relate $\delta_r(F,G)$ to a more familiar notion of similarity.
\begin{prop}\label{prop:reldeltadist}
 For all $r>0$ and $F,G\in L^2(B_r)$ it holds that 
 \begin{equation}
  \min_{\tau\in\torus} \left\|G-\tau F\right\|_{L^2(B_r)}^2 \lesssim \frac{\delta_r(F,G)^2}{\|F\|_{L^2(B_r)}^2},
 \end{equation}
with the minimal implicit constant being not greater than $5$.
\end{prop}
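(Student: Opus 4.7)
My plan is to reduce the inequality to a concrete algebraic statement in three real parameters, and then verify it by an elementary concavity argument. Set
\begin{equation*}
a := \|F\|_{L^2(B_r)}^2, \quad b := \|G\|_{L^2(B_r)}^2, \quad c := |\langle F, G\rangle_{L^2(B_r)}|,
\end{equation*}
so that Cauchy--Schwarz gives $0 \le c \le \sqrt{ab}$; the case $a = 0$ is trivial, so I assume $a > 0$.

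The first step is to compute each side explicitly in these parameters. Expanding the squared norm as $\|G - \tau F\|_{L^2(B_r)}^2 = a + b - 2\Re(\bar\tau \langle F, G\rangle)$ and taking $\tau$ to be the unit phase of $\langle F, G\rangle$ yields $\min_{\tau \in \torus}\|G - \tau F\|_{L^2(B_r)}^2 = a + b - 2c$. Expanding the integrand defining $\delta_r(F, G)^2$ and applying Fubini to evaluate the mixed cross term as $\langle F, G\rangle \cdot \overline{\langle F, G\rangle} = c^2$, one obtains $\delta_r(F, G)^2 = a^2 + b^2 - 2c^2$. The proposition therefore reduces to the algebraic inequality
\begin{equation*}
a\,(a + b - 2c) \le 5\,(a^2 + b^2 - 2c^2)
\end{equation*}
for all $a, b \ge 0$ and $c \in [0, \sqrt{ab}]$.

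The second step is to verify this algebraic inequality. Dividing through by $a^2$ and introducing the rescaled parameters $u := b/a \ge 0$ and $\gamma := c/\sqrt{ab} \in [0, 1]$, it becomes
\begin{equation*}
g(u, \gamma) := 5\,(1 + u^2 - 2u\gamma^2) - (1 + u - 2\gamma\sqrt u) \ge 0.
\end{equation*}
Observing that the coefficient of $\gamma^2$ in $g$ equals $-10u \le 0$, the function $\gamma \mapsto g(u, \gamma)$ is concave, so its minimum on $[0, 1]$ is attained at one of the endpoints. At $\gamma = 0$ one has $g(u, 0) = 5u^2 - u + 4$, a quadratic in $u$ with discriminant $1 - 80 < 0$, hence strictly positive. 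At $\gamma = 1$ the expression factors as
\begin{equation*}
g(u, 1) = 5(u - 1)^2 - (\sqrt u - 1)^2 = (\sqrt u - 1)^2\bigl[5(\sqrt u + 1)^2 - 1\bigr],
\end{equation*}
which is nonnegative since $(\sqrt u + 1)^2 \ge 1$. This completes the verification.

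I do not anticipate any serious obstacle: the whole argument reduces to two elementary expansions followed by a one-variable concavity check. The constant $5$ is comfortably sub-optimal, and the same scheme can be pushed to yield the sharp value $(1+\sqrt{2})/2$, but $5$ produces the cleanest factorization at the boundary $\gamma = 1$ and is more than sufficient for the subsequent Fock-space applications.
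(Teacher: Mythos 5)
Your proof is correct and takes essentially the same route as the paper: both arguments reduce the claim to the scalar inequality $a(a+b-2c)\le 5\,(a^2+b^2-2c^2)$ on $0\le c\le\sqrt{ab}$ via the identities $\min_{\tau\in\torus}\|G-\tau F\|^2=a+b-2c$ and $\delta_r(F,G)^2=a^2+b^2-2c^2$. The only difference is the final algebraic verification — you use concavity in $\gamma$ and check the endpoints, while the paper splits the left-hand side into two terms and distinguishes the cases $\|G\|>\|F\|/2$ and $\|G\|\le\|F\|/2$ — which is a cosmetic variation.
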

\begin{proof}
To simplify notation we omit the explicit reference to the $L^2(B_r)$ norm and innner product, respectively and define three numbers
\begin{equation}
 \alpha:=\|F\|,\quad \beta:=\|G\|,\quad \gamma:=|\langle F,G\rangle|.
\end{equation}
Note that Cauchy-Schwarz inequality tells us that $\gamma\le \alpha\beta$.
Provided that $\gamma\neq 0$ we can estimate
\begin{equation}
 \min_{\tau\in\torus} \left\|G-\tau F\right\|^2 \le \left\|G-\frac{\langle G,F\rangle}{\gamma}F\right\|^2 = \beta^2+\alpha^2-2\gamma = (\alpha-\beta)^2 + 2(\alpha\beta-\gamma).
\end{equation}
Obviously, the inequality remains true if $\gamma=0$.
We rewrite
\begin{equation}
  \delta_r(F,G)^2 = \alpha^4 + \beta^4-2\gamma^2 = (\alpha+\beta)^2(\alpha-\beta)^2+2(\alpha\beta+\gamma)(\alpha\beta-\gamma).
\end{equation}
Thus, it suffice to show that
\begin{equation}\label{eq:tobeestd}
 \alpha^2 \frac{(\alpha-\beta)^2+2(\alpha\beta-\gamma)}{(\alpha+\beta)^2(\alpha-\beta)^2+2(\alpha\beta+\gamma)(\alpha\beta-\gamma)}\le 5.
\end{equation}
Since we have that the first part of the left hand side
\begin{equation}
 \alpha^2 \frac{(\alpha-\beta)^2}{(\alpha+\beta)^2(\alpha-\beta)^2+2(\alpha\beta+\gamma)(\alpha\beta-\gamma)}\le\frac{\alpha^2}{(\alpha+\beta)^2}\le 1,
\end{equation}
it remains to show that 
\begin{equation}\label{eq:tobeestd2}
 \frac{2\alpha^2(\alpha\beta-\gamma)}{(\alpha+\beta)^2(\alpha-\beta)^2+2(\alpha\beta+\gamma)(\alpha\beta-\gamma)}\le 4.
\end{equation}
To prove \eqref{eq:tobeestd2} we distinguish two cases depending on a) $\beta>\alpha/2$ and b) $\beta \le \alpha/2$. \\

In case a) we estimate
\begin{equation}
 \begin{aligned}
  \frac{2\alpha^2(\alpha\beta-\gamma)}{(\alpha+\beta)^2(\alpha-\beta)^2+2(\alpha\beta+\gamma)(\alpha\beta-\gamma)}
  &\le \frac{2\alpha^2(\alpha\beta-\gamma)}{2(\alpha\beta+\gamma)(\alpha\beta-\gamma)}\\
  &\le\frac{\alpha^2}{\alpha\beta+\gamma}\\
  &\le \frac{\alpha^2}{\alpha^2/2} \\
  &=2.
 \end{aligned}
\end{equation}
In case b) we have that 
\begin{equation}
 \begin{aligned}
  \frac{2\alpha^2(\alpha\beta-\gamma)}{(\alpha+\beta)^2(\alpha-\beta)^2+2(\alpha\beta+\gamma)(\alpha\beta-\gamma)} &\le \frac{\alpha^4}{(\alpha+\beta)^2(\alpha-\beta)^2}\\
  &\le \frac{\alpha^4}{\alpha^4/4}\\
  &=4,
 \end{aligned}
\end{equation}
which settles the proof.
\end{proof}

The upcoming Proposition deals with estimating $\delta_r(F,G)$.
\begin{prop}\label{prop:bounddelta}
For all $F,G\in \F^\infty$ and $r>0$ it holds that 
 \begin{equation}
  \delta_r(F,G)^2\lesssim r^4 e^{8\pi^2 r^2}\cdot \left(\|F\|_{\F^\infty}^2 + \|G\|_{\F^\infty}^2 \right) \cdot \||F|^2-|G|^2\|_{L^2(Q)}.
 \end{equation}

 \end{prop}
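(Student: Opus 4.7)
By Proposition \ref{prop:recformula}, $\delta_r(F,G)^2 = \sum_{k,l\in\N}\omega_k\omega_l|A_{k,l}|^2$ with $A_{k,l}:=\dbar^l\dd^k u(0)$ and $u:=|F|^2-|G|^2$. Since $u=F\bar F-G\bar G$, one has $A_{k,l}=F^{(k)}(0)\overline{F^{(l)}(0)}-G^{(k)}(0)\overline{G^{(l)}(0)}$, and these are precisely the coefficients in the polarized expansion $u(z)=\sum\tfrac{A_{k,l}}{k!\,l!}z^k\bar z^l$. The task is thus to control each $|A_{k,l}|$ in terms of $\|u\|_{L^2(Q)}$ and $M:=\|F\|_{\F^\infty}^2+\|G\|_{\F^\infty}^2$. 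To produce the linear $\|u\|^{1}$ dependence demanded by the statement, rather than the naive quadratic one, I would write $|A_{k,l}|^2=|A_{k,l}|\cdot|A_{k,l}|$ and bound the two factors by two different means.

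For a cutoff parameter $N\in\N$, split $u=P_N+R_N$ with $P_N:=\sum_{k,l\le N}\tfrac{A_{k,l}}{k!\,l!}z^k\bar z^l$ a polynomial of bidegree $(N,N)$. The first factor is bounded via polynomial estimates on $Q$: combining a sharp Bernstein/Markov-type inequality at the interior point $0$ of the unit square with a two-dimensional Nikol'skii inequality $\|P_N\|_{L^\infty(Q)}\lesssim N\|P_N\|_{L^2(Q)}$, one obtains
$$|A_{k,l}|\lesssim N(cN)^{k+l}\|P_N\|_{L^2(Q)}\qquad (k,l\le N),$$
with an explicit constant $c$ (morally $c=2\pi$, reflecting the sharp Markov constant on an interval of length $1$). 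The remainder $\|R_N\|_{L^2(Q)}$ is controlled directly via the Fock bound $|F(z)|\le\|F\|_{\F^\infty}e^{\pi|z|^2/2}$, which by Cauchy's estimate gives $|F^{(k)}(0)|/k!\le\|F\|_{\F^\infty}(\pi e/k)^{k/2}$ and hence super-exponential decay of $|A_{k,l}|$ once $\max(k,l)$ exceeds a constant multiple of $r^2$. The second factor is bounded by the same Fock/Cauchy estimate applied directly, yielding
$$|A_{k,l}|\le M\, k!\,l!\,(\pi e/k)^{k/2}(\pi e/l)^{l/2}.$$

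Multiplying the two bounds and inserting the product into $\sum\omega_k\omega_l|A_{k,l}|^2$ with $\omega_k=\pi r^{2k+2}/(k!(k+1)!)$, Stirling together with a Laplace/saddle-point evaluation of the resulting double sum reduces the estimate to something of the shape
$$\delta_r^2\lesssim r^4 e^{\kappa(c,N,r)}M\|u\|_{L^2(Q)}+M\varepsilon(N),$$
where $\varepsilon(N)$ decays super-exponentially in $N$. Choosing $N$ proportional to $r$ (together with the sharp constant $c=2\pi$) makes $\kappa$ equal to $8\pi^2r^2$, producing the advertised bound. The principal obstacle is securing a sufficiently sharp interior Bernstein/Markov inequality on $Q$; without the improved interior constant, the exponent in $r$ would come out cubic rather than quadratic, and the stated $e^{8\pi^2r^2}$ would not be reachable. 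A secondary difficulty is executing the mixed poly-times-Fock summation carefully enough so that the linear $\|u\|_{L^2(Q)}^{1}$ dependence emerges cleanly, without losing the correct $M$-dependence or picking up spurious tail errors.
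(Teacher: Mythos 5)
There is a genuine gap, and it is structural. Your split $u=P_N+R_N$ leaves you with a final bound of the shape $\delta_r^2\lesssim C(N,r)\,M\,\|u\|_{L^2(Q)}+M\,\varepsilon(N)$, where the remainder $M\varepsilon(N)$ is controlled only by the Fock norms and carries no factor of $\|u\|_{L^2(Q)}$. Such an additive error cannot be absorbed into the multiplicative statement of the proposition: the claimed bound must vanish when $|F|^2=|G|^2$ on $Q$, whereas $M\varepsilon(N)$ does not. The only escape is to let $N$ depend on $\|u\|_{L^2(Q)}$ (e.g.\ $N\sim\log(1/\|u\|)$ so that $\varepsilon(N)\le\|u\|$), but then the Bernstein--Markov factor $(cN)^{k+l}$ blows up as $\|u\|\to0$ and the constant is no longer uniform. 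In other words, your scheme obtains the crucial linear factor $\|u\|_{L^2(Q)}$ only for the low-order coefficients $A_{k,l}$ with $k,l\le N$; the high-order coefficients are never tied to $\|u\|$ at all. There are secondary problems too: the dominant terms of the double sum $\sum\omega_k\omega_l|A_{k,l}|^2$ sit at $k,l\sim\pi r^2$ (that is where $\omega_k(k!)^2(\pi e/k)^k\sim \pi r^2(\pi r^2)^k/k!$ peaks), so a cutoff $N\propto r$ would discard the main contribution; and the iterated interior Bernstein/Markov inequality with the precise constant you need is exactly the step you admit you cannot secure.

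The paper avoids all of this by extracting the factor $\|u\|_{L^2(Q)}$ from \emph{every} coefficient simultaneously, with no truncation. Writing $|A_{k,l}|^2\lesssim\|\dbar^l\dd^ku\|_{L^2(Q)}^2+\|\nabla\dbar^l\dd^ku\|_{L^2(Q)}^2$ (a pointwise Sobolev bound at $0$) and then expanding $u$ in a Fourier series on $Q$, Cauchy--Schwarz in the Fourier coefficients gives the interpolation inequality
\begin{equation}
\|\dbar^q\dd^pu\|_{L^2(Q)}^2\le\|u\|_{L^2(Q)}\cdot\|\dbar^{2q}\dd^{2p}u\|_{L^2(Q)},
\end{equation}
which is Lemma \ref{lem:boundderivsquare}: one factor of $\|u\|_{L^2(Q)}$, one factor of a derivative of twice the order. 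The latter is then bounded via $\dbar^{2l}\dd^{2k}|F|^2=F^{(2k)}\overline{F^{(2l)}}$ and the reproducing-kernel estimate $\|F^{(p)}\|_{L^\infty(Q)}\lesssim(2\pi)^p\Gamma(p/2+1)\|F\|_{\F^\infty}$ (Lemma \ref{lem:smoothgrowth}), after which the weighted sum collapses to $\big(\sum_k\omega_k(2\pi)^{2k}(k+1)!\big)^2=\pi^2r^4e^{8\pi^2r^2}$ exactly. If you want to salvage your route, the missing ingredient is precisely a replacement for this interpolation inequality that makes the tail coefficients proportional to $\|u\|_{L^2(Q)}$ as well.
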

In order to prove Proposition \ref{prop:bounddelta} we require two auxiliary results. 
The first one tells us that the derivatives of a small function can only be large if the function is not too smooth.
\begin{lem}\label{lem:boundderivsquare}
 Suppose that $u\in C^\infty(Q)$, where $Q=[-1/2,1/2]^2$.
 Then it holds for all $k,l\in\N$ that 
 \begin{multline}
  \left| \dbar^l \dd^k u(0)\right|^2 \lesssim \|u\|_{L^2(Q)} \\
  \times 
  \left(\|\dbar^{2l}\dd^{2k}u\|_{L^2(Q)} + \|\dbar^{2l}\dd^{2k+2}u\|_{L^2(Q)} +\|\dbar^{2l+2}\dd^{2k}u\|_{L^2(Q)}  \right).
 \end{multline}
\end{lem}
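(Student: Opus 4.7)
Write $v := \dbar^l \dd^k u$, so the goal is to bound $|v(0)|^2$. My plan is to combine a product-form Sobolev embedding in two dimensions with a Fourier-side log-convexity interpolation, so that the three right-hand summands of the claim drop out by direct substitution.

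The first ingredient is the local product-form Sobolev estimate
\begin{equation*}
|w(0)|^2 \lesssim \|w\|_{L^2(Q)}\cdot\bigl(\|w\|_{L^2(Q)}+\|\dd^2 w\|_{L^2(Q)}+\|\dbar^2 w\|_{L^2(Q)}\bigr)
\end{equation*}
for an arbitrary smooth $w$ on $Q$. I would prove it by fixing a cutoff $\chi\in C_c^\infty(Q)$ with $\chi\equiv 1$ on a neighbourhood of the origin, writing $|(\chi w)(0)|=\bigl|\int_{\R^2}\widehat{\chi w}\bigr|$, and splitting the integration over $\{|\zeta|\le R\}$ and $\{|\zeta|>R\}$. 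Cauchy--Schwarz in each region gives $|(\chi w)(0)|\le CR\|\chi w\|_{L^2(\R^2)}+CR^{-1}\|D^2(\chi w)\|_{L^2(\R^2)}$ in two dimensions; optimising in $R$ yields the product of an $L^2$ and a second-order norm of $\chi w$. A Leibniz expansion together with Young's inequality then reduces those to the analogous norms of $w$ on $Q$, using that $\|\dd^2 w\|_{L^2}$, $\|\dbar^2 w\|_{L^2}$, and $\|\dd\dbar w\|_{L^2}$ are comparable on compactly supported functions.

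Applying this inequality to $w = v$ and using $\dd^2 v = \dbar^l\dd^{k+2}u$, $\dbar^2 v = \dbar^{l+2}\dd^k u$ yields
\begin{equation*}
|v(0)|^2 \lesssim \|\dbar^l\dd^k u\|_{L^2(Q)}\cdot\bigl(\|\dbar^l\dd^k u\|_{L^2}+\|\dbar^l\dd^{k+2}u\|_{L^2}+\|\dbar^{l+2}\dd^k u\|_{L^2}\bigr).
\end{equation*}
To upgrade each of these three products into $\|u\|_{L^2}\cdot\|\dbar^{2l'}\dd^{2k'}u\|_{L^2}$ for the target indices, I would appeal to a Fourier-side log-convexity: for compactly supported $u$, $\|\dbar^a\dd^b u\|_{L^2(\R^2)}^2 = C\int_{\R^2}|\zeta|^{2(a+b)}|\hat u(\zeta)|^2\,dA(\zeta)$, so the map $t\mapsto\int|\zeta|^{2t}|\hat u|^2$ is log-convex. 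Because $(k+l)+(k+l+2)=0+(2k+2l+2)$ while the pair $\{0,2k+2l+2\}$ is more spread, log-convexity immediately gives
\begin{equation*}
\|\dbar^l\dd^k u\|_{L^2}\cdot\|\dbar^l\dd^{k+2}u\|_{L^2}\le\|u\|_{L^2}\cdot\|\dbar^{2l}\dd^{2k+2}u\|_{L^2},
\end{equation*}
and symmetrically for the pair $(l+2,k)$; the diagonal case $\|v\|_{L^2}^2\le\|u\|_{L^2}\cdot\|\dbar^{2l}\dd^{2k}u\|_{L^2}$ is the standard Cauchy--Schwarz special case. Substituting the three bounds produces exactly the summands of the lemma.

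The main obstacle is that both ingredients above are formulated for compactly supported functions, whereas $u$ is merely $C^\infty(Q)$ with no prescribed boundary behaviour. Passing to $\chi u$ is harmless for the Sobolev step, but a Leibniz expansion of $\dbar^a\dd^b(\chi u)$ introduces commutator terms $\dbar^{a-a'}\dd^{b-b'}\chi\cdot\dbar^{a'}\dd^{b'}u$ of strictly lower order in $u$. The delicate part of the proof is to reabsorb each such commutator into the desired right-hand side through repeated applications of the log-convexity interpolation and Young's inequality, so that no derivative of $u$ of order exceeding $2(k+l)+2$ ever appears in the final estimate.
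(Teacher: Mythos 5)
Your route is genuinely different from the paper's. The paper obtains the pointwise bound from the fundamental theorem of calculus with only first-order increments and then doubles the orders by expanding $u$ in a Fourier series on $Q$ and applying Cauchy--Schwarz to the coefficient sums $\sum_{m,n}|\hat u_{m,n}|^2(m^2+n^2)^{p+q}$; you instead use a second-order multiplicative Sobolev embedding and a log-convexity/majorization argument on the Fourier-transform side over $\R^2$. Your Sobolev step is fine (the cutoff there has fixed order, so the constant is absolute), and the log-convexity computation, including the majorization $\{k+l,\,k+l+2\}\prec\{0,\,2k+2l+2\}$, is correct for functions on all of $\R^2$ or for periodic functions.

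The gap is exactly where you place it, and it is not merely delicate --- it cannot be closed in the stated generality. The interpolation inequalities you need \emph{on the bounded square}, already in the diagonal case
\begin{equation}
\|\dbar^l\dd^k u\|_{L^2(Q)}^2\;\le\;C\,\|u\|_{L^2(Q)}\,\|\dbar^{2l}\dd^{2k}u\|_{L^2(Q)},
\end{equation}
are false for functions that are merely $C^\infty(Q)$: take $u(x,y)=x$, $k=1$, $l=0$. Then $\dd u\equiv\tfrac12$ while $\dd^2u=\dd^4u=\dbar^2\dd^2u\equiv0$, so the right-hand side of the interpolation inequality --- and indeed of the lemma itself --- vanishes while the left-hand side equals $\tfrac14$. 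This shows that no amount of commutator bookkeeping can rescue the argument: boundary behaviour is an essential hypothesis, not a technicality. (The same obstruction sits inside the paper's own proof, which tacitly differentiates the Fourier series of $u$ term by term, i.e.\ treats $u$ as $\Z^2$-periodic; a corrected statement must either assume periodicity or be localized differently.) Independently, even where the interpolation is available, your proposed cutoff repair cannot deliver a constant uniform in $(k,l)$: the Leibniz expansion of $\dbar^{2l}\dd^{2k}(\chi u)$ involves $\|\partial^n\chi\|_{L^\infty}$ for $n$ up to $2(k+l)$, and for any nonzero $C_c^\infty$ cutoff these grow faster than $C^n n!$ for every $C$ (else $\chi$ would be analytic and compactly supported, hence zero). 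Since Proposition \ref{prop:bounddelta} sums the lemma against the weights $\omega_k\omega_l\sim r^{2k+2l}/(k!(k+1)!\,l!(l+1)!)$, with the derivative norms already consuming a factor $(2\pi)^{2k+2l}k!\,l!$, a super-geometric loss in $(k,l)$ would break the downstream application even if the pointwise estimate could be salvaged.
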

\begin{proof}
First note that it follows from the fundamental theorem of calculus that 
for every function $v\in C^\infty(Q)$ it holds that 
\begin{equation}
 |v(0)|^2 \lesssim \|v\|_{L^2(Q)}^2 + \|\nabla v\|_{L^2(Q)}^2.
\end{equation}
Plug in $v=\dbar^l\dd^k u$ to obtain that 
\begin{equation}\label{eq:estorl2norm}
 |\dbar^l\dd^k u(0)|^2 \lesssim \|\dbar^l \dd^k u\|_{L^2(Q)}^2 + \|\dbar^l \dd^{k+1}u\|_{L^2(Q)}^2 + \|\dbar^{l+1}\dd^k u\|_{L^2(Q)}^2.
\end{equation}
 We expand $u$ in its Fourier series 
 \begin{equation}
  u(x,y)=\sum_{m,n\in\Z} \hat{u}_{m,n} e^{2\pi\mi(mx+ny)},
 \end{equation}
and observe that 
\begin{equation}
\begin{aligned}
 \dd e^{2\pi\mi(mx+ny)} &= \pi\mi (m-\mi n) e^{2\pi\mi(mx+ny)},\\
 \dbar e^{2\pi\mi(mx+ny)} &= \pi\mi(m+\mi n) e^{2\pi\mi(mx+ny)},
\end{aligned}
\end{equation}
and furthermore that for $p,q\in\N$
\begin{equation}
    \dbar^q\dd^p u(x,y)=\sum_{m,n\in\Z} \hat{u}_{m,n}(\pi\mi)^{p+q}(m-\mi n)^p(m+\mi n)^q e^{2\pi\mi(mx+ny)}.
\end{equation}
 Thus, using Cauchy-Schwarz inequality allows us to bound 
 \begin{equation}
  \begin{aligned}
   \left\|\dbar^q\dd^p u\right\|_{L^2(Q)}^2 
        &= \sum_{m,n\in\Z} |\hat{u}_{m,n}|^2 \pi^{2(p+q)} (m^2+n^2)^{p+q}\\
        &\le \left(\sum_{m,n\in\Z} |\hat{u}_{m,n}|^2 \right)^{1/2}
        \cdot \left(\sum_{m,n\in\Z} |\hat{u}_{m,n}|^2 \pi^{4(p+q)}(m^2+n^2)^{2(p+q)} \right)^{1/2}\\
        &= \|u\|_{L^2(Q)} \cdot \|\dbar^{2q}\dd^{2p}u\|_{L^2(Q)}.
  \end{aligned}
 \end{equation}
Combining the estimate with \eqref{eq:estorl2norm} yields the claim.
\end{proof}
Next we relate local smoothness of the functions under consideration to their global growth. 
\begin{lem}\label{lem:smoothgrowth}
 Let $F\in \F^2$ and $p\in \N$. 
 With $Q=[-1/2,1/2]^2$ it holds that 
 \begin{equation}
  \|F^{(p)}\|_{L^\infty(Q)} \le 2^{p+3} \pi^{p+1} \Gamma\left(\frac{p}2 +1\right)\cdot \|F\|_{\F^\infty}.
 \end{equation}
\end{lem}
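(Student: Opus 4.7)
The strategy is to represent $F^{(p)}$ through the reproducing kernel of the Fock space, and then estimate the resulting integral using the pointwise Fock bound $|F(w)|\le \|F\|_{\F^\infty} e^{\pi|w|^2/2}$, which follows directly from the definition of the $\F^\infty$-norm (and holds for any $F\in\F^2\subseteq\F^\infty$).

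First I would differentiate the reproducing formula
\[
F(z)=\int_{\C} F(w)\,e^{\pi z\bar w}\,e^{-\pi|w|^2}\,dA(w)
\]
$p$ times in $z$, which brings down a factor $(\pi\bar w)^p$ and produces
\[
F^{(p)}(z)=\pi^p\int_{\C} F(w)\,\bar w^p\,e^{\pi z\bar w-\pi|w|^2}\,dA(w).
\]
For $z\in Q$ one has $|z|\le 1/\sqrt 2$, so inserting the Fock bound and using $|e^{\pi z\bar w}|\le e^{\pi|z||w|}$ yields
\[
|F^{(p)}(z)|\le \pi^p\|F\|_{\F^\infty}\int_{\C} |w|^p\,e^{\pi|w|/\sqrt 2-\pi|w|^2/2}\,dA(w).
\]

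The second step is to evaluate the remaining scalar integral. Passing to polar coordinates and absorbing the cross term by the elementary estimate $\pi r/\sqrt 2\le \pi r^2/4+\pi/2$ reduces matters to the Gaussian moment $\int_0^\infty r^{p+1} e^{-\pi r^2/4}\,dr$, which the substitution $s=r\sqrt{\pi/2}$ converts into a standard integral and evaluates in closed form as $2^{p+1}\pi^{-(p+2)/2}\Gamma(p/2+1)$. Multiplying everything together yields a bound of the form $C\cdot 2^p\pi^{p/2}\Gamma(p/2+1)\|F\|_{\F^\infty}$ with $C$ an absolute constant, which fits inside the target $2^{p+3}\pi^{p+1}\Gamma(p/2+1)$ with ample room — the slack is of order $(2\pi)^{p/2}$.

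The main, and quite mild, obstacle is simply the bookkeeping of absolute constants; no delicate optimization is required because the target is so generous. A fully elementary alternative route that avoids the reproducing kernel is to apply the Cauchy integral formula on a disk $B_R(z)$ for $z\in Q$, bound $|F|$ on the bounding circle by the Fock growth estimate, and then optimize $R\sim\sqrt{p/\pi}$, invoking Stirling's formula for $p!$ to recover the $\Gamma(p/2+1)$ factor. Either route establishes the lemma with essentially the same amount of work.
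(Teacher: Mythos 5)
Your proposal is correct and follows essentially the same route as the paper: differentiate the reproducing formula of $\F^2$, insert the pointwise bound $|F(w)|\le\|F\|_{\F^\infty}e^{\pi|w|^2/2}$ together with $|z|\le 1/\sqrt2$ on $Q$, and estimate the resulting radial integral by a Gamma function. The only (cosmetic) difference is in the auxiliary integral: the paper completes the square and expands a binomial (its Lemma on Gamma-like integrals), whereas you absorb the linear term via $\pi r/\sqrt2\le\pi r^2/4+\pi/2$; both give a bound comfortably inside the stated constant.
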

\begin{proof}
 We make use of the reproducing property of $\F^2$ which implies that 
 \begin{equation}
  F^{(p)}(z) = \int_\C F(w) (\pi\bar{w})^p e^{\pi z\bar{w}} e^{-\pi|w|^2}\,dA(w).
 \end{equation}
With this we estimate
\begin{equation}
  \left|F^{(p)}(z) \right| \le \pi^p \cdot \|F\|_{\F^\infty} \cdot 
  \int_\C |w|^p \exp\left\{-\frac{\pi}{2}|w|^2 +\pi \Re\{z\bar{w}\} \right\}\,dA(w)
\end{equation}
Since $z\in Q$ implies that $|z|\le 1/\sqrt{2}$ we can further bound the integral by
\begin{equation}
  \int_\C |w|^p \exp\left\{-\frac{\pi}{2}|w|^2 + \frac{\pi}{\sqrt{2}} |w|\right\}\,dA(w)
        =2\pi\int\limits_0^\infty r^{p+1} \exp\left\{-\frac{\pi}{2} r^2 + \frac{\pi}{\sqrt{2}} r\right\} \,dr,
\end{equation}
which -- according to Lemma \ref{lem:estgammalikeint} -- is bounded by $2\pi \cdot 2^{p+2} \Gamma\left(\frac{p}2+1\right)$.
\end{proof}
We are now well prepared for the
\begin{proof}[Proof of Proposition \ref{prop:bounddelta}]
 We set $u:=|F|^2-|G|^2$.
 We combine Proposition \ref{prop:recformula} and Lemma \ref{lem:boundderivsquare} to obtain that
 \begin{multline}
  \delta_r(F,G)^2 
  \lesssim \|u\|_{L^2(Q)} \\
  \times \sum_{k,l\in\N} \omega_k\omega_l \left(\|\dbar^{2l}\dd^{2k}u\|_{L^2(Q)} + \|\dbar^{2l}\dd^{2k+2}u\|_{L^2(Q)} +\|\dbar^{2l+2}\dd^{2k}u\|_{L^2(Q)}  \right).
 \end{multline}
It follows from Lemma \ref{lem:smoothgrowth} that 
\begin{equation}
 \begin{aligned}
  \|\dbar^{2l}\dd^{2k}u\|_{L^2(Q)} 
        &\lesssim \|\dbar^{2l}\dd^{2k}|F|^2\|_{L^2(Q)} + \|\dbar^{2l}\dd^{2k}|G|^2\|_{L^2(Q)}\\
        &\lesssim \|F^{(2k)}\|_{L^\infty(Q)} \|F^{(2l)}\|_{L^\infty(Q)} + \|G^{(2k)}\|_{L^\infty(Q)} \|G^{(2l)}\|_{L^\infty(Q)}\\
        &\lesssim \left(\|F\|_{\F^\infty}^2 + \|G\|_{\F^\infty}^2\right)\cdot 
        (2\pi)^{2k+2l} \cdot k!\cdot l! 
 \end{aligned}
\end{equation}
Likewise, one shows that
\begin{equation}
 \|\dbar^{2l}\dd^{2k+2}u\|_{L^2(Q)} \lesssim \left(\|F\|_{\F^\infty}^2 + \|G\|_{\F^\infty}^2\right)\cdot (2\pi)^{2k+2l} \cdot (k+1)!\cdot l!,
\end{equation}
as well as 
\begin{equation}
 \|\dbar^{2l+2}\dd^{2k}u\|_{L^2(Q)} \lesssim \left(\|F\|_{\F^\infty}^2 + \|G\|_{\F^\infty}^2\right)\cdot (2\pi)^{2k+2l} \cdot k!\cdot (l+1)!.
\end{equation}
Hence, we obtain that 
\begin{equation}
\begin{aligned}
 \delta_r(F,G)^2 &\lesssim \|u\|_{L^2(Q)} \cdot \left(\|F\|_{\F^\infty}^2 + \|G\|_{\F^\infty}^2 \right) \cdot \left(\sum_{k\in\N} \omega_k \cdot (2\pi)^{2k}(k+1)! \right)^2\\
 &\lesssim \|u\|_{L^2(Q)} \cdot \left(\|F\|_{\F^\infty}^2 + \|G\|_{\F^\infty}^2 \right) \cdot r^4 e^{8\pi^2 r^2},
 \end{aligned}
\end{equation}
where, in the last inequality we used that 
\begin{equation}
    \begin{aligned}
     \sum_{k\in\mathbb{N}} \omega_k \cdot (2\pi)^{2k}(k+1)! 
     &= \sum_{k\in\N} \frac{\pi r^{2k+2}}{k!(k+1)!} (2\pi)^{2k} (k+1)!\\
     &= \pi r^2 \sum_{k\in \N} \frac{(2\pi r)^{2k}}{k!}\\
     &= \pi r^2 e^{4\pi^2 r^2}.
    \end{aligned}
\end{equation}
\end{proof}

To arrive at the main statement of this section it merely remains to combine our preparatory results.
\begin{proof}[Proof of Theorem \ref{thm:basicstabfock}]
The similarity between $F$ and $G$ on $B_r$ can be bounded by 
\begin{equation}
    \begin{aligned}
     \min_{\tau\in\torus} \|G-\tau F\|_{L^2(B_r)}^2
                &\overset{\text{Prop. \ref{prop:reldeltadist}}}{\lesssim} \frac{\delta_r(F,G)^2}{\|F\|_{L^2(B_r)}^2}\\
                &\overset{\text{Prop. \ref{prop:bounddelta}}}{\lesssim}  \frac{ r^4 e^{8\pi^2 r^2} \left(\|F\|_{\F^\infty}^2 + \|G\|_{\F^\infty}^2\right) \||F|^2-|G|^2\|_{L^2(Q)}}{\|F\|_{L^2(B_r)}^2}.
    \end{aligned}
\end{equation}
 Taking roots yields the claim.
\end{proof}


\subsection{General case}\label{sec:stabilitygeneral}
The aim of the present section is to modify the techniques developed in \cite{cheng2020stable} so to make them applicable to our setting.\\ 

We begin with with recalling a few graph theoretical concepts.
Suppose $(\graphg,w,\sigma)$ is a weighted graph.
Then, the \emph{adjacency matrix} $A$ is defined by 
\begin{equation}
    A(u,v)= \sigma ((u,v)), \quad u,v\in\graphv.
\end{equation}
The \emph{degree} of a vertex $v\in\graphv$ is given by $\dg(v)=\sigma\Big( \bigcup_{u\in\graphv} \{(v,u)\} \Big) = \sum_{u\in\graphv} A(u,v).$
The \emph{graph Laplacian} is defined as $L:=D-A$, where $D$ is the diagonal matrix with $D(v,v)=\dg (v), \,v\in\graphv$. 
With these notions at hand we can introduce yet another concept measuring connectivity.
\begin{defn}
The \emph{algebraic connectivity} of a weighted graph $(\graphg,w,\sigma)$, where we assume that $w$ is summable, is defined by 
\begin{equation}\label{eq:defalgcon}
\lambda_\graphg:=\inf_{\mathbf{z}\neq 0,\, \langle \mathbf{z}, \mathbf{1}\rangle=0} \frac{\mathbf{z}^* L \mathbf{z}}{\|\mathbf{z}\|_{\ell^2(\graphv,w)}^2},
\end{equation}
where $L$ is the graph Laplacian, $\mathbf{1}$ is the vector consisting of ones exclusively, and the inner product $\langle \cdot,\cdot\rangle$ is in $\ell^2(\graphv,w)$.
\end{defn}
Cheeger's inequality connects algebraic connectivity and Cheeger constant of a graph.
For a proof we refer to \cite[Theorem A.1]{cheng2020stable}.
\begin{thm}[Cheeger inequality]
Let $(\graphg,w,\sigma)$ be a weighted graph. 
Then it holds that 
\begin{equation}
    2 h_\graphg \ge \lambda_\graphg \ge \frac{h_\graphg^2}{2\delta_0},
\end{equation}
where $\delta_0:= \sup_{v\in\graphv} \frac{\dg(v)}{w(v)}$.
\end{thm}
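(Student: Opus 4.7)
The strategy is to prove the two inequalities separately, treating the upper bound on $\lambda_\graphg$ as the easy direction and the lower bound as the standard Cheeger-type argument.

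For the upper estimate $\lambda_\graphg \le 2 h_\graphg$, I would plug in a carefully chosen test vector into \eqref{eq:defalgcon}. Given an arbitrary $S\subseteq \graphv$ with $w(S)\le w(\graphv\setminus S)$, set
\begin{equation}
z_v := \begin{cases} w(\graphv\setminus S),& v\in S,\\ -w(S),& v\notin S.\end{cases}
\end{equation}
By construction $\langle \mathbf{z},\mathbf{1}\rangle_{\ell^2(w)}=0$. A direct calculation gives $\mathbf{z}^*L\mathbf{z}=\sigma(\partial S)\cdot w(\graphv)^2$ (since $(z_u-z_v)^2$ vanishes on edges internal to $S$ or to $\graphv\setminus S$) and $\|\mathbf{z}\|_{\ell^2(w)}^2=w(S)w(\graphv\setminus S)w(\graphv)$. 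Using $w(\graphv\setminus S)\ge w(\graphv)/2$ to bound $w(\graphv)/w(\graphv\setminus S)\le 2$, the Rayleigh quotient is at most $2\sigma(\partial S)/w(S)$, and taking the infimum over $S$ gives $\lambda_\graphg\le 2h_\graphg$.

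For the harder direction $\lambda_\graphg\ge h_\graphg^2/(2\delta_0)$, I would run the classical Cheeger argument. Take an optimizer $\mathbf{z}$ of \eqref{eq:defalgcon} and choose a weighted median $c$, so that both $f_+:=(\mathbf{z}-c)_+$ and $f_-:=(\mathbf{z}-c)_-$ have supports of $w$-measure at most $w(\graphv)/2$. The elementary inequality $(a_+-b_+)^2+(a_--b_-)^2\le (a-b)^2$ applied edge-by-edge shows that one of $f:=f_+$ or $f_-$ satisfies
\begin{equation}
\sum_{e=(u,v)\in\graphe} \sigma(e)(f(u)-f(v))^2 \,\le\, \lambda_\graphg \|f\|_{\ell^2(w)}^2,
\end{equation}
while every superlevel set $S_t:=\{v: f(v)^2>t\}$ lies in the support of $f$ and therefore has $\min\{w(S_t),w(\graphv\setminus S_t)\}=w(S_t)$, so $\sigma(\partial S_t)\ge h_\graphg\, w(S_t)$.

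The core estimate is then a Cauchy--Schwarz / co-area argument. Writing $|f(u)^2-f(v)^2|=|f(u)-f(v)|(f(u)+f(v))$ and applying Cauchy--Schwarz,
\begin{equation}
\Bigl(\sum_e \sigma(e)|f(u)^2-f(v)^2|\Bigr)^{\!2} \le \Bigl(\sum_e \sigma(e)(f(u)-f(v))^2\Bigr)\Bigl(\sum_e \sigma(e)(f(u)+f(v))^2\Bigr).
\end{equation}
The second factor is bounded using $(f(u)+f(v))^2\le 2(f(u)^2+f(v)^2)$, which upon collecting per vertex yields $\sum_v \dg(v)f(v)^2\le \delta_0\|f\|_{\ell^2(w)}^2$, so that the factor is at most $2\delta_0\|f\|_{\ell^2(w)}^2$. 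For the left-hand side, the layer-cake identity $f(v)^2=\int_0^\infty \mathbbm{1}_{\{f(v)^2>t\}}\,dt$ gives
\begin{equation}
\sum_e \sigma(e)|f(u)^2-f(v)^2|=\int_0^\infty \sigma(\partial S_t)\,dt \ge h_\graphg\int_0^\infty w(S_t)\,dt = h_\graphg\,\|f\|_{\ell^2(w)}^2.
\end{equation}
Combining the two and cancelling $\|f\|_{\ell^2(w)}^4$ produces $h_\graphg^2\le 2\delta_0\lambda_\graphg$, as claimed.

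The main obstacle is the hard direction, and within it the truncation step: one must verify that after replacing $\mathbf{z}$ by $f_\pm$ the Dirichlet form is still controlled by $\lambda_\graphg\|f_\pm\|_{\ell^2(w)}^2$, and that the chosen threshold $c$ simultaneously keeps both supports below half of $w(\graphv)$ so that the Cheeger constant may be invoked. The rest is essentially bookkeeping inside the Cauchy--Schwarz/co-area identity, whose weighted-graph version is unchanged from the unweighted case.
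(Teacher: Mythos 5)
Your proof is correct. Note that the paper does not actually prove this theorem itself --- it defers to Theorem A.1 of the cited work of Cheng \etal\ --- so there is no in-paper argument to compare against; what you give is the standard weighted-graph Cheeger proof, and both halves check out against the paper's conventions (in particular the $\ell^2(\graphv,w)$ normalization in \eqref{eq:defalgcon} and the constraint $\langle \mathbf{z},\mathbf{1}\rangle_{\ell^2(w)}=0$). The only step you leave implicit is why one of $f_\pm$ inherits the Rayleigh-quotient bound: your edge-by-edge inequality gives
\begin{equation}
\sum_{e}\sigma(e)\bigl[(f_+(u)-f_+(v))^2+(f_-(u)-f_-(v))^2\bigr]\le \mathbf{z}^*L\mathbf{z}=\lambda_\graphg\|\mathbf{z}\|_{\ell^2(w)}^2,
\end{equation}
and one must combine this with $\|f_+\|_{\ell^2(w)}^2+\|f_-\|_{\ell^2(w)}^2=\|\mathbf{z}-c\mathbf{1}\|_{\ell^2(w)}^2\ge\|\mathbf{z}\|_{\ell^2(w)}^2$, which holds precisely because $\mathbf{z}\perp\mathbf{1}$ in $\ell^2(\graphv,w)$; then at least one of the two ratios is at most $\lambda_\graphg$. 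Together with the existence of an optimizer (the graphs in question are finite), this closes the argument.
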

Next, we set up the graph corresponding to recovering the phase information of a concrete signal $f$ at hand.
\begin{defn}
Let $f\in L^2(\R)$ and suppose $\mathcal{Q}$ is a finite collection of squares of side length $1$.
 With $(f,\mathcal{Q})$ we associate a finite graph denoted by 
 $\graphg=\graphg_{f,\mathcal{Q}}=(\graphv,\graphe)$, where the vertex set $\graphv$ is identified with $\mathcal{Q}$, i.e.
 $\graphv\simeq \mathcal{Q}$, and which is fully connected, i.e. $\graphe=\graphv\times\graphv.$
Moreover, to obtain a weighted graph we assign weights to both the vertices and the edges by defining \footnote{here we implicitly identify $v\sim Q$ and $v'\sim Q'$}
\begin{equation}
 w(v):= \|\G f\|_{L^2(Q)}^2 = \|\Sp f\|_{L^1(Q)}, \quad v \in \graphv,
\end{equation}
as well as
\begin{equation}\label{def:sigmagraph}
 \sigma(e):= \|\G f\|_{L^2(Q\cap Q')}^4= \|\Sp f\|_{L^1(Q\cap Q')}^2, \quad e=(v,v')\in \graphe.
\end{equation} 
\end{defn}

Now that we have settled the terminology and concepts required, we are able to formulate the main result of the present section.
\begin{thm}\label{thm:generalcase}
 Let $f\in L^2(\R)$ and let $\mathcal{Q}$ be a finite family of squares of side length $1$, and set $\Omega:=\bigcup_{Q \in\mathcal{Q}} Q$.
 There exists a universal constant $C>0$ (independent from $f,\mathcal{Q}$) such that for all $g\in L^2(\R)$ it holds that
 \begin{multline}
  \min_{\tau\in\torus} \|\G g- \tau \G f\|_{L^2(\Omega)} \\ \le C 
  \left(KM^{1/2}L^{1/2} + \lambda^{-1}K\nu^{3/2}L^{1/2}+ \vol(\Omega)^{1/2}\right)^{1/2} 
  \|\Sp f - \Sp  g\|_{L^2(\Omega)}^{1/2},
 \end{multline}
 where $\lambda$ denotes the algebraic connectivity of the graph associated to $(f,\mathcal{Q})$, $\nu=|\mathcal{Q}|$ and where
 \begin{gather*}
      M=M(\Sp f,\mathcal{Q}):= \sum_{Q\in\mathcal{Q}} \|\Sp f\|_{L^1(Q)}^{-2},\\
      L=L(\mathcal{Q}):= \|\sum_{Q\in \mathcal{Q}} \mathbbm{1}_Q \|_{L^\infty(\R^2)},\\
      K=K(\Sp f, \Sp g):= \|\Sp f\|_{L^\infty(\R^2)} + \|\Sp g\|_{L^\infty(\R^2)}.
 \end{gather*}
\end{thm}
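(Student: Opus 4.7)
The plan is to follow the Cheng \etal template sketched in the paragraph after Theorem \ref{thm:generalcase}: first extract a local phase $\tau_v \in \T$ on every vertex $v \in \graphv$ (identified with $Q \in \mathcal{Q}$) via Corollary \ref{cor:basicstabgabor}, then synchronize the $\tau_v$'s into a single global phase using the algebraic connectivity of $\graphg$. For the first step, for each $v$ I would apply Corollary \ref{cor:basicstabgabor} with $r = \sqrt{2}/2$ so that $Q \subseteq B_r(z_Q)$; squaring the conclusion yields $\tau_v \in \T$ with
\[
\beta_v^2 := \|\G g - \tau_v \G f\|_{L^2(Q)}^2 \lesssim \frac{K}{w(v)} \|\Sp f - \Sp g\|_{L^2(Q)}.
\]
A single Cauchy--Schwarz step combined with $\sum_{Q \in \mathcal{Q}} \mathbbm{1}_Q \le L$ then gives $\sum_v \beta_v^2 \lesssim K M^{1/2} L^{1/2} \|\Sp f - \Sp g\|_{L^2(\Omega)}$, which accounts for the $K M^{1/2} L^{1/2}$ summand in the theorem.

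Next, to control the phase mismatch on overlaps, for each edge $e = (v,v')$ I would apply the triangle inequality $|\tau_v - \tau_{v'}| \|\G f\|_{L^2(Q \cap Q')} \le \|\G g - \tau_v \G f\|_{L^2(Q)} + \|\G g - \tau_{v'} \G f\|_{L^2(Q')}$, obtaining $|\tau_v - \tau_{v'}|^2 \|\G f\|_{L^2(Q \cap Q')}^2 \le 2(\beta_v^2 + \beta_{v'}^2)$. The \emph{crucial} observation is the monotonicity $\|\G f\|_{L^2(Q \cap Q')}^2 \le \min\{w(v), w(v')\}$: multiplying by this factor precisely cancels the $w(v)^{-1}$ denominator in $\beta_v^2$ and yields
\[
\sigma(e) |\tau_v - \tau_{v'}|^2 \lesssim K\bigl(\|\Sp f - \Sp g\|_{L^2(Q)} + \|\Sp f - \Sp g\|_{L^2(Q')}\bigr).
\]
Summing over all $\nu^2$ edges and invoking Cauchy--Schwarz yields a bound on the Dirichlet form of the graph Laplacian,
\[
\tfrac{1}{2} \sum_e \sigma(e) |\tau_v - \tau_{v'}|^2 \lesssim K \nu^{3/2} L^{1/2} \|\Sp f - \Sp g\|_{L^2(\Omega)}.
\]

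Finally, the definition of algebraic connectivity \eqref{eq:defalgcon} applied to $\mathbf{t} := (\tau_v)_v$ with weighted mean $\bar{\tau} := W^{-1} \sum_v w(v) \tau_v$ (where $W := \sum_v w(v)$) converts the Dirichlet-form bound into the variance estimate $V := \sum_v w(v) |\tau_v - \bar\tau|^2 \lesssim \lambda^{-1} K \nu^{3/2} L^{1/2} \|\Sp f - \Sp g\|_{L^2(\Omega)}$. The main obstacle is then the \emph{phase projection}: the mean $\bar\tau$ is a convex combination of unimodular numbers and thus generally lies in the open unit disk, so one cannot simply take it as the global phase. I would resolve this via the identity $V = W(1 - |\bar\tau|^2)$, which implies $W(1 - |\bar\tau|)^2 \le V$, and set $\tau := \bar\tau/|\bar\tau|$ (choosing any $\tau \in \T$ in the degenerate case $\bar\tau = 0$); an orthogonal expansion then shows $\min_{\tau \in \T} \sum_v w(v) |\tau - \tau_v|^2 \le 2V$. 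Putting everything together via the decomposition
\[
\|\G g - \tau \G f\|_{L^2(\Omega)}^2 \le \sum_v \|\G g - \tau \G f\|_{L^2(Q)}^2 \le 2\sum_v \beta_v^2 + 2 \sum_v w(v) |\tau - \tau_v|^2,
\]
bounding each sum by the estimates above, and including $\vol(\Omega)^{1/2}$ as a trivial safeguard (via $\|\G g - \tau \G f\|_{L^2(\Omega)}^2 \lesssim K \vol(\Omega)$) to cover degenerate cases where $\lambda = 0$ or $\bar\tau = 0$ renders the algebraic-connectivity term vacuous, taking square roots delivers the claimed inequality.
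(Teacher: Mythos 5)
Your proposal is correct, and it follows the same overall Cheng-et-al template as the paper (local stability on each square, Dirichlet-form bound on overlaps via the monotonicity $\|\G f\|_{L^2(Q\cap Q')}^2\le\min\{w(v),w(v')\}$, synchronization through the algebraic connectivity, and the same Cauchy--Schwarz steps producing $KM^{1/2}L^{1/2}$ and $K\nu^{3/2}L^{1/2}$). The one genuine divergence is how the torus constraint is handled. The paper takes the local scalars $z_v$ to be minimizers over all of $\C$, so that the weighted mean $c_0$ is an admissible test vector for \eqref{eq:defalgcon} without further ado, and only at the very end converts $\min_{c\in\C}$ back to $\min_{\tau\in\T}$ via Lemma \ref{lem:dropconstraint}; this conversion is what generates the extra term $\||\G g|-|\G f|\|_{L^2(\Omega)}^2$ and hence the $\vol(\Omega)^{1/2}$ summand. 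You instead keep $\tau_v\in\T$ throughout and project the weighted mean $\bar\tau$ back onto the circle using $V=W(1-|\bar\tau|^2)$ and the orthogonal expansion $\sum_v w(v)|\tau-\tau_v|^2=W|\tau-\bar\tau|^2+V\le 2V$; these identities are correct (including the degenerate case $\bar\tau=0$, where the sum equals $2W=2V$ for every $\tau\in\T$), so your route actually dispenses with Lemma \ref{lem:dropconstraint} entirely and yields the theorem \emph{without} the $\vol(\Omega)^{1/2}$ term, i.e.\ a marginally sharper constant. Two small remarks: the ``safeguard'' you append, $\|\G g-\tau\G f\|_{L^2(\Omega)}^2\lesssim K\vol(\Omega)$, does not have the form $\vol(\Omega)^{1/2}\|\Sp f-\Sp g\|_{L^2(\Omega)}$ demanded by the statement, so it could not serve as a fallback as written --- but none is needed, since when $\lambda=0$ the right-hand side is infinite and the claim is vacuous; and your application of Corollary \ref{cor:basicstabgabor} with $r=\sqrt2/2$ is fine but slightly roundabout, as Theorem \ref{thm:specialcasebasicgabor} already applies directly to each unit square.
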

\begin{rem}
The main reason why we cannot directly apply the results established by Cheng \etal \cite{cheng2020stable}, is the absence of uniform stability of the local phase reconstruction process in our setting.
However, Theorem \ref{thm:specialcasebasicgabor} allows us to control 
the local stability constant by $\|\Sp f\|_{L^1(Q)}^{-1/2}=\|\G f\|_{L^2(Q)}^{-1}$, essentially.
We contain the lack of local uniform stability by defining the edge weights slightly different as compared to \cite{cheng2020stable}: 
By using \eqref{def:sigmagraph}, instead of $\sigma(e)=\|\G f\|_{L^2(u\cap v)}^2$, we compensate for the dependence of the local stability on the energy of the signal on the respective square.
\end{rem}
By making use of Cheeger inequality we may rewrite Theorem \ref{thm:generalcase} in terms of the Cheeger constant of the associated graph.
\begin{thm}\label{thm:generalcasecheeger}
Let $f\in L^2(\R)$ and let $\mathcal{Q}$ be a finite collection of unit squares.
Let $\Omega, \nu, M, L, K$ be defined as in Theorem \ref{thm:generalcase}.
Let $h=h_\graphg$ be the Cheeger constant of graph associated with $(f,\mathcal{Q})$ and $\delta_0:=\max_{v\in\graphv} \dg(v)/w(v).$\\
There exists a universal constant $C>0$ such that for all $g\in L^2(\R)$ it holds that
\begin{multline}
    \min_{\tau\in\torus} \|\G g- \tau \G f\|_{L^2(\Omega)} \\\le C  \left(KM^{1/2}L^{1/2} + h^{-2} \delta_0 K\nu^{3/2}L^{1/2}+ \vol(\Omega)^{1/2}\right)^{1/2}
  \|\Sp f - \Sp  g\|_{L^2(\Omega)}^{1/2}.
\end{multline}
\end{thm}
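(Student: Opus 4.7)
The plan is to derive this theorem as an immediate corollary of Theorem \ref{thm:generalcase} by invoking the Cheeger inequality stated earlier in this section. Both statements share exactly the same hypotheses, the same volume term, and the same first summand $KM^{1/2}L^{1/2}$ inside the bracket; the only discrepancy is the middle summand, where $\lambda^{-1}$ in Theorem \ref{thm:generalcase} is replaced by $\delta_0 h^{-2}$ here. Consequently, no new analytical work is required, only a purely algebraic substitution.

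Concretely, I would apply the Cheeger inequality $\lambda_\graphg \ge h_\graphg^2/(2\delta_0)$ in its rearranged form $\lambda_\graphg^{-1} \le 2\delta_0\, h_\graphg^{-2}$. Substituting this bound into the middle term of the bracketed quantity appearing on the right-hand side of Theorem \ref{thm:generalcase} yields
$$KM^{1/2}L^{1/2} + \lambda^{-1} K\nu^{3/2}L^{1/2} + \vol(\Omega)^{1/2} \le KM^{1/2}L^{1/2} + 2\delta_0 h^{-2} K\nu^{3/2}L^{1/2} + \vol(\Omega)^{1/2}.$$
Taking the overall square root of the bracket preserves the inequality, and the extra $\sqrt{2}$ produced by the substitution can be absorbed into the universal constant $C$ inherited from Theorem \ref{thm:generalcase}. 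Multiplying by $\|\Sp f - \Sp g\|_{L^2(\Omega)}^{1/2}$ then delivers exactly the asserted estimate.

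I do not anticipate any genuine obstacle in this step: the substantive work, which combines the local phase reconstruction bound of Corollary \ref{cor:basicstabgabor} with the graph Laplacian machinery adapted from Cheng \etal~\cite{cheng2020stable}, is already encapsulated in Theorem \ref{thm:generalcase}. Theorem \ref{thm:generalcasecheeger} should therefore be viewed as a geometric reformulation of that result, trading the somewhat abstract algebraic connectivity $\lambda_\graphg$ for the more intuitive isoperimetric quantity $h_\graphg$ at the cost of picking up the vertex-degree-to-weight ratio $\delta_0$.
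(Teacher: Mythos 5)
Your proposal is correct and matches the paper's own derivation exactly: Theorem \ref{thm:generalcasecheeger} is obtained from Theorem \ref{thm:generalcase} by substituting the lower bound $\lambda_\graphg \ge h_\graphg^2/(2\delta_0)$ from the Cheeger inequality, with the factor $2$ absorbed into the universal constant. No further comment is needed.
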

To establish Theorem \ref{thm:generalcase} we closely follow the proof methods in \cite{cheng2020stable}.
We collect a couple of auxiliary results.
The first one allows us to drop the constraint in the similarity measure.
\begin{lem}\label{lem:dropconstraint}
 For all $f,g\in L^2(\R)$ and measurable $D\subseteq \R^2$ we have that 
 \begin{equation}
  \min_{\tau\in\torus} \|\G g -\tau \G f\|_{L^2(D)} \le \sqrt{2} \min_{c\in\C} \| \G g- c\G f\|_{L^2(D)} + \||\G g|-|\G f|\|_{L^2(D)}. 
 \end{equation}
\end{lem}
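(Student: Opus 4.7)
The plan is to reduce the problem to a one-variable estimate comparing the optimum over the unit circle with the unconstrained complex optimum. Abbreviate $F=\G f$, $G=\G g$, and every norm/inner product as $\|\cdot\|$ and $\langle\cdot,\cdot\rangle$ in $L^2(D)$. The case $\|F\|=0$ is trivial (pick $\tau=1$, noting $\|G\|=\||G|\|=\||G|-|F|\|$), so assume $\|F\|>0$ and let $c^\star=\langle G,F\rangle/\|F\|^2$ be the minimiser of $\|G-cF\|$ over $c\in\C$. Write $c^\star=r e^{\mi\theta}$ with $r=|c^\star|\ge 0$ and set $\tau^\star=e^{\mi\theta}$.

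The first step uses that $c^\star F$ is the orthogonal projection of $G$ onto $\mathrm{span}(F)$: since $G-c^\star F\perp F$, a direct expansion gives the Pythagorean identity
\begin{equation}
\|G-\tau^\star F\|^2 \;=\; \|G-c^\star F\|^2 \;+\; (r-1)^2\|F\|^2.
\end{equation}
This reduces the whole problem to bounding $|r-1|\|F\|$. Rewriting the projection identity as $\|G\|^2=\|G-c^\star F\|^2+r^2\|F\|^2$ gives $r\|F\|=\sqrt{\|G\|^2-\|G-c^\star F\|^2}$. Setting $a=\|F\|$, $b=\|G\|$, $d=\|G-c^\star F\|$ (so $0\le d\le b$), one checks the elementary inequality
\begin{equation}
\bigl|a-\sqrt{b^2-d^2}\,\bigr| \;\le\; |a-b|+d,
\end{equation}
which follows from $0\le b-\sqrt{b^2-d^2}\le d$ (equivalent to $(b-d)^2\le b^2-d^2$, i.e.\ $d\le b$) together with the triangle inequality. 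Combining with the trivial $\bigl|\|F\|-\|G\|\bigr|=\bigl|\,\||F|\|-\||G|\|\,\bigr|\le \||G|-|F|\|$ yields
\begin{equation}
|r-1|\|F\| \;\le\; \|G-c^\star F\| + \||G|-|F|\|.
\end{equation}

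Finally, plug this into the Pythagorean identity. With $\alpha=\|G-c^\star F\|$ and $\beta=\||G|-|F|\|$,
\begin{equation}
\min_{\tau\in\T}\|G-\tau F\|^2 \;\le\; \alpha^2+(\alpha+\beta)^2 \;=\; 2\alpha^2+2\alpha\beta+\beta^2 \;\le\; \bigl(\sqrt{2}\,\alpha+\beta\bigr)^2,
\end{equation}
where the last inequality is $2\alpha\beta\le 2\sqrt{2}\,\alpha\beta$. Taking square roots yields the claim with the stated constants $\sqrt 2$ and $1$. There is no real obstacle here; the only mild care needed is the elementary square-root inequality above, and the observation that the cross term in the Pythagorean identity vanishes by orthogonality (which is precisely why the constant $\sqrt 2$, rather than $2$, is attainable).
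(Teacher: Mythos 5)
Your proof is correct, and it is essentially the argument the paper invokes by reference to Lemma 4.2 of Cheng et al.: project $G$ onto $\mathrm{span}(F)$, use orthogonality to isolate the radial discrepancy $|r-1|\,\|F\|$, and bound it via $\bigl|\|F\|-\|G\|\bigr|\le \||G|-|F|\|$ together with the elementary square-root inequality, which is exactly where the constant $\sqrt{2}$ comes from. The only added value here is that you have written out the self-contained proof the paper omits, including the degenerate cases $\|F\|=0$ and $c^\star=0$.
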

\begin{proof}
 We recycle the proof of \cite[Lemma 4.2]{cheng2020stable}.
 One just needs to replace $\ell^2$ norms and inner product with the correct $L^2$ norms and inner products, respectively.
\end{proof}
Next, we have a closer look at the overlap of two components.
This is where it becomes apparent, why we have defined the edge weight $\sigma$ as in \eqref{def:sigmagraph}.
\begin{lem}\label{lem:overlap}
 Let $f,g\in L^2(\R)$ and let $Q_1,Q_2\subseteq\R^2$ be two squares, each of sidelength $1$.
 Moreover, for $j\in\{1,2\}$ let $c_j\in\mathbb{T}$ be a minimizer of $\|\G g- c_j \G f\|_{L^2(Q_j)}$.
 Then it holds that 
 \begin{equation}
  |c_1-c_2|^2 \|\G f\|_{L^2(Q_1\cap Q_2)}^4 \lesssim 
  \left(\|\Sp f\|_{L^\infty(\R^2)} + \|\Sp g\|_{L^\infty(\R^2)}\right) 
  \cdot \|\Sp f - \Sp g\|_{L^2(Q_1\cup Q_2)}.
 \end{equation}
\end{lem}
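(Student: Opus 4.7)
The plan is to combine a triangle inequality on the overlap $Q_1 \cap Q_2$ with the local stability estimate from Corollary \ref{cor:basicstabgabor}. Since $c_j$ is a minimizer of $\|\G g - c \G f\|_{L^2(Q_j)}$ over $c \in \torus$, applying the triangle inequality in $L^2(Q_1 \cap Q_2)$ gives
$$|c_1 - c_2| \cdot \|\G f\|_{L^2(Q_1 \cap Q_2)} \le \|\G g - c_1 \G f\|_{L^2(Q_1)} + \|\G g - c_2 \G f\|_{L^2(Q_2)},$$
which reduces the task to controlling each local residual on its respective square.

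To bound these residuals I would invoke Corollary \ref{cor:basicstabgabor}. Denoting the center of $Q_j$ by $z_j$, one has $Q_j \subseteq B_{\sqrt{2}/2}(z_j)$, and so the minimum of $\|\G g - \tau \G f\|_{L^2(Q_j)}$ over $\tau \in \torus$ is dominated by the corresponding minimum over the larger disk. Applying the corollary with $r = \sqrt{2}/2$, absorbing the resulting universal constants, and using the trivial lower bound $\|\Sp f\|_{L^1(B_{\sqrt{2}/2}(z_j))} \ge \|\Sp f\|_{L^1(Q_j)}$ to weaken the denominator produces
$$\|\G g - c_j \G f\|_{L^2(Q_j)}^2 \lesssim \frac{\|\Sp f\|_{L^\infty(\R^2)} + \|\Sp g\|_{L^\infty(\R^2)}}{\|\Sp f\|_{L^1(Q_j)}} \cdot \|\Sp f - \Sp g\|_{L^2(Q_j)}.$$
Squaring the triangle inequality above and plugging in this bound for $j=1,2$ yields
$$|c_1 - c_2|^2 \|\G f\|_{L^2(Q_1 \cap Q_2)}^2 \lesssim \bigl(\|\Sp f\|_{L^\infty(\R^2)} + \|\Sp g\|_{L^\infty(\R^2)}\bigr) \sum_{j=1,2} \frac{\|\Sp f - \Sp g\|_{L^2(Q_j)}}{\|\Sp f\|_{L^1(Q_j)}}.$$

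The final step is to multiply both sides by $\|\G f\|_{L^2(Q_1 \cap Q_2)}^2 = \|\Sp f\|_{L^1(Q_1 \cap Q_2)}$, which simultaneously raises the exponent on the left-hand side from $2$ to $4$ and clears the denominators through the elementary inclusion $Q_1 \cap Q_2 \subseteq Q_j$, giving $\|\Sp f\|_{L^1(Q_1 \cap Q_2)}/\|\Sp f\|_{L^1(Q_j)} \le 1$. Finally, bounding $\|\Sp f - \Sp g\|_{L^2(Q_1)} + \|\Sp f - \Sp g\|_{L^2(Q_2)} \le 2\|\Sp f - \Sp g\|_{L^2(Q_1 \cup Q_2)}$ delivers the claimed estimate.

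I do not anticipate any serious obstacle. The only point requiring a little care is the invocation of Corollary \ref{cor:basicstabgabor}, which is formulated with an $L^2$ norm over a disk on the left-hand side; this is handled by enclosing $Q_j$ in its circumscribed disk of radius $\sqrt{2}/2$. The decisive algebraic manoeuvre is the last multiplication by $\|\Sp f\|_{L^1(Q_1 \cap Q_2)}$: it is precisely this step that motivates taking the \emph{squared} $L^1$ norm as the edge weight $\sigma(e) = \|\Sp f\|_{L^1(Q \cap Q')}^2$ in \eqref{def:sigmagraph}, a choice that compensates for the lack of uniform local stability.
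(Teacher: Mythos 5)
Your argument is correct and follows essentially the same route as the paper's proof: a triangle inequality on $Q_1\cap Q_2$ enlarged to $Q_1$ and $Q_2$, the local stability estimate of Theorem \ref{thm:specialcasebasicgabor} on each square, and the observation $\|\Sp f\|_{L^1(Q_1\cap Q_2)}\le\|\Sp f\|_{L^1(Q_j)}$ to clear the denominators after multiplying by the extra factor of $\|\G f\|_{L^2(Q_1\cap Q_2)}^2$. The only cosmetic differences are your detour through Corollary \ref{cor:basicstabgabor} with the circumscribed disk (the paper cites Theorem \ref{thm:specialcasebasicgabor} directly) and your simpler bound $\|\Sp f-\Sp g\|_{L^2(Q_j)}\le\|\Sp f-\Sp g\|_{L^2(Q_1\cup Q_2)}$ in place of the paper's $(a^{1/2}+b^{1/2})^2\asymp(a^2+b^2)^{1/2}$ step.
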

\begin{proof}
 First rewrite the term to be bounded
 \begin{equation}
   |c_1-c_2|^2 \cdot \||\G f\|_{L^2(Q_1\cap Q_2)}^4 
            = \|(c_1-c_2)\G f\|_{L^2(Q_1\cap Q_2)}^2 \cdot \|\G f\|_{L^2(Q_1\cap Q_2)}^2.
 \end{equation}
The square root of the first factor can be estimated by making use of the local stability result, Theorem \ref{thm:specialcasebasicgabor}.
\begin{equation}
\begin{aligned}
 \|(c_1-c_2)\G f\|_{L^2(Q_1\cap Q_2)} 
        &\le \|\G g- c_1\G f\|_{L^2(Q_1\cap Q_2)} + \|\G g- c_2\G f\|_{L^2(Q_1\cap Q_2)}\\
        &\le  \|\G g- c_1\G f\|_{L^2(Q_1)} + \|\G g- c_2\G f\|_{L^2(Q_2)}\\
        &\lesssim \left(\frac{\|\Sp f\|_{L^\infty(\R^2)} + \|\Sp g\|_{L^\infty(\R^2)}}{\min\left\{\|\Sp f\|_{L^1(Q_1)},\|\Sp f\|_{L^1(Q_2)}\right\}} \right)^{1/2} \\
        &\quad\times \left(\|\Sp f-\Sp g\|_{L^2(Q_1)}^{1/2} + \|\Sp f-\Sp g\|_{L^2(Q_2)}^{1/2} \right).
 \end{aligned}
\end{equation}
Next, observe that
\begin{equation}
 \|\G f\|_{L^2(Q_1\cap Q_2)}^2 = \|\Sp f\|_{L^1(Q_1\cap Q_2)} \le \min\{\|\Sp f\|_{L^1(Q_1)}, \|\Sp f\|_{L^1(Q_2)}\}.
\end{equation}
Moreover, since for $a,b\ge 0$ it holds that $(a^{1/2}+b^{1/2})^2 \asymp (a^2+b^2)^{1/2}$ we have that 
\begin{equation}
 \left(\|\Sp f-\Sp g\|_{L^2(Q_1)}^{1/2} + \|\Sp f-\Sp g\|_{L^2(Q_2)}^{1/2} \right)^2 \asymp \|\Sp f - \Sp g\|_{L^2(Q_1\cup Q_2)},
\end{equation}
which yields the desired estimate.
\end{proof}
We now turn towards the 
\begin{proof}[Proof of Theorem \ref{thm:generalcase}]
Let $\mathbf{z}=(z_v)_{v\in\graphv}$ be such that $z_v$ is a minimizer of $\zeta\in\C\mapsto\|\G g- \zeta \G f\|_{L^2(v)}$.\\

Let $c_0\in C$ be arbitrary but fixed. 
It follows from Lemma \ref{lem:dropconstraint} that 
\begin{equation}
\begin{aligned}
 \min_{\tau\in\torus} \|\G g- \tau \G f\|_{L^2(\Omega)}^2 
        &\lesssim \min_{c\in\C} \|\G g- c \G f\|_{L^2(\Omega)}^2 + \||\G g|-|\G f|\|_{L^2(\Omega)}^2\\
        &\le \|\G g- c_0\G f\|_{L^2(\Omega)}^2 +  \||\G g|-|\G f|\|_{L^2(\Omega)}^2.
\end{aligned}
\end{equation}
We proceed with estimating the first term on the right hand side and get that
\begin{equation}
 \begin{aligned}
  \|\G g- c_0\G f\|_{L^2(\Omega)}^2 
        &\le \sum_{v\in\graphv} \|\G g - c_0 \G f\|_{L^2(v)}^2 \\
        &\le \sum_{v\in\graphv} \|\G g -z_v \G f + z_v \G f - c_0\G f\|_{L^2(v)}^2\\
        &\lesssim \sum_{v\in\graphv} \left( \|\G g -z_v\G f\|_{L^2(v)}^2 + |z_v -c_0|^2 w(v)\right)
 \end{aligned}
\end{equation}
Thus we have for arbitrary $c_0\in \C$ that
\begin{multline}\label{est:3termstobebdd}
 \min_{\tau\in\torus}\|\G g- \tau \G f\|_{L^2(\Omega)}^2\\
 \lesssim 
 \sum_{v\in\graphv} \|\G g-z_v \G f\|_{L^2(v)}^2 
 + \sum_{v\in\graphv}|z_v-c_0|^2 w(v) 
 + \||\G g|-|\G f|\|_{L^2(\Omega)}^2
\end{multline}
We proceed by estimating each of the terms on the right hand side of \eqref{est:3termstobebdd}, one after another.\\

\textbf{First term:} We apply Theorem \ref{thm:specialcasebasicgabor} on each square and use Cauchy-Schwarz, to get
\begin{equation}
 \begin{aligned}
  \sum_{v\in\graphv} \|\G g- z_v\G f\|_{L^2(v)}^2 
  &\lesssim K \sum_{v\in\graphv} \|\Sp f\|_{L^1(v)}^{-1} \cdot \|\Sp f - \Sp g\|_{L^2(v)}\\
  &\le K \left(\sum_{v\in\graphv}\|\Sp f\|_{L^1(v)}^{-2} \right)^{1/2} \cdot \left( \sum_{v\in\graphv} \|\Sp f - \Sp g\|_{L^2(v)}^2 \right)^{1/2}\\
  &\le K M^{1/2} L^{1/2} \cdot \|\Sp f - \Sp g\|_{L^2(\Omega)}.
 \end{aligned}
\end{equation}
\textbf{Second term:}
First note that it follows from the Definition of the algebraic connectivity \eqref{eq:defalgcon} that 
\begin{equation}
 \sum_{v\in\graphv} |\mathbf{\zeta}_v|^2 w(v) \le \lambda^{-1} \cdot \mathbf{\zeta}^* L \mathbf{\zeta}, \quad \forall \mathbf{\zeta}\in \ell^2(\graphv,w)\setminus\{0\}: \langle\mathbf{\zeta},\mathbf{1}\rangle=0.
\end{equation}
In particular by picking $c_0:=\langle \mathbf{z},\mathbf{1}\rangle / \langle \mathbf{1},\mathbf{1}\rangle$, we can apply the above inequality on $\mathbf{\zeta}:=\mathbf{z}-c_0\mathbf{1}$. We therefore get that 
\begin{equation}
 \sum_{v\in\graphv}|z_v-c_0|^2 w(v) = \sum_{v\in\graphv} |\zeta_v|^2 w(v) \le \lambda^{-1} \cdot \mathbf{\zeta}^*L\mathbf{\zeta}.
\end{equation}
Since the adjacency matrix is symmetric
and with the help of Lemma \ref{lem:overlap}
we further get that 
\begin{equation}
\begin{aligned}
 \mathbf{\zeta}^* L \mathbf{\zeta} 
        &= \frac12 \sum_{u,v\in \graphv} |\zeta_u - \zeta_v|^2 \sigma((u,v))\\
        &= \frac12 \sum_{u,v\in \graphv} |z_u - z_v|^2 \sigma((u,v))\\
        &\lesssim K \sum_{u,v\in\graphv} \|\Sp f - \Sp g\|_{L^2(u\cup v)}\\
        &\le K \left(\sum_{u,v\in\graphv} 1 \right)^{1/2} \cdot \left( \sum_{u,v\in\graphv} \|\Sp f- \Sp g\|_{L^2(u\cup v)}^2\right)^{1/2}\\
        &= K \nu \left(  \int\limits_\Omega (\Sp f - \Sp g)^2 \left(\sum_{u,v\in\graphv} \mathbbm{1}_{u\cup v} \right) \,dA(z)\right)^{1/2}\\
        &\lesssim K \nu^{3/2} L^{1/2} \|\Sp f- \Sp g\|_{L^2(\Omega)} 
 \end{aligned}
\end{equation}
where we used that
\begin{equation}
 \big\|\sum_{u,v\in\graphv} \mathbbm{1}_{u\cup v} \big\|_{L^\infty(\R^2)}
 \le \big\|\sum_{u,v\in\graphv} \mathbbm{1}_u + \mathbbm{1}_v\big\|_{L^\infty(\R^2)} 
 \le 2\nu L.
\end{equation}

\textbf{Third term:}
The last term can be bounded as follows.
\begin{equation}
 \begin{aligned}
  \||\G f| - |\G g|\|_{L^2(\Omega)}^2 
        &= \int\limits_\Omega \left( |\G f|- |\G g|\right)^2 
        \cdot \frac{(|\G f|+|\G g|)^2}{(|\G f|+|\G g|)^2}\\
        &= \int\limits_\Omega \frac{(\Sp f - \Sp g)^2}{(|\G f|+ |\G g|)^2}\\
        &\le \|\Sp f - \Sp g\|_{L^2(\Omega)} \cdot \left( \int\limits_\Omega \frac{(\Sp f -\Sp g)^2}{(|\G f|+ |\G g|)^4}\right)^{1/2}\\
        &\lesssim \vol(\Omega)^{1/2} \|\Sp f- \Sp g\|_{L^2(\Omega)}.
 \end{aligned}
\end{equation}
Collecting the esimates gives the desired result
\begin{multline}
 \min_{\tau\in\torus} \|\G g- \tau \G f\|_{L^2(\Omega)}^2 \lesssim \left(KM^{1/2}L^{1/2} + \lambda^{-1} K \nu^{3/2}L^{1/2} + \vol(\Omega)^{1/2} \right)\\ \times  \|\Sp f- \Sp g\|_{L^2(\Omega)}.
\end{multline}

\end{proof}

\section{Discrete measurements}\label{sec:discretization}
We consider now the case where we only have access to the spectrogram on a discrete set of sampling points. 
More precisely, we aim to replace the 
term $\|\Sp f - \Sp g\|_{L^2(\Omega)}$ appearing on the right hand side of our fully continuous measurement setting from Section \ref{sec:fullycontinuous} 
by 
\begin{equation}
 \|\Sp f- \Sp g\|_{\ell^2_w(\Lambda)} = \left(\sum_{\lambda\in\Lambda} (\Sp f(\lambda) -\Sp g(\lambda))^2 w_\lambda \right)^{1/2}
\end{equation}
for a suitable set of sampling points $\Lambda$ with non-negative weights $(w_\lambda)_{\lambda\in\Lambda}.$
Since $\|\Sp f- \Sp g\|_{L^2}^2$ can be realized as the integral of a smooth function, utilization of cubature rules appears to be a promising strategy.
Following the pattern from Section \ref{sec:fullycontinuous} we will again work with squares as basic building blocks.
Throughout this section we will denote the square of side length $2s$ centered at the origin by $Q_s$, i.e. $Q_s=[-s,s]^2$.
Moreover, we will use $R_{a,b}=[-a,a]\times[-b,b]$ for rectangles.

\subsection{Product Gauss cubature rules for Analytic functions}
The integration schemes which we will employ arise from univariate Gauss rules.
\begin{defn}
For $s>0$ and $N\in \N$ let $(\gamma_k)_{k=1}^N$ denote the nodes of the $N$ point Gauss quadrature rule on $[-s,s]$, and let $(\omega_k)_{k=1}^N$ denote the corresponding weights.
The \emph{Product Gauss cubature rule} of degree $N$ on $Q_s$ is then defined as the scheme with node set 
\begin{equation}
    \Lambda=\Lambda^{(N,s)}:= \left\{ (\gamma_k,\gamma_l):\, 1\le k,l\le N\right\}
\end{equation}
with associated weights $w_\lambda=w_\lambda^{(N,s)}:=\omega_k \omega_l$.
\end{defn}

Given a continuous function $\phi$ on $Q_s$ we will define the integration error by 
\begin{equation}
    \mathcal{E}(\phi)= \mathcal{E}^{(N,s)}(\phi):= \iint\limits_{Q_s} \phi(x,y)\,dxdy - \sum_{\lambda\in \Lambda^{(N,s)}} \phi(\lambda) w_\lambda.
\end{equation}
If $Q$ is an arbitrary square of side length $s$ in the plane and $\phi$ a function defined on $Q$,
the product rule can be applied to $\phi$ by pulling the function back to $Q_s$ by virtue of an affine transformation $T$ with unit determinant.
In this case we will denote $\mathcal{E}_Q^{(N)}(\phi):= \mathcal{E}^{(N,s)}(\phi\circ T)$.\\

Typically the accuracy of a cubature formula depends on the smoothness on the function to be integrated.
We follow the approach by Chawla \cite{MR224280} and consider functions which have holomorphic extensions.

\begin{thm}\label{thm:chawlaadapted}
 Let $0<s<a$ and $b>0$. Suppose that $\Phi$ is holomorphic on a neighborhood of $R_{a,b}\times R_{a,b} \subseteq \C^2$ and set 
 \begin{equation}
     \phi(x,y):=\Phi(x+\mi 0 , y+\mi 0),\quad (x,y)\in Q_s.
 \end{equation}
 Then it holds for all $N\in \N$ that 
 \begin{multline}\label{est:chawlaadapted}
     \left| \mathcal{E}^{(N,s)} (\phi)\right|\\
     \le \frac{8s(a+b)}{\pi} \left(\frac{\min\{a-s,b\}}{s}\right)^{-N} \left(\frac{2(a+b)}{\min\{a-s,b\}}+\frac12 \log\left( \frac{a+s}{a-s}\right) \right)  \|\Phi\|_{L^\infty(E_{s,a,b})},
 \end{multline}
 where $E_{s,a,b}:= (R_{a,b}\times[-s,s]) \cup ([-s,s]\times R_{a,b})\subseteq \C^2.$
\end{thm}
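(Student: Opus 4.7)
My plan is to reduce the bivariate error to two univariate Gauss-quadrature errors via a tensor decomposition, and then to invoke a Chawla-style contour-integral estimate on each univariate piece. Writing $If := \int_{-s}^s f(t)\,dt$ and $Q_Nf := \sum_{k=1}^N \omega_k f(\gamma_k)$ for the 1D integral and its Gauss approximation, the identity
\begin{equation*}
I\otimes I - Q_N\otimes Q_N \;=\; (I-Q_N)\otimes I \;+\; Q_N \otimes (I-Q_N)
\end{equation*}
yields
\begin{equation*}
\mathcal{E}^{(N,s)}(\phi) \;=\; \int_{-s}^s E_N[\phi(\cdot,y)]\, dy \;+\; \sum_{k=1}^N \omega_k E_N[\phi(\gamma_k,\cdot)],
\end{equation*}
with $E_N := I - Q_N$. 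Every slice of $\phi$ on the right extends holomorphically to $R_{a,b}$ with sup-norm dominated by $\|\Phi\|_{L^\infty(E_{s,a,b})}$, by the very definition of $E_{s,a,b}$; it therefore suffices to estimate the univariate Gauss error uniformly for functions analytic on $R_{a,b}$.

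For the univariate step I would use the Chawla kernel representation
\begin{equation*}
E_N(f) \;=\; \frac{1}{2\pi \mi} \oint_{\partial R_{a,b}} K_N(z) f(z)\, dz, \qquad K_N(z) \;=\; \frac{1}{p_N(z)} \int_{-s}^s \frac{p_N(t)}{z-t}\,dt,
\end{equation*}
valid because $Q_N$ is exact on polynomials of degree $\le 2N-1$; here $p_N$ denotes the Legendre polynomial orthogonal on $[-s,s]$. The kernel $K_N$ can then be bounded on $\partial R_{a,b}$ by combining the geometric growth of $p_N$ away from the real segment with a uniform bound on its associated second-kind function, dissecting the contour into its two vertical sides (on which $\mathrm{dist}(z,[-s,s]) \ge a-s$) and its two horizontal ones (on which $\mathrm{dist}(z,[-s,s]) \ge b$). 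Careful bookkeeping of the resulting line integrals produces
\begin{equation*}
|E_N(f)| \;\le\; \frac{2(a+b)}{\pi}\Big(\frac{\min\{a-s,b\}}{s}\Big)^{-N}\!\Big(\frac{2(a+b)}{\min\{a-s,b\}} + \tfrac12 \log\tfrac{a+s}{a-s}\Big)\|f\|_{L^\infty(R_{a,b})},
\end{equation*}
the algebraic term $2(a+b)/\min\{a-s,b\}$ arising from integrating $1/\mathrm{dist}(\cdot,[-s,s])$ over the sides of the rectangle, and the logarithmic term reflecting the well-known $\log\tfrac{a+s}{a-s}$-behaviour of the Legendre second-kind function near the endpoints $\pm s$.

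Assembling is then straightforward: plugging the 1D estimate into the tensor decomposition and using $\int_{-s}^s dy = \sum_{k=1}^N \omega_k = 2s$ (the Gauss weights sum to the length of the interval, by exactness on constants), the two summands together carry a prefactor $4s \cdot \frac{2(a+b)}{\pi} = \frac{8s(a+b)}{\pi}$, matching \eqref{est:chawlaadapted}, with the $L^\infty$-norm taken on $E_{s,a,b}$ as announced.

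The principal obstacle is the univariate kernel estimate. A more standard inscription of a Bernstein ellipse into $R_{a,b}$ would yield a stronger geometric rate of the type $\rho^{-2N}$, but with a less convenient dependence on the geometry of the rectangle; insisting on the distance $\min\{a-s,b\}$ to the real segment forces the more delicate analysis above, which must simultaneously track the algebraic contribution from the vertical sides and the logarithmic one coming from the horizontal sides near $z = \pm s$. Once this kernel estimate is pinned down, the remainder of the argument reduces to collecting constants.
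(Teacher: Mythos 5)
Your overall architecture is the same as the paper's: the paper likewise reduces to a contour-integral representation of the product-Gauss error over $\partial R_{a,b}$ (it imports this as formula (26') of Chawla \cite{MR224280} rather than re-deriving it from your tensor identity $I\otimes I-Q_N\otimes Q_N=(I-Q_N)\otimes I+Q_N\otimes(I-Q_N)$, which is a perfectly good and arguably cleaner way to obtain the same thing; your accounting of the prefactor $\tfrac{8s(a+b)}{\pi}$ via $\int_{-s}^s dy=\sum_k\omega_k=2s$ and the observation that each slice is controlled by $\|\Phi\|_{L^\infty(E_{s,a,b})}$ are both correct), then bounds the error by the perimeter times $\sup_{\partial R_{a,b}}|Q_N/P_N|$ times $\|\Phi\|_{L^\infty(E_{s,a,b})}$, and finishes with the same scaling argument to pass from $s=1$ to general $s$. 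Everything therefore hinges on the univariate kernel bound, and that is precisely where your proposal stops being a proof.

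The estimate you claim for $\|K_N\|_{L^\infty(\partial R_{a,b})}$ is asserted rather than derived, and the sketch (``geometric growth of $p_N$ away from the segment'' plus ``a uniform bound on its associated second-kind function'' plus ``careful bookkeeping'') does not visibly produce the specific two-term bracket together with the $N$-th power decay. Two concrete ingredients are missing. First, the factor $(\min\{a-s,b\}/s)^{-N}$ requires the lower bound $\inf_{z\in\partial R_{a,b}\cup(a,\infty)}|P_N(z)|\ge(\min\{a-s,b\}/s)^N$, which the paper proves in Lemma~\ref{lem:estlegendrepol} by factoring $P_N$ over its roots $t_j$ in the open segment, normalizing by the value $1$ at the right endpoint, and using $|z\pm t_j|\ge\min\{a-s,b\}$ on the rectangle boundary; ``growth away from the segment'' is a slogan, not a substitute for this argument. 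Second, the additive structure $\tfrac{2(a+b)}{\min\{a-s,b\}}+\tfrac12\log\tfrac{a+s}{a-s}$ comes from the identity $Q_N(z)=P_N(z)\int_z^\infty\frac{du}{(u^2-1)P_N(u)^2}$ (for $s=1$; the general case by rescaling), with the path of integration run from $z$ along $\partial R_{a,b}$ to the real point $a$ and then out to $+\infty$: the boundary portion yields the algebraic term and the real tail yields exactly $\int_a^\infty(t^2-1)^{-1}\,dt=\tfrac12\log\tfrac{a+1}{a-1}$. Your attribution of the logarithm to the endpoint behaviour of the second-kind function is the right heuristic but not an estimate; in particular, a uniform bound on $Q_N$ over $\partial R_{a,b}$ cannot by itself deliver the $N$-dependent decay, since that decay lives entirely in the ratio $Q_N/P_N$. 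Until this identity (or an equivalent device) is invoked and the resulting line integrals are actually estimated, the kernel bound --- and with it the theorem --- remains unproved.
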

\begin{proof}
First we prove the statement for the case $s=1$. The general case will then follow through a scaling argument.\\

As a starting point, we pick up formula (26') from \cite{MR224280} which states that
\begin{equation}\label{eq:chawla26prime}
    \mathcal{E}^{(N,1)}(\phi) = \frac2{\pi\mi} \int_\gamma \frac{Q_N(z)}{P_N(z)} \Phi(z,\zeta)\,dz + \frac{2}{\pi\mi} \int_{\gamma'} \frac{Q_N(u)}{P_N(u)} \Phi(\zeta',u)\,du,
\end{equation}
where $\gamma, \gamma'\subseteq\C$ are closed curves in the complex plane enclosing the interval $[-1,1]$, and where $\zeta,\zeta'\in [-1,1]$ are suitably chosen. 
Moreover, $P_N$ denotes the $N$-th Legendre polynomial, and $Q_N$ the $N$-th Legendre function of the second kind, single-valued and analytic in the complex plane with the interval $[-1,1]$ deleted.\\
We pick $\gamma=\gamma'=\partial R_{a,b}$, which clearly meets the above requirement.
It follows from \eqref{eq:chawla26prime} that 
\begin{equation}\label{eq:esterrorN1}
    \left|\mathcal{E}^{(N,1)}(\phi) \right| \le \frac{8(a+b)}\pi \|Q_N/P_N\|_{L^\infty(\gamma)} \|\Phi\|_{L^\infty(E_{1,a,b})}.
\end{equation}
In \cite[section 3.6.2., formula (22)]{MR0058756} we find the identity 
\begin{equation}
    Q_N(z) = P_N(z) \int\limits_z^{\infty} \frac{du}{(u^2-1)P_N(u)^2},
\end{equation}
where the path of integration does not cross the cut $[-1,1]$.\\
We consider now a fixed point $z_0\in \partial R_{a,b}$. Let $\beta$ denote the component of $\partial R_{a,b} \setminus( \{z_0\}\cup \{a+0\mi\})$ with minimal length.
With that, we get that 
\begin{equation}
    \begin{aligned}
     \left| Q_N(z_0)/P_N(z_0)\right| &= \left| \int_\beta \frac{dz}{(z^2-1)P_N(z)} + \int\limits_{a}^\infty \frac{dt}{(t^2-1)P_N(t)}\right|\\
                                    &\le \left(\inf_{z\in \partial R_{a,b} \cup (a,+\infty)} |P_N(z)| \right)^{-1}\\
                                    &\quad \times \left(\len(\beta) \left\|(\cdot^2-1)^{-1}\right\|_{L^\infty(\partial R_{a,b})} + \int\limits_a^\infty (t^2-1)^{-1}\,dt \right).
    \end{aligned}
\end{equation}
Since $\len(\beta)\le 2(a+b)$, and $\|(\cdot^2-1)^{-1}\|_{L^\infty(\partial R_{a,b})} \le \min\{a-1,b\}^{-2}$ and $\int\limits_a^\infty (t^2-1)^{-1}\,dt=\frac12 \log\left(\frac{a+1}{a-1}\right)$, we further have that
\begin{equation}
    \left| Q_N(z_0)/P_N(z_0)\right| \le \left(\inf_{z\in \partial R_{a,b} \cup (a,+\infty)} |P_N(z)| \right)^{-1} \cdot \left( \frac{2(a+b)}{\min\{a-1,b\}^2}+ \frac12 \log\left(\frac{a+1}{a-1}\right)\right).
\end{equation}
Since $z_0\in \partial R_{a,b}$ was arbitrary, we get with the help of Lemma \ref{lem:estlegendrepol} that
\begin{equation}
    \|Q_N/P_N\|_{L^\infty(\gamma)} \le \min\{a-1,b\}^{-N}  \cdot \left( \frac{2(a+b)}{\min\{a-1,b\}^2}+ \frac12 \log\left(\frac{a+1}{a-1}\right)\right).
\end{equation}
Now, combining this with \eqref{eq:esterrorN1} yields the claim for $s=1.$\\

Let now $s>0$ be arbitrary and suppose that $a>s$ and $b>0$.
We set $a':=a/s$ and $b':=b/s$. Moreover, let $\Phi':=\Phi(s\cdot)$ and $\phi':=\Phi'(\cdot+0\mi,\cdot+0\mi)$.
Using the first part we get that
\begin{equation}
    \begin{aligned}
     \left| \mathcal{E}^{(N,s)}(\phi)\right| &\overset{\text{c.o.v.}}{=} s^2 \left| \mathcal{E}^{(N,1)}(\phi')\right|\\
                                            &\overset{\text{part 1}}{\le} s^2 \frac{8(a'+b')}{\pi} \min\{a'-1,b'\}^{-N} \\ 
                                            &\qquad\times \left(\frac{2(a'+b')}{\min\{a'-1,b'\}}+\frac12 \log\left( \frac{a'+1}{a'-1}\right) \right) \|\Phi'\|_{L^\infty(E_{1,a',b'})}\\
                                            &= \frac{8s(a+b)}{\pi} \left(\frac{\min\{a-s,b\}}{s}\right)^{-N}\\
                                            &\qquad \times\left(\frac{2(a+b)}{\min\{a-s,b\}}+\frac12 \log\left( \frac{a+s}{a-s}\right) \right) \|\Phi\|_{L^\infty(E_{s,a,b})},
    \end{aligned}
\end{equation}
as desired.
\end{proof}

\subsection{Main results of this section}
Next we apply the error estimate, Theorem \ref{thm:chawlaadapted} to our integrand $(\Sp f- \Sp g)^2$.
\begin{prop}\label{prop:errorestspectrodiff}
 Let $Q\subseteq \R^2$ be a square of side length $s>0.$
 Then it holds for all $N\in\N$ and $f,g\in L^2(\R)$ that
 \begin{equation}
     \left| \mathcal{E}^{(N)}_Q ((\Sp f- \Sp g)^2) \right| \le 3 \left(\sqrt{8\pi}s+2 \right)^{N+3} N^{-\frac{N-1}2} e^{N/2}  (\|f\|_{L^2(\R)}^2 + \|g\|_{L^2(\R)}^2).
 \end{equation}
\end{prop}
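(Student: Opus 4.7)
The plan is to reduce the estimate to an application of Theorem \ref{thm:chawlaadapted} by exhibiting a holomorphic extension of $(\Sp f - \Sp g)^2$ to a polydisc and then tuning the free parameters $a,b$ as functions of $N$.

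\emph{Setting up the extension.} Using the Bargmann--Gabor identity from Section \ref{sec:fullycontinuous}, I set $F := \mathfrak{B}f$, so that for real $(x,y)$,
\begin{equation}
\Sp f(x,y) = |F(x - \mi y)|^2\, e^{-\pi(x^2 + y^2)}.
\end{equation}
Since $F$ is entire, the formal substitution of complex arguments $z_1,z_2$ for $x,y$, together with the anti-holomorphic companion $\overline F(z) := \overline{F(\bar z)}$ (which is entire), yields a holomorphic extension
\begin{equation}
\tilde{\Sp}f(z_1, z_2) := F(z_1 - \mi z_2)\,\overline{F}(z_1 + \mi z_2)\,e^{-\pi(z_1^2 + z_2^2)}
\end{equation}
on $\C^2$. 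Define $\tilde{\Sp}g$ analogously and let $\Phi := (\tilde{\Sp}f - \tilde{\Sp}g)^2$; this is the required holomorphic extension of the integrand $(\Sp f - \Sp g)^2$.

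\emph{Bounding $\Phi$ on $E_{s,a,b}$.} The Fock reproducing kernel bound $|F(z)| \leq \|f\|_{L^2}\, e^{\pi|z|^2/2}$, applied to each factor and combined with $|e^{-\pi(z_1^2+z_2^2)}| = e^{-\pi\Re(z_1^2+z_2^2)}$, produces (after expanding $|z_1 \pm \mi z_2|^2$ and $\Re(z_j^2)$ in real and imaginary parts and observing a clean cancellation) the estimate
\begin{equation}
|\tilde{\Sp}f(z_1, z_2)| \leq \|f\|_{L^2}^2\, \exp\!\left(2\pi\bigl[(\Im z_1)^2 + (\Im z_2)^2\bigr]\right).
\end{equation}
On $E_{s,a,b}$, either $\Im z_1 = 0$ or $\Im z_2 = 0$, with the non-vanishing imaginary part bounded by $b$, so $\|\Phi\|_{L^\infty(E_{s,a,b})} \lesssim (\|f\|_{L^2}^2 + \|g\|_{L^2}^2)^2\,e^{4\pi b^2}$.

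\emph{Tuning parameters.} I choose $a := s + b$ so that $\min\{a-s,b\} = b$; Theorem \ref{thm:chawlaadapted} then reduces to a bound of the form $s(s+b)\,(s/b)^N \bigl(\tfrac{s+b}{b} + \log\tfrac{s+b}{b}\bigr)\,(\|f\|^2+\|g\|^2)^2 e^{4\pi b^2}$. Minimising the dominant $b$-dependence $(s/b)^N e^{4\pi b^2}$ by differentiating the logarithm gives $b = \sqrt{N/(8\pi)}$, at which $(s/b)^N e^{4\pi b^2} = (s\sqrt{8\pi})^N e^{N/2} N^{-N/2}$. The polynomial-plus-log prefactor evaluated at this $b$ contributes the remaining $\sqrt{N}$ (turning $N^{-N/2}$ into $N^{-(N-1)/2}$) and $O(1)$ terms that are absorbed into the stated $(\sqrt{8\pi}s+2)^3$ slack via $(\sqrt{8\pi}s+2)^{N+3} \geq \binom{N+3}{3}(\sqrt{8\pi}s)^N$.

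\emph{Main obstacle.} The conceptual core is straightforward: construct the extension, bound it, optimise. The tedious part is the constant bookkeeping needed to land exactly on the stated form $3(\sqrt{8\pi}s+2)^{N+3} N^{-(N-1)/2} e^{N/2}$. The one genuine concern is the dependence on $\|f\|^2, \|g\|^2$: my route naturally produces $(\|f\|_{L^2}^2 + \|g\|_{L^2}^2)^2$, whereas the statement has the first power; I expect this is either reconciled by exploiting the pointwise bound $\|\Sp h\|_\infty \leq \|h\|_{L^2}^2$ at the real sampling nodes to peel off one factor, or else reflects a minor typo in the statement.
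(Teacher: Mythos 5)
Your proposal is correct and follows essentially the same route as the paper: a holomorphic extension of $(\Sp f-\Sp g)^2$ to $\C^2$ (yours built from the Bargmann transform, the paper's from the explicit integral $\mathcal{T}f(z,\zeta)=\int_\R f(t)e^{-\pi(t-z)^2-2\pi\mi\zeta t}\,dt$ paired with $\overline{\mathcal{T}f(\bar z,\bar\zeta)}$ --- the same function, since such an extension is unique), the same growth bound $e^{4\pi b^2}$ on $E_{s,a,b}$, and the identical parameter choice $a=s+b$, $b=\sqrt{N/(8\pi)}$ in Theorem \ref{thm:chawlaadapted}. Regarding your one concern: you are right and the stated proposition is not --- under $f\mapsto\lambda f$, $g\mapsto\lambda g$ the integrand and hence the error scale like $\lambda^4$ while the right-hand side as printed scales like $\lambda^2$, so the correct dependence is $(\|f\|_{L^2(\R)}^2+\|g\|_{L^2(\R)}^2)^2$; the paper's own intermediate bound \eqref{est:PhiEsab} silently drops a square, and the fix propagates harmlessly into Theorem \ref{thm:discretizationsquare} as a factor $2$ on $\ln\kappa$.
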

Before we prove Proposition \ref{prop:errorestspectrodiff}, we discuss its implications.

\begin{thm}\label{thm:discretizationsquare}
 Assume the setting and conditions of Theorem \ref{thm:generalcasecheeger}. Moreover, assume that $\Omega$ is a square of side length $s>0$ and let $\varepsilon\in (0,1/2)$
 There exist universal constants $C,C'>0$ and a finite set $\Lambda\subseteq \Omega$ with associated weights $(w_\lambda)_{\lambda\in\Lambda} \subseteq \R_+$ satisfying $\sum_{\lambda\in\Lambda} w_\lambda=s^2$, such that 
 \begin{enumerate}[(i)]
 \item for all $g\in L^2(\R)$ it holds that 
 \begin{multline}
     \min_{\tau\in\torus} \|\G g-\tau\G f\|_{L^2(\Omega)}\\ \le C \left(KM^{1/2}L^{1/2} + h^{-2} \delta_0 K\nu^{3/2}L^{1/2}+ \vol(\Omega)^{1/2}\right)^{1/2} \left(\|\Sp f-\Sp g\|_{\ell^2_w(\Lambda)}^{1/2}+\varepsilon \right),
 \end{multline}
 \item the number of sampling points is bounded according to
 \begin{equation}\label{est:thmdiscretizationsquare}
     |\Lambda|\le C'  \cdot \max\left\{\ln \frac1{\varepsilon}, \ln \left(\|f\|_{L^2(\R)}^2+ \|g\|_{L^2(\R)}^2\right) , s\right\}^2.
 \end{equation}
 \end{enumerate}
\end{thm}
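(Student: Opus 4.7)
The plan is to discretize $\|\Sp f - \Sp g\|_{L^2(\Omega)}$ via a product Gauss cubature rule on a partition of $\Omega$ into sub-squares of side length at most $1$, apply Proposition \ref{prop:errorestspectrodiff} to control the cubature error on each sub-square with a uniformly bounded base, and plug the outcome into Theorem \ref{thm:generalcasecheeger}. Concretely, I would write $\Omega$ as a disjoint union of $\lceil s\rceil^2$ sub-squares $Q'$ of side length at most $1$ (this auxiliary partition is independent of the $\mathcal{Q}$ appearing in the hypothesis), apply the product Gauss rule of common degree $N$ on each $Q'$, and form $\Lambda$ as the union of the resulting node sets, equipped with the tensor-product Gauss--Legendre weights. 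Since the one-dimensional Gauss--Legendre weights sum to the length of the interval, tensoring and summing over sub-squares gives $\sum_\lambda w_\lambda = \vol(\Omega) = s^2$, matching the normalization in the statement.

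By construction of the rule,
\begin{equation*}
\|\Sp f - \Sp g\|_{L^2(\Omega)}^2 \;=\; \|\Sp f - \Sp g\|_{\ell^2_w(\Lambda)}^2 \;+\; E, \qquad E \;:=\; \sum_{Q'} \mathcal{E}_{Q'}^{(N)}\bigl((\Sp f - \Sp g)^2\bigr).
\end{equation*}
If $N$ can be chosen so that $|E| \le \varepsilon^4$, then two applications of the elementary inequality $(a+b)^{1/2} \le a^{1/2} + b^{1/2}$ yield
\begin{equation*}
\|\Sp f - \Sp g\|_{L^2(\Omega)}^{1/2} \;\le\; \|\Sp f - \Sp g\|_{\ell^2_w(\Lambda)}^{1/2} + \varepsilon,
\end{equation*}
and inserting this bound in Theorem \ref{thm:generalcasecheeger} (with the constants coming from the original $\mathcal{Q}$) produces part (i).

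What remains is to pick $N$ small enough that $|\Lambda| = \lceil s\rceil^2 N^2$ matches the count in (ii), yet large enough to enforce $|E| \le \varepsilon^4$. Proposition \ref{prop:errorestspectrodiff} applied on each sub-square of side at most one gives
\begin{equation*}
|E| \;\le\; 3 \lceil s\rceil^2 (\sqrt{8\pi}+2)^{N+3}\, N^{-(N-1)/2}\, e^{N/2}\, \bigl(\|f\|_{L^2(\R)}^2 + \|g\|_{L^2(\R)}^2\bigr).
\end{equation*}
The super-exponential decay $N^{-N/2}$ overtakes the growth $(\sqrt{8\pi}+2)^N e^{N/2}$ once $\sqrt{N}$ exceeds a fixed multiple of $(\sqrt{8\pi}+2)\sqrt{e}$. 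A direct asymptotic computation then shows that
\begin{equation*}
N \;=\; C\,\max\bigl\{\ln(1/\varepsilon),\; \ln(\|f\|_{L^2(\R)}^2+\|g\|_{L^2(\R)}^2),\; \ln s,\; 1\bigr\}
\end{equation*}
with $C$ a sufficiently large absolute constant suffices to force $|E| \le \varepsilon^4$; the resulting node count is controlled by the right-hand side of \eqref{est:thmdiscretizationsquare}.

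The main obstacle is precisely this quantitative parameter selection: one has to simultaneously beat the factor $(\sqrt{8\pi}+2)^N e^{N/2}$, absorb the $\lceil s\rceil^2$ loss from summing the error across sub-squares and the prefactor $\|f\|^2 + \|g\|^2$, and still keep $N$ only logarithmic in all relevant scales so that the cardinality of $\Lambda$ stays of the claimed order. Since the analytic content, namely the holomorphic extension behaviour of $(\Sp f-\Sp g)^2$, is already packaged in Proposition \ref{prop:errorestspectrodiff}, no further function-theoretic ingredient is needed and the remainder is careful bookkeeping.
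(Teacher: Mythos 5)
Your treatment of part (i) follows the paper's proof almost verbatim: decompose $\|\Sp f-\Sp g\|_{L^2(\Omega)}^2$ into the cubature sum plus the error $E$, force $|E|\le\varepsilon^4$ using Proposition \ref{prop:errorestspectrodiff}, split the fourth root, and insert the result into Theorem \ref{thm:generalcasecheeger}. That part is sound, including the verification that $N\asymp\max\{\ln(1/\varepsilon),\ln\kappa,\ln s,1\}$ (with $\kappa=\|f\|_{L^2(\R)}^2+\|g\|_{L^2(\R)}^2$) suffices to beat the fixed base $(\sqrt{8\pi}+2)^{N+3}e^{N/2}$ and the prefactor $\lceil s\rceil^2\kappa$, since these contribute only $O(N)+O(\ln s)+O(\ln\kappa)$ against the $\tfrac{N}{2}\ln N$ decay.

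The gap is in part (ii), and it comes precisely from where you depart from the paper. The paper applies a \emph{single} product Gauss rule of degree $N$ to all of $\Omega$, so $|\Lambda|=N^2$ and the side length $s$ enters only through the base $(\sqrt{8\pi}s+2)^{N+3}$ in Proposition \ref{prop:errorestspectrodiff}. You instead tile $\Omega$ by $\lceil s\rceil^2$ unit sub-squares and run a degree-$N$ rule on each, so $|\Lambda|=\lceil s\rceil^2N^2$. This is a \emph{product} of the two scales, not a maximum: taking $s=\ln(1/\varepsilon)=T\to\infty$ gives $|\Lambda|\gtrsim T^4$, whereas the right-hand side of \eqref{est:thmdiscretizationsquare} is $C'T^2$; even for fixed $\varepsilon$ and $\kappa$ your count is $s^2(\ln s)^2$ against the claimed $C's^2$. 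So the closing assertion that the node count ``is controlled by the right-hand side of \eqref{est:thmdiscretizationsquare}'' is not justified and is false in general, and (ii) is not established by your construction. To get (ii) as stated you should apply the cubature rule globally on $\Omega$ as the paper does; the price is that the error condition then reads
\begin{equation}
(N-2)\ln N\;\ge\;8\ln\tfrac1\varepsilon+2\ln\kappa+2(N+3)\ln(\sqrt{8\pi}s+2)+2\ln 3,
\end{equation}
whose $s$-dependent term is the genuinely delicate point of the bookkeeping (for large $s$ it forces $\ln N\ge 2\ln(\sqrt{8\pi}s+2)+O(1)$, so one should check carefully that the linear-in-$s$ choice of $N$ claimed in the theorem really suffices there as well).
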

\begin{rem}
The assumption that the underlying domain $\Omega$ is a square is nothing more than convenient here. 
In principle one could also derive a similar result if $\Omega$ can be written as a union of squares, by applying Proposition \ref{prop:errorestspectrodiff} on each of these squares.
\end{rem}
\begin{proof}[Proof of Theorem \ref{thm:discretizationsquare}]
Once more, we set $\phi:=(\Sp f- \Sp g)^2$.
Unsurprisingly, the sampling points and weights employed will be coming from the Gauss product rule, with its degree $N$ yet to be specified.\\

Theorem \ref{thm:generalcasecheeger} tells us that 
\begin{multline}
     \min_{\tau\in\torus} \|\G g-\tau\G f\|_{L^2(\Omega)} \\ \le C \left(KM^{1/2}L^{1/2} + h^{-2} \delta_0 K\nu^{3/2}L^{1/2}+ \vol(\Omega)^{1/2}\right)^{1/2}  \|\phi\|_{L^2(\Omega)}^{1/2}
\end{multline}
Since we have that 
\begin{equation}
    \begin{aligned}
     \|\phi\|_{L^2(\Omega)}^{1/2} &= \left(\|\phi\|_{\ell^2_w(\Lambda)} + \mathcal{E}^{(N)}_\Omega (\phi) \right)^{1/4}\\
                                  & \le \left(\|\phi\|_{\ell^2_w(\Lambda)} + |\mathcal{E}^{(N)}_\Omega (\phi)| \right)^{1/4}\\
                                  & \le \|\phi\|_{\ell^2_w(\Lambda)}^{1/4} + |\mathcal{E}^{(N)}_\Omega(\phi)|^{1/4},
    \end{aligned}
\end{equation}  
it suffices to show that $|\mathcal{E}^{(N)}_\Omega (\phi)| \le \varepsilon^4.$
According to Proposition \ref{prop:errorestspectrodiff} (and after taking logarithms), this can be guaranteed by demanding that
\begin{equation}\label{eq:demandineq}
    \ln 3 + (N+3)\ln(\sqrt{8\pi}s+2) +\ln\kappa -\frac{(N-1)\ln N}2 +\frac{N}2 \overset{!}{\le} 4\ln \varepsilon,
\end{equation}
where we set $\kappa:=\|f\|_{L^2(\R)}^2+ \|g\|_{L^2(\R)}^2.$
Rearranging terms implies that \eqref{eq:demandineq} is equivalent to
\begin{equation}
    N \ln N- N -\ln N \overset{!}{\ge} 8\ln \frac1\varepsilon + 2\ln\kappa + 2(N+3)\ln(\sqrt{8\pi}s+2) + 2\ln 3.
\end{equation}
Since, $\ln N < N$, we have that $N\ln N - N - \ln N \ge (N-2)\ln N$. From this, one realizes that
\begin{equation}
    N\ge C' \max\left\{ \ln \frac1{\varepsilon}, \ln \kappa , s\right\},
\end{equation}
with $C'$ sufficiently large guarantees the desired accuracy. 
Since $|\Lambda|=N^2$ we get that indeed \eqref{est:thmdiscretizationsquare} holds true.
\end{proof}

\subsection{Proof of Proposition \ref{prop:errorestspectrodiff}}
The idea is to apply the error estimate provided in Theorem \ref{thm:chawlaadapted}. 
The proof consists of two crucial steps:
\begin{enumerate}[a)]
 \item Identify a function (in fact, such a function is unique if it exists) $\Phi$ which is analytic on $\C^2$ and which extends our integrand, i.e.,
 \begin{equation}\label{eq:Phiextprop}
  \Phi(x+0\mi,\xi+0\mi) = (\Sp f(x,\xi)-\Sp g(x,\xi))^2, \quad (x,\xi)\in Q.
 \end{equation}
 and estimate $\|\Phi\|_{L^\infty(E_{s,a,b})}$, with $E_{s,a,b}$ defined as in Theorem \ref{thm:chawlaadapted}.
\item Apply \eqref{est:chawlaadapted} for carefully chosen $a,b$.
\end{enumerate}

\textbf{Step a)}
We will find the following result to be rather useful.
\begin{lem}\label{lem:Tf}
 Let $f\in L^2(\R)$. Then 
 \begin{equation}
  \mathcal{T} f(z,\zeta):= \int_\R f(t) \exp \left\{-\pi(t-z)^2-2\pi\mi\zeta t \right\}\,dt,\quad (z,\zeta)\in\C^2
 \end{equation}
is an entire function and satisfies that 
\begin{equation}\label{eq:estTf}
\left| \mathcal{T} f(x+\mi y, \xi+\mi \eta)\right| \le 2^{-1/4} e^{\pi(y^2+2x\eta+\eta^2)}.
\end{equation}
\end{lem}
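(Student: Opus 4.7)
\textbf{Proof plan for Lemma \ref{lem:Tf}.}

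The plan is to split the lemma into two independent parts: (a) entirety of $\mathcal{T}f$, and (b) the pointwise bound, which is a straightforward application of Cauchy--Schwarz followed by a Gaussian integration.

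For entirety, fix a compact set $K\subseteq \C^2$. For $(z,\zeta)\in K$ the kernel $(t,z,\zeta)\mapsto \exp\{-\pi(t-z)^2-2\pi\mi \zeta t\}$ is, for each $t$, holomorphic in $(z,\zeta)$, and its modulus is dominated by a function of the form $C_K \exp\{-\pi t^2/2\}$ (say, after expanding the real part of the exponent and absorbing the bounded contribution from $z,\zeta\in K$ into $C_K$). Since $f\in L^2(\R)$ and $e^{-\pi t^2/2}\in L^2(\R)$, Cauchy--Schwarz yields a fixed $L^1$ majorant, uniformly for $(z,\zeta)\in K$. Therefore one may differentiate under the integral sign (or invoke Morera's theorem slicewise together with Hartogs' theorem) to conclude that $\mathcal{T}f$ is entire on $\C^2$.

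For the pointwise bound I would apply Cauchy--Schwarz directly:
\begin{equation}
\left|\mathcal{T}f(z,\zeta)\right|^2 \le \|f\|_{L^2(\R)}^2 \int_\R \left|\exp\{-\pi(t-z)^2-2\pi\mi\zeta t\}\right|^2 \, dt.
\end{equation}
Writing $z=x+\mi y$ and $\zeta=\xi+\mi\eta$, the real part of the exponent of the integrand is $-\pi(t-x)^2+\pi y^2+2\pi\eta t$ (the $\xi$-dependence is purely imaginary and drops out), so
\begin{equation}
\left|\exp\{-\pi(t-z)^2-2\pi\mi\zeta t\}\right|^2 = \exp\bigl\{-2\pi(t-x)^2+2\pi y^2+4\pi\eta t\bigr\}.
\end{equation}
Completing the square in $t$ gives $-2\pi(t-x)^2+4\pi\eta t = -2\pi(t-(x+\eta))^2 + 4\pi x\eta+2\pi\eta^2$, whence the integral evaluates to $2^{-1/2}\exp\{2\pi(y^2+2x\eta+\eta^2)\}$. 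Taking square roots yields the claimed bound (with the implicit $\|f\|_{L^2(\R)}$ factor that is needed for Proposition \ref{prop:errorestspectrodiff}).

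I do not anticipate a real obstacle here: both parts are standard. The only mild subtlety is making sure one tracks the signs carefully when expanding $-\pi(t-z)^2$ with complex $z$, since $(t-z)^2 \neq |t-z|^2$; the cross terms $2\pi\mi y(t-x)$ and $\pi y^2$ produced by this expansion are what give rise to the $e^{\pi y^2}$ factor in the final estimate, and likewise the $2\pi\eta t$ shift in $t$ is what produces the $e^{\pi(2x\eta+\eta^2)}$ factor after completing the square.
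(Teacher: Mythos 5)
Your proposal is correct and follows essentially the same route as the paper: Cauchy--Schwarz against the Gaussian kernel, computation of the real part of the exponent, and a completed-square Gaussian integral for the bound, with the uniform-on-compacts majorant justifying differentiation under the integral for entirety. You are also right that the stated inequality \eqref{eq:estTf} is missing the factor $\|f\|_{L^2(\R)}$, which the paper's own proof (and its later use) clearly includes.
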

\begin{proof}
With $z=x+\mi y$ and $\zeta=\xi+\mi \eta$, 
Cauchy-Schwarz inequality gives that 
 \begin{equation}
 \begin{aligned}
  \int_\R \left| f(t) \exp \left\{-\pi(t-z)^2-2\pi\mi\zeta t\right\}\right|\,dt 
            &\le \|f\|_{L^2(\R)} \left(\int_\R \left| \exp \left\{-\pi(t-z)^2-2\pi\mi\zeta t\right\}\right|^2\,dt \right)^{1/2}\\
            &= \|f\|_{L^2(\R)} \left( \int_\R \exp\left\{-2\pi(t-x)^2+2\pi y^2 +4\pi\eta t \right\} \,dt\right)^{1/2}\\
            &= \|f\|_{L^2(\R)} \cdot 2^{-1/4} \exp\left\{\pi(\eta^2+2x\eta+y^2) \right\},
  \end{aligned}
 \end{equation}
 which implies \eqref{eq:estTf}.
 Moreover, with this estimate we can justify that differentiation and integration can be interchanged when differentiating $\mathcal{T} f$ w.r.t. $z$ and $\zeta$. Thus, $\mathcal{T} f$ is indeed entire.
\end{proof}
From its definition it is evident that $\mathcal{T} f$ extends $\G f$, meaning that $\T f(x+\mi 0,\xi+\mi 0)=\G f(x,\xi)$.
Thus, 
\begin{equation}
 \Phi(z,\zeta):= \left(\mathcal{T} f(z,\zeta) \overline{\mathcal{T} f(\bar{z},\bar{\zeta})} - \mathcal{T} g(z,\zeta) \overline{\mathcal{T} g(\bar{z},\bar{\zeta})}  \right)^2
\end{equation}
is a function with requested property \eqref{eq:Phiextprop}.
Furthermore, making use of \eqref{eq:estTf} yields
\begin{equation}
 \begin{aligned}
  |\Phi(z,\zeta)| &\le \left(\left| \mathcal{T} f(z,\zeta) \mathcal{T} f(\bar{z},\bar{\zeta})\right| + \left|\T g(z,\zeta) \T g(\bar{z},\bar{\zeta})\right| \right)^2\\
        &\le 2 \left( \left| \mathcal{T} f(z,\zeta) \mathcal{T} f(\bar{z},\bar{\zeta})\right|^2 + \left|\mathcal{T} g(z,\zeta) \mathcal{T} g(\bar{z},\bar{\zeta})\right|^2\right)\\
        &\le 2 \cdot 2^{-1/2} e^{4\pi(y^2+\eta^2)} \left(\|f\|_{L^2(\R)}^2+\|g\|_{L^2(\R)}^2\right)
 \end{aligned}
\end{equation}
Hence we obtain that 
\begin{equation}\label{est:PhiEsab}
 \|\Phi\|_{L^\infty(E_{s,a,b})} \le \sqrt{2} e^{4\pi b^2} (\|f\|_{L^2(\R)}^2 + \|g\|_{L^2(\R)^2}).
\end{equation}

\textbf{Step b)}
We can assume w.l.o.g. that $Q=Q_s.$
It follows from Theorem \ref{thm:chawlaadapted}, together with \eqref{est:PhiEsab} that 
\begin{multline}
    \left| \mathcal{E}^{(N,s)}((\Sp f - \Sp g)^2)\right|  \le 
     \frac{8s(a+b)}{\pi} \left(\frac{\min\{a-s,b\}}{s}\right)^{-N}\\
     \times \left(\frac{2(a+b)}{\min\{a-s,b\}}+\frac12 \log\left( \frac{a+s}{a-s}\right) \right) 
     \cdot \sqrt2 e^{4\pi b^2} (\|f\|_{L^2(\R)}^2 + \|g\|_{L^2(\R)}^2),
\end{multline}
where $a>s$ and $b>0$ are arbitrary.
Depending on $N$, we pick $b:=\sqrt{\frac{N}{8\pi}}$ and $a:=s+b$. With that, we get that  
\begin{multline}
         \left| \mathcal{E}^{(N,s)}((\Sp f - \Sp g)^2)\right|  \le \frac{\sqrt2 \cdot 8}{\pi} s(s+2b) \left(\frac{b}{s}\right)^{-N} \left( \frac{2(s+2b)}{b}+ \frac12\log\left(\frac{2s+b}{b}\right)\right)\\
         \times e^{4\pi b^2}(\|f\|_{L^2(\R)}^2 + \|g\|_{L^2(\R)}^2).
\end{multline}
Using that $\log(1+t)\le t$ for $t\ge 0$ allows us to further bound the above error by 
\begin{equation}
    \frac{\sqrt2 \cdot 8}{\pi} \left( \frac{2s(s+2b)^2}{b} + \frac{s^2(s+2b)}{b}\right) \left(\frac{b}{s}\right)^{-N} e^{4\pi b^2} (\|f\|_{L^2(\R)}^2 + \|g\|_{L^2(\R)}^2).
\end{equation}
Using that $b=\sqrt{\frac{N}{8\pi}}\ge (8\pi)^{-1/2}$  we get that 
\begin{equation}
   \left( \frac{2s(s+2b)^2}{b} + \frac{s^2(s+2b)}{b}\right) \le \frac{3s(s+2b)^2}{b} = 3bs \left(\frac{s}{b}+2 \right)^2 < 3b\left( \sqrt{8\pi}s+2\right)^3.
\end{equation}
Hence the error can be controlled as follows.
\begin{equation}
    \begin{aligned}
    \left| \mathcal{E}^{(N,s)}((\Sp f - \Sp g)^2)\right|  &\le \frac{\sqrt2 \cdot 24}{\pi \cdot \sqrt{8\pi}} \left( \sqrt{8\pi}s+2\right)^3 (\sqrt{8\pi}s)^N  N^{-\frac{N-1}2} e^{N/2}  (\|f\|_{L^2(\R)}^2 + \|g\|_{L^2(\R)}^2)\\
    & < 3 \left(\sqrt{8\pi}s+2 \right)^{N+3} N^{-\frac{N-1}2} e^{N/2}  (\|f\|_{L^2(\R)}^2 + \|g\|_{L^2(\R)}^2),
    \end{aligned}
\end{equation}
and we arrive at the desired result.

\section*{Acknowledgements}
MR gratefully acknowledges the support by the Austrian Science Fund (FWF) through the START-Project Y963-N35.

\printbibliography

\appendix
\section{Auxiliary results}
We estimate a couple of quantities involving the Gabor transform of a concrete pair of functions.
\begin{lem}\label{lem:estexsharp}
Let $\varphi=2^{-1/2} e^{-\pi\cdot^2}$. For $a>0$ we define 
\begin{equation}
    f=f_a:=\varphi(\cdot+a)+\varphi(\cdot-a),\quad\text{and}\quad g=g_a:=\varphi(\cdot+a)-\varphi(\cdot-a).
\end{equation}
Then it holds that 
\begin{enumerate}[i)]
\item with $z=x+\mi y$, the Gabor transforms of the two functions satisfy the identities
\begin{gather}
    \G f(x,y)= e^{-\frac\pi2 a^2}e^{-\frac\pi2 |z|^2-\pi\mi x y} \cos(a\pi\mi \bar{z}),\\
    \G g (x,y) = \mi e^{-\frac\pi2 a^2}e^{-\frac\pi2 |z|^2-\pi\mi x y} \sin(a\pi\mi \bar{z}).
\end{gather}
\item for all $a>0$ we have that 
\begin{equation}
        \frac{\min_{\tau\in\torus} \|\G g-\tau \G f\|_{L^2(Q)}}{\|\Sp f - \Sp g\|_{L^2(Q)}^{1/2} } \gtrsim e^{\pi a}.
\end{equation}
\end{enumerate}
\end{lem}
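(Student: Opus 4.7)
Part (i) is a direct Gaussian-integral computation. The plan is to first evaluate $\G\varphi$ for the normalized Gaussian $\varphi(t) = 2^{-1/2} e^{-\pi t^2}$ by completing the square in the integrand $\varphi(t)\, e^{-\pi(t-x)^2}\,e^{-2\pi i ty}$, which yields $\G\varphi(x,y) = \tfrac12 \exp(-\pi|z|^2/2 - \pi i xy)$ with $z = x+iy$. Next, the change of variable $t\mapsto t\pm a$ establishes the covariance $\G(\varphi(\cdot \mp a))(x,y) = e^{\mp 2\pi i a y}\,\G\varphi(x\mp a,\,y)$. Plugging in the formula for $\G\varphi$, expanding $(x\mp a)^2$, and collecting the common factor $\exp(-\pi a^2/2 - \pi|z|^2/2 - \pi i xy)$ leaves the translation-dependent exponentials as $e^{\pm \pi a \bar z}$. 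Summing (respectively subtracting) those two terms and invoking $e^{u}\pm e^{-u} = 2\cosh u$ or $2\sinh u$, together with $\cos(iu) = \cosh u$ and $\sin(iu)= i\sinh u$, produces the stated formulas for $\G f$ and $\G g$.

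For part (ii), the plan is to reduce both the numerator and denominator to closed-form Gaussian integrals via two hyperbolic identities. Expanding $\cosh(\pi a(x-iy))$ by the addition formula and squaring, the double-angle identities give
\begin{equation*}
|\cosh(\pi a \bar z)|^2 = \tfrac12(\cosh(2\pi a x)+\cos(2\pi a y)),\quad |\sinh(\pi a \bar z)|^2 = \tfrac12(\cosh(2\pi a x)-\cos(2\pi a y)).
\end{equation*}
Hence $\Sp f - \Sp g = e^{-\pi a^2 - \pi|z|^2}\cos(2\pi a y)$, from which the trivial bound $\|\Sp f - \Sp g\|_{L^2(Q)}^{1/2}\lesssim e^{-\pi a^2/2}$ drops out by discarding the cosine. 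For the numerator, I would next show that $\langle \G f,\G g\rangle_{L^2(Q)} = 0$: applying $\sinh A\cosh B = \tfrac12[\sinh(A+B)+\sinh(A-B)]$ with $A = \pi a \bar z$, $B = \pi a z$ reduces the integrand $\G f\cdot\overline{\G g}$ to a Gaussian weight multiplied by $\sinh(2\pi a x) - i\sin(2\pi a y)$, both summands being odd in a coordinate of the symmetric square $Q$. Consequently
\begin{equation*}
\min_{\tau \in \T}\|\G g - \tau \G f\|_{L^2(Q)}^2 = \|\G f\|_{L^2(Q)}^2 + \|\G g\|_{L^2(Q)}^2 = \iint_Q e^{-\pi a^2 - \pi|z|^2}\cosh(2\pi a x)\, dA.
\end{equation*}

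The remaining task is a lower bound on this last integral. After separating variables, the nontrivial piece is the one-dimensional integral $\int_{-1/2}^{1/2} e^{-\pi x^2}\cosh(2\pi a x)\,dx$, which equals $e^{\pi a^2}\int_{a-1/2}^{a+1/2} e^{-\pi u^2}\,du$ upon completing the square. The displaced Gaussian tail is the key quantity: a Mills-ratio/Laplace-method estimate anchored at the left endpoint $u = a-1/2$ gives a lower bound of the form $c\,e^{-\pi(a-1/2)^2}/a = c\, e^{-\pi a^2 + \pi a - \pi/4}/a$. Multiplying back yields $\min_\tau \|\G g - \tau\G f\|_{L^2(Q)} \gtrsim e^{-\pi a^2/2 + \pi a/2}$ up to a polynomial factor in $a$; dividing by the denominator bound produces the asserted exponential growth of the ratio. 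The single technical step is the displaced Gaussian tail asymptotic; every other ingredient is bookkeeping of Gaussian integrals together with the two hyperbolic identities and the symmetry argument for the vanishing cross-term.
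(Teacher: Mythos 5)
Your part (i) and the bulk of part (ii) are correct, and your route through part (ii) is in fact cleaner than the paper's: you evaluate $|\cosh(\pi a\bar z)|^2$ and $|\sinh(\pi a\bar z)|^2$ exactly on the square and kill the cross term $\langle \G f,\G g\rangle_{L^2(Q)}$ by parity, whereas the paper restricts to the inscribed disk $B_{1/2}$ and uses orthogonality of the monomials there. Both reductions are sound. The problem is your last sentence: the bound you actually derive,
\begin{equation}
\min_{\tau\in\T}\|\G g-\tau\G f\|_{L^2(Q)}\gtrsim a^{-1/2}\,e^{-\pi a^2/2+\pi a/2},
\qquad
\|\Sp f-\Sp g\|_{L^2(Q)}^{1/2}\lesssim e^{-\pi a^2/2},
\end{equation}
gives a ratio of order $e^{\pi a/2}/\sqrt{a}$, \emph{not} the asserted $e^{\pi a}$. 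Since $e^{\pi a/2}/\sqrt a=o(e^{\pi a})$, the claimed inequality does not follow from your estimates; as written, the proof does not close.

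The deeper point is that your computation appears to be the correct one, and it is essentially sharp: your numerator bound is matched from above (the displaced Gaussian tail $\int_{a-1/2}^{a+1/2}e^{-\pi u^2}\,du$ is $\asymp e^{-\pi(a-1/2)^2}/a$ in both directions), and your denominator bound is matched from below ($\int_{-1/2}^{1/2}\cos^2(2\pi ay)\,dy\to 1/2$). So the true asymptotic of the ratio is $\asymp e^{\pi a/2}/\sqrt a$, and the exponent $e^{\pi a}$ in the statement is not attainable. The paper's own argument reaches $e^{\pi a}$ only through the evaluation $\|e^{a\pi z}\|_{L^2(B_{1/2})}^2=0.405\ldots e^{2\pi a}$, which cannot be right: $|e^{a\pi z}|^2=e^{2\pi a x}\le e^{\pi a}$ on $B_{1/2}$, so that norm is at most $\tfrac{\pi}{4}e^{\pi a}$ (and is in fact $\asymp e^{\pi a}a^{-3/2}$ by the Bessel-function expansion $\sum_k (a\pi/2)^{2k}/(k!(k+1)!)$). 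So do not try to force your argument up to $e^{\pi a}$; instead, either state the conclusion with the correct rate $e^{\pi a/2}$ (up to a power of $a$), which still exhibits the exponential degeneration the example is meant to illustrate, or flag the discrepancy explicitly. As it stands, the gap in your write-up is the unjustified leap from $e^{\pi a/2}$ to $e^{\pi a}$ in the final line.
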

\begin{proof}
The Gabor transform of a shifted Gaussian can be elementary computed, i.e., 
\begin{equation}
    (\G \varphi(\cdot-\tau))(x,y) = e^{-\frac\pi2 [(x-\tau)^2+y^2]} e^{-\pi \mi (x+\tau)y}.
\end{equation}
Thus, since $f$ and $g$ only differ by the sign between the Gaussian components we get, that their respective Gabor transforms are equal to 
\begin{multline}
     e^{-\frac\pi2 [(x+a)^2+y^2]}e^{-\pi\mi(x-a)y} \pm e^{-\frac\pi2 [(x-a)^2+y^2]}e^{-\pi\mi(x+a)y}\\
     = e^{-\frac\pi2 a^2} e^{-\frac\pi2 (|z|^2-2\pi\mi xy)} \left( e^{-\pi a \bar{z}} \pm e^{\pi a \bar{z}} \right),
\end{multline}
which implies the first claim.\\

For the second assertion, we first estimate the denominator according to 
\begin{equation}
    \begin{aligned}
    \|\Sp f- \Sp g\|_{L^2(Q)}^2 &\asymp e^{-2\pi a^2} \left\||cos(a\pi\mi\bar{z})|^2 - |\sin(a\pi\mi\bar{z})|^2 \right\|_{L^2(Q)}^2\\
                                &\asymp e^{-2\pi a^2} \left\| \left|e^{a\pi\bar{z}}+e^{-a\pi\bar{z}} \right|^2 - \left|e^{a\pi\bar{z}}-e^{-a\pi\bar{z}} \right|^2 \right\|_{L^2(Q)}^2\\
                                & \asymp e^{-2\pi a^2} \left\| \Re \left\{e^{a\pi\bar{z}} \overline{e^{-a\pi\bar{z}}} \right\} \right\|_{L^2(Q)}^2\\
                                &= e^{-2\pi a^2} \left\| \Re \left\{e^{-2a\pi\mi y}\right\} \right\|_{L^2(Q)}^2\\
                                & \le e^{-2\pi a^2}.
    \end{aligned}
\end{equation} 
The nominator can be bounded from below by the estimate 
\begin{equation}
\begin{aligned}
    \min_{\tau\in\torus} \|\G g - \tau \G f\|_{L^2(Q)}^2 &\asymp e^{-\pi a^2} \min_{\tau\in\torus} \|\sin(a\pi\mi \bar{z}) - \tau \cos(a\pi\mi \bar{z})\|_{L^2(Q)}^2 \\
            &\gtrsim e^{-\pi a^2} \min_{c\in\C} \|\sin(a\pi\mi\bar{z}) - c\cos(a\pi\mi \bar{z}) \|_{L^2(B_{1/2})}^2.
\end{aligned}
\end{equation}
With $\phi:=\cos(a\pi\mi z)$ and $\psi:=\sin(a\pi\mi z)$ we have that 
\begin{equation}
\begin{aligned}
    \min_{c\in\C} \|\sin(a\pi\mi\bar{z}) - c \cos(a\pi\mi\bar{z})\|_{L^2(B_{1/2})}^2 &= \min_{c\in\C} \| \psi - c \phi \|_{L^2(B_{1/2})}^2 \\
    &= \|\psi\|_{L^2(B_{1/2})}^2  - \frac{|\langle \psi,\phi\rangle_{L^2(B_{1/2})}|^2}{\|\phi\|_{L^2(B_{1/2})}^2}
\end{aligned}
\end{equation}
Now from looking at the Taylor expansions of $\phi$ and $\psi$, and since complex monomials form a orthonormal system in $L^2(B_{1/2}) $ we find that $|\langle \psi,\phi\rangle_{L^2(B_{1/2})}=0$.
Moreover, again due to the monomials being pairwise orthogonal, we can estimate the norm of $\psi$ as follows.
\begin{equation}
    \begin{aligned}
     \|\psi\|_{L^2(B_{1/2})}^2 &\asymp \|e^{a\pi z}-e^{-a\pi z}\|_{L^2(B_{1/2})}^2 \\
                               &= 2\|e^{a\pi z}\|_{L^2(B_{1/2})}^2 - 2 \Re \left\{ \int_{B_{1/2} }e^{a\pi z - a \pi \bar{z}} \,dA(z)\right\}\\
                               &> 2\|e^{a\pi z}\|_{L^2(B_{1/2})}^2 - \frac\pi2
    \end{aligned}
\end{equation}  
Integration then yields that $\|e^{a\pi z}\|_{L^2(B_{1/2})}^2 = 0.405..\cdot e^{2\pi a}$. Thus we get that 
\begin{equation}
    \min_{\tau\in\torus} \|\G g - \tau \G f\|_{L^2(Q)}^2 \gtrsim e^{-\pi a^2+2\pi a}.
\end{equation}
Putting the two estimates together implies the claim.
\end{proof}
We require a bound for certain integrals.
\begin{lem}\label{lem:estgammalikeint}
 For all $p\in\N$ it holds that 
 \begin{equation}
  \int\limits_0^\infty r^{p+1} \exp\left\{-\frac{\pi}2 r^2+ \frac{\pi}{\sqrt{2}}r\right\}\,dr 
  < 2^{p+2}\cdot \Gamma\left(\frac{p}2+1\right).
 \end{equation}
\end{lem}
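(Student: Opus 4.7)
The plan is to control the linear-in-$r$ term in the exponent by a quadratic term, thereby reducing the integral to an elementary Gaussian moment that evaluates explicitly to a value of Gamma-type.

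First, I would establish the pointwise inequality
\begin{equation}
\frac{r}{\sqrt{2}} \;\le\; \frac{r^2}{4} + \frac{1}{2} \qquad \text{for all } r \ge 0,
\end{equation}
which follows immediately by rewriting the difference as $\bigl(\tfrac{r}{2} - \tfrac{1}{\sqrt{2}}\bigr)^2 \ge 0$. Multiplying by $\pi$ and inserting this into the exponent yields
\begin{equation}
-\frac{\pi}{2} r^2 + \frac{\pi}{\sqrt{2}} r \;\le\; -\frac{\pi}{4} r^2 + \frac{\pi}{2},
\end{equation}
so that
\begin{equation}
\int_0^\infty r^{p+1} \exp\!\Bigl\{-\tfrac{\pi}{2} r^2 + \tfrac{\pi}{\sqrt{2}} r\Bigr\}\,dr \;\le\; e^{\pi/2} \int_0^\infty r^{p+1} e^{-\pi r^2/4}\,dr.
\end{equation}

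Next, I would evaluate the remaining integral by the substitution $u = \pi r^2/4$, i.e.\ $r = 2\sqrt{u/\pi}$, $dr = \pi^{-1/2} u^{-1/2}\,du$, which gives
\begin{equation}
\int_0^\infty r^{p+1} e^{-\pi r^2/4}\,dr \;=\; \frac{2^{p+1}}{\pi^{(p+2)/2}} \int_0^\infty u^{p/2} e^{-u}\,du \;=\; \frac{2^{p+1}}{\pi^{(p+2)/2}} \, \Gamma\!\left(\tfrac{p}{2}+1\right).
\end{equation}
Combining the two displays reduces the claim to the elementary inequality
\begin{equation}
\frac{e^{\pi/2}}{\pi^{(p+2)/2}} \;\le\; 2, \qquad p\in\mathbb{N},
\end{equation}
which holds already for $p=0$ since $e^{\pi/2} < 2\pi$, and is then preserved for larger $p$ because the right-hand side is constant while the left-hand side decays in $p$. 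This is where a small sanity check is needed, but no real obstacle appears: the constant $2^{p+2}$ in the lemma is sufficiently generous to absorb the factor $e^{\pi/2}/\pi^{(p+2)/2}$ uniformly in $p$.

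There is no genuinely hard step here; the only point requiring a moment of thought is choosing a bound for the linear exponent $\pi r/\sqrt{2}$ that is both loose enough to leave a Gaussian with nontrivial decay (so $-\frac{\pi}{2}r^2 + \frac{\pi}{4}r^2 = -\frac{\pi}{4}r^2$ still has negative coefficient) and tight enough that the residual multiplicative constant fits under $2^{p+2}$. The completion-of-squares identity $-\frac{\pi}{2}r^2 + \frac{\pi}{\sqrt{2}}r = -\frac{\pi}{2}(r-1/\sqrt{2})^2 + \pi/4$ is an alternative starting point, but splitting the quadratic as above avoids a shift in the variable of integration and lands directly on a standard Gamma-integral, which is why I favor it.
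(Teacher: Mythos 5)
Your proof is correct, and it takes a genuinely different route from the paper. The paper completes the square exactly, $-\tfrac{\pi}{2}r^2+\tfrac{\pi}{\sqrt{2}}r=-\tfrac{\pi}{2}(r-\tfrac{1}{\sqrt{2}})^2+\tfrac{\pi}{4}$, shifts the variable, expands $(r+\tfrac{1}{\sqrt{2}})^{p+1}$ by the binomial theorem, bounds each moment $\int r^l e^{-\pi r^2/2}\,dr$ by a Gamma value, and then resums the binomial series to land on $4\cdot 2^p\,\Gamma(\tfrac{p}{2}+1)$. You instead sacrifice half the Gaussian decay via the Young-type bound $\tfrac{\pi}{\sqrt{2}}r\le \tfrac{\pi}{4}r^2+\tfrac{\pi}{2}$, which collapses the whole estimate to a single standard moment $\int_0^\infty r^{p+1}e^{-\pi r^2/4}\,dr=\tfrac{2^{p+1}}{\pi^{(p+2)/2}}\Gamma(\tfrac{p}{2}+1)$, leaving only the elementary check $e^{\pi/2}<2\pi\le 2\pi^{(p+2)/2}$ (worst case $p=0$, with the left side decreasing in $p$). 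Your computations all check out, the inequality obtained is strict as required, and the argument is shorter and avoids the binomial bookkeeping entirely; the only thing the paper's exact-shift approach buys is that it does not discard any Gaussian decay, which is irrelevant here since both methods fit under the same generous constant $2^{p+2}$.
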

\begin{proof}
We rewrite the integral 
\begin{equation}
    \begin{aligned}
     \int\limits_0^\infty r^{p+1} \exp\left\{ -\frac{\pi}2 r^2 + \frac{\pi}{\sqrt{2}}r\right\}\,dr 
     &= \int\limits_0^\infty r^{p+r} \exp\left\{-\frac{\pi}2 \left(r-\frac1{\sqrt{2}} \right)^2 + \frac{\pi}4 \right\}\,dr\\
     &= e^{\pi/4} \int\limits_{-1/\sqrt{2}}^\infty \left(r+\frac{1}{\sqrt{2}}\right)^{p+1} e^{-\frac{\pi}2 r^2}\,dr\\
     &= e^{\pi/4} \sum\limits_{l=0}^{p+1} {p+1 \choose l} \left(\frac{1}{\sqrt{2}}\right)^{p+1-l} 
     \int\limits_{-1/\sqrt{2}}^\infty r^l e^{-\frac{\pi}2 r^2}\,dr
    \end{aligned}
\end{equation}

The integral on the right hand side can be bounded by
\begin{equation}
 \int\limits_{-1/\sqrt2}^\infty r^l e^{-\frac\pi2 r^2}\,dr \le 2\int\limits_0^\infty r^l e^{-\frac\pi2 r^2}\,dr = \frac{2 \Gamma \left(\frac{l+3}2 \right)}{(l+1) \cdot \left(\frac\pi2\right)^{\frac{l+1}2}} =  \frac{ \Gamma \left(\frac{l+1}2 \right)}{ \left(\frac\pi2\right)^{\frac{l+1}2}}
\end{equation}
With this we obtain that 
\begin{equation}
\begin{aligned}
     \int\limits_0^\infty r^{p+1} \exp\left\{ -\frac{\pi}2 r^2 + \frac{\pi}{\sqrt{2}}r\right\}\,dr  
     &\le e^{\pi/4} \sum\limits_{l=0}^{p+1} {p+1 \choose l} \left(\frac{1}{\sqrt{2}}\right)^{p+1-l} \frac{ \Gamma \left(\frac{l+1}2 \right)}{ \left(\frac\pi2\right)^{\frac{l+1}2}}\\
     &\le e^{\pi/4} \Gamma\left(\frac{p}2 +1\right) \cdot \left(\frac{1}{\sqrt{2}} + \sqrt{\frac{2}{\pi}}\right)^{p+1} \cdot \sqrt{\frac{2}\pi}\\
     &\le 4 \cdot 2^p \cdot \Gamma\left(\frac{p}2+1\right),
\end{aligned}
\end{equation}
which yields the desired bound.
\end{proof}

We need to control Legendre polynomials from below on the certain sets.
\begin{lem}\label{lem:estlegendrepol}
 Suppose that $a>1$ and $b>0$. Let $P_N$ denote the $N$-th degree Legendre polynomial.
 Then it holds that 
 \begin{equation}
     \inf_{z\in \partial R_{a,b} \cup (a,+\infty)} |P_N(z)|  \ge \min\{a-1,b\}^N.
 \end{equation}
 where $R_{a,b}=[-a,a]\times[-b,b]$.
\end{lem}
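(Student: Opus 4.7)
The plan is to exploit the fact that all roots of the Legendre polynomial $P_N$ lie in the open interval $(-1,1)$. This allows one to factor $P_N$ and estimate each factor geometrically by a uniform lower bound on the distance from the set $\partial R_{a,b} \cup (a,+\infty)$ to the interval $[-1,1]$.

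First I would write $P_N$ in factored form as
\begin{equation}
P_N(z) = c_N \prod_{k=1}^N (z-x_k), \qquad x_k \in (-1,1),
\end{equation}
with leading coefficient $c_N = \binom{2N}{N} 2^{-N}$. A quick induction (or direct computation) shows $c_N \ge 1$ for all $N \in \mathbb{N}$, so this factor will only help us and can be thrown away when estimating from below.

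Next, I would establish the key geometric estimate
\begin{equation}
\mathrm{dist}\bigl(z,[-1,1]\bigr) \ge \min\{a-1,b\} \qquad \text{for all } z \in \partial R_{a,b} \cup (a,+\infty).
\end{equation}
This is an elementary case distinction: writing $z = x + \mathrm{i} y$, on the vertical sides of $\partial R_{a,b}$ we have $|x|=a$, so for any $t \in [-1,1]$, $|z-t| \ge |x-t| \ge a-1$; on the horizontal sides $|y|=b$, and $|z-t| \ge |y| = b$; and for $z \in (a,+\infty)$, $|z-t| \ge a-1$. Since each zero $x_k$ lies in $(-1,1) \subset [-1,1]$, applying this with $t = x_k$ yields $|z - x_k| \ge \min\{a-1,b\}$ for every $k$.

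Finally, combining the factorization with the geometric estimate gives
\begin{equation}
|P_N(z)| = c_N \prod_{k=1}^N |z-x_k| \ge 1 \cdot \min\{a-1,b\}^N,
\end{equation}
uniformly in $z \in \partial R_{a,b} \cup (a,+\infty)$, which is the claimed bound. There is no real obstacle here; the only point requiring a moment's care is verifying $c_N \ge 1$, but this follows immediately from the recursion $c_{N+1}/c_N = (2N+1)/(N+1) \ge 1$.
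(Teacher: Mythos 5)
Your proof is correct and follows essentially the same route as the paper's: both arguments rest on the facts that all zeros of $P_N$ lie in $(-1,1)$ and that every point of $\partial R_{a,b}\cup(a,+\infty)$ has distance at least $\min\{a-1,b\}$ from $[-1,1]$. The only cosmetic difference is the bookkeeping of the normalizing constant — you use the leading coefficient $c_N=\binom{2N}{N}2^{-N}\ge 1$ with linear factors, while the paper pairs the symmetric roots into quadratic factors normalized by $P_N(1)=1$ — and both choices are valid.
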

\begin{proof}
 First, we observe that $P_N$ is a polynomial of degree $N$, which has all its roots in the interval $(-1,1)$, the roots are symmetrically positioned and $P_N(1)=1.$
 Thus, we have that 
 \begin{equation}
     P_N(z)= z^\varepsilon \prod_{j=1}^{\lfloor \frac{N}2 \rfloor} \frac{z^2-t_j^2}{1-t_j^2},
 \end{equation}
 where $t_j$ are the positive roots of $P_N$, and where $\varepsilon\in\{0,1\}$ depending on whether $N$ is even or odd. 
 Moreover, since $P_N$ has all its roots in $(-1,1)$, we have that $x\mapsto |P_n(x)|$ is monotonically increasing on $[a,+\infty)$.
 Thus it suffices to show that 
 \begin{equation}\label{eq:lowerboundPNrectangle}
     \inf_{z\in \partial R_{a,b}} |P_N(z)|  \ge \min\{a-1,b\}^N.
 \end{equation}
 For $z\in\partial R_{a,b}$ and arbitrary $t\in(0,1)$ we can estimate that 
 \begin{equation}
     \left| \frac{z^2-t^2}{1-t^2}\right| = \frac{|z-t| \cdot |z+t|}{1-t^2} \ge \min\{a-1,b\}^2.
 \end{equation}
 Since $|z|\ge \min\{a-1,b\}$ for $z\in\partial R_{a,b}$ we get \eqref{eq:lowerboundPNrectangle}.
\end{proof}

\end{document}